\DeclareRobustCommand{\greektext}{%
  \fontencoding{LGR}\selectfont\def\encodingdefault{LGR}}
\DeclareRobustCommand{\textgreek}[1]{\leavevmode{\greektext #1}}
\numberwithin{equation}{section}
\theoremstyle{plain}
\newtheorem{thm}{\protect\theoremname}[section]
\theoremstyle{definition}
\newtheorem{defn}[thm]{\protect\definitionname}
\theoremstyle{plain}
\newtheorem{lem}[thm]{\protect\lemmaname}
\theoremstyle{plain}
\newtheorem{prop}[thm]{\protect\propositionname}
\theoremstyle{remark}
\newtheorem*{rem*}{\protect\remarkname}
\theoremstyle{plain}
\newtheorem{cor}[thm]{\protect\corollaryname}
\theoremstyle{remark}
\newtheorem{rem}[thm]{\protect\remarkname}
\theoremstyle{plain}
\newtheorem{assumption}[thm]{\protect\assumptionname}
\theoremstyle{plain}
\newtheorem*{prop*}{\protect\propositionname}
\theoremstyle{definition}
\newtheorem{example}[thm]{\protect\examplename}
\theoremstyle{plain}
\newtheorem*{thm*}{\protect\theoremname}
\theoremstyle{remark}
\newtheorem*{acknowledgement*}{\protect\acknowledgementname}
\providecommand{\acknowledgementname}{Acknowledgement}
\providecommand{\assumptionname}{Assumption}
\providecommand{\corollaryname}{Corollary}
\providecommand{\definitionname}{Definition}
\providecommand{\examplename}{Example}
\providecommand{\lemmaname}{Lemma}
\providecommand{\propositionname}{Proposition}
\providecommand{\remarkname}{Remark}
\providecommand{\theoremname}{Theorem}
\begin{document}
\title[Embedding Signature-Changing Manifolds]{Embedding Signature-Changing Manifolds: A Braneworld and Kaluza-Klein Perspective}
\author{N. E. Rieger}
\begin{abstract}
We investigate a class of semi-Riemannian manifolds characterized
by smooth metric signature changes with a transverse radical. This
class includes spacetimes relevant to cosmological models such as
the Hartle-Hawking \textquotedblleft no boundary\textquotedblright{}
proposal~\cite{Hartle Hawking - Wave function of the Universe},
where a Riemannian manifold transitions smoothly into a Lorentzian
spacetime without boundaries or singularities. For this class, we
prove the existence of global isometric embeddings into higher-dimensional
pseudo-Euclidean spaces. We then strengthen this result by demonstrating
that a specific type of global isometric embedding, which we term
an $\mathcal{H}$--global embedding, also exists into both Minkowski
space and Misner space. For the canonical $n$--dimensional signature-changing
model, we explicitly construct a full global isometric embedding into
$(n+1)$--dimensional Minkowski and Misner spaces, a significantly
stronger result than an $\mathcal{H}$--global embedding for this
specific case.

This embedding framework provides new geometric tools for studying
signature change and braneworlds through the geometry of submanifolds
embedded in a bulk, thus presenting a mathematically well-defined
approach to these phenomena.
\end{abstract}

\keywords{causality, Kaluza-Klein, singular semi-Riemannian geometry, Lorentzian geometry, signature change, isometric embedding, singular metric, braneworlds, Misner space}

\maketitle
\address{Department of Mathematics, Yale University, USA}

\email{n.rieger@yale.edu}

\section{Introduction and Statement of Results}

In this article, we investigate semi-Riemannian manifolds $(\tilde{M},\tilde{g})$
where the metric smoothly changes signature. Our focus is on scenarios
where an initially Riemannian manifold undergoes a continuous transition,
ultimately evolving into a Lorentzian universe without boundaries
or singularities. In metrics with changing signature, the transition
typically involves either an eigenvalue of the metric passing through
zero, leading to metric degeneracy, or a jump from a positive to a
negative value, causing metric discontinuity~\cite{Ellis - Change of signature in classical relativity}.
We primarily adopt the continuous approach, considering cases with
a transverse radical, where the metric $\tilde{g}$ defines a smooth
$(0,2)$--tensor field that becomes degenerate on a subset $\mathcal{H}\subset\tilde{M}$.
This subset $\mathcal{H}$ represents a smoothly embedded hypersurface
in $\tilde{M}$~\cite{Hasse + Rieger-Transformation,Hasse + Rieger-Loops}.\linebreak{}
~\linebreak{}
This specific class, characterized by a transverse radical, represents
a natural and preferred choice for cosmological models, especially
since cosmological applications typically necessitate a spacelike
surface for signature change. In common parlance, in coordinates adapted
to the radical (referred to as radical-adapted Gauss-like coordinates
in~\cite{Hasse + Rieger-Transformation}; see also Definition~\ref{def:Radical-Adapted-Gauss-Like-Coordinates}
in Section~\ref{subsec:Main-results}), the signature-type change
occurs at a single instant in time. Interestingly, this restriction
is also consistent with the stringent constraints on permissible signature
change possibilities imposed by the brane scenario~\cite{Mars et al - Signature Change on the Brane}.
This family of so-called \textit{transverse type-changing singular
semi-Riemannian manifolds} with a transverse radical,\footnote{A \textit{singular} semi-Riemannian manifold has metric tensor (of
arbitrary signature) which is allowed to become degenerate. Furthermore,
we call the metric $g$ a codimension-{}-$1$ \textit{transverse type-changing}
metric if $d(\textrm{det}([\tilde{g}_{\mu\nu}]))_{q}\neq0$ for any
$q\in\mathcal{H}$ and any local coordinate system around $q$.} with $\dim(\tilde{M})\geq2$, can be shown to be \textit{LC (light-cone)-regular}.
This is a crucial notion, verbatim from~\cite{Bernig}:
\begin{defn}
\label{def:LC-regular}A signature-type changing metric $\tilde{g}$
on a smooth manifold $\tilde{M}$ is said to be \textit{LC (light-cone)-regular}
if $0$ is a regular value of $\tilde{g}\in C^{\infty}(T\tilde{M}\setminus\underline{0})$.

~
\end{defn}

\subsection{Preliminary Results\label{subsec:Preliminary-results}}

With Definition~\ref{def:LC-regular}, we can establish the following:
\begin{lem}
\label{lem:LC-regular}Let $(\tilde{M},\tilde{g})$ be an $m$--dimensional
transverse type-changing manifold with a transverse radical. Then
$(\tilde{M},\tilde{g})$ is LC (light-cone)-regular. 
\end{lem}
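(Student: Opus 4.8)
The plan is to work directly from the definition: regard $\tilde{g}$ as the smooth fibrewise-quadratic function $F\colon T\tilde{M}\setminus\underline{0}\to\mathbb{R}$, $F(p,v)=\tilde{g}_{p}(v,v)$, and verify that its differential is nonzero (equivalently, surjective onto $\mathbb{R}$) at every nonzero null vector. Fixing local coordinates $(x^{\mu})$ on $\tilde{M}$ and the induced bundle coordinates $(x^{\mu},v^{\nu})$ on $T\tilde{M}$, we have $F(x,v)=\tilde{g}_{\mu\nu}(x)\,v^{\mu}v^{\nu}$, so the two blocks of $dF$ read
\[
\frac{\partial F}{\partial v^{\alpha}}=2\,\tilde{g}_{\alpha\nu}(x)\,v^{\nu},\qquad \frac{\partial F}{\partial x^{\alpha}}=\bigl(\partial_{\alpha}\tilde{g}_{\mu\nu}\bigr)(x)\,v^{\mu}v^{\nu}.
\]
It therefore suffices to show that at each $(p,v)$ with $v\neq0$ and $F(p,v)=0$ at least one of these quantities is nonzero.

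I would split into two cases according to whether $v$ lies in the radical $\mathrm{Rad}_{p}=\ker\tilde{g}_{p}$. If $v\notin\mathrm{Rad}_{p}$ — which in particular holds whenever $p\notin\mathcal{H}$, where $\tilde{g}_{p}$ is nondegenerate — then there is a $w$ with $\tilde{g}_{p}(v,w)\neq0$; taking $w=\partial_{x^{\alpha}}$ for a suitable $\alpha$ yields $\partial F/\partial v^{\alpha}=2\,\tilde{g}_{\alpha\nu}v^{\nu}\neq0$, and we are done. The only remaining, and genuinely delicate, situation is that of a null vector $v$ spanning the radical at a degeneracy point $p\in\mathcal{H}$ (such a $v$ is automatically null, since $\tilde{g}_{p}(v,\cdot)\equiv0$). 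Here every fibre-derivative $\partial F/\partial v^{\alpha}$ vanishes, so the entire burden shifts to the base-derivatives $\partial F/\partial x^{\alpha}$.

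The crux is to tie this base-derivative to the type-changing hypothesis, and for this I would invoke Jacobi's formula in its adjugate form, which is valid even where the metric is singular:
\[
\partial_{\alpha}\det[\tilde{g}_{\mu\nu}]\big|_{p}=\mathrm{tr}\!\bigl(\mathrm{adj}(\tilde{g}_{p})\,\partial_{\alpha}\tilde{g}_{p}\bigr)=\mathrm{adj}(\tilde{g}_{p})^{\mu\nu}\,\partial_{\alpha}\tilde{g}_{\mu\nu}.
\]
First I would note that the transverse type-changing condition $d(\det[\tilde{g}_{\mu\nu}])_{p}\neq0$ forces $\tilde{g}_{p}$ to have rank exactly $m-1$: rank $\leq m-2$ would kill every $(m-1)\times(m-1)$ minor, hence $\mathrm{adj}(\tilde{g}_{p})=0$ and, by the identity above, $d(\det)_{p}=0$, a contradiction. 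Thus the radical is one-dimensional and spanned by $v$; moreover, since $\tilde{g}_{p}\,\mathrm{adj}(\tilde{g}_{p})=\det(\tilde{g}_{p})\,I=0$, the columns of $\mathrm{adj}(\tilde{g}_{p})$ lie in $\ker\tilde{g}_{p}=\mathrm{span}(v)$, and symmetry of $\mathrm{adj}(\tilde{g}_{p})$ then gives $\mathrm{adj}(\tilde{g}_{p})^{\mu\nu}=\lambda\,v^{\mu}v^{\nu}$ with $\lambda\neq0$ (nonvanishing precisely because the rank is $m-1$). Substituting back produces
\[
\partial_{\alpha}\det[\tilde{g}_{\mu\nu}]\big|_{p}=\lambda\,\bigl(\partial_{\alpha}\tilde{g}_{\mu\nu}\bigr)\,v^{\mu}v^{\nu}=\lambda\,\frac{\partial F}{\partial x^{\alpha}}\Big|_{(p,v)},
\]
so along the radical direction the base-gradient of $F$ equals, up to the nonzero factor $\lambda$, the gradient of $\det[\tilde{g}_{\mu\nu}]$. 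Since $d(\det[\tilde{g}_{\mu\nu}])_{p}\neq0$ by hypothesis, some $\partial_{\alpha}\det$ is nonzero, hence so is the matching $\partial F/\partial x^{\alpha}$, and $dF_{(p,v)}\neq0$.

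The main obstacle is exactly this rank-one adjugate identity together with the verification that $\lambda\neq0$: this single step simultaneously pins down the one-dimensional radical and converts the abstract regular-value requirement into the concrete transverse type-changing condition on $\det$. (Note that the transverse type-changing property is the operative hypothesis here; the transverse-radical feature characterizing the class is not itself needed for LC-regularity.) A secondary point deserving care is coordinate-independence: $\det[\tilde{g}_{\mu\nu}]$ rescales by the squared Jacobian under changes of chart, but that factor never vanishes, so the condition $d(\det)_{p}\neq0$ along the zero set $\mathcal{H}$ — and hence the whole argument — is well defined, consistent with the ``any local coordinate system'' clause in the footnote.
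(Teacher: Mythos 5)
Your proof is correct, and it takes a genuinely different route from the paper's. The paper argues by contraposition in radical-adapted Gauss-like coordinates: writing $[\tilde{g}_{p}]$ as a block matrix $\operatorname{diag}(\lambda_{1},\tilde{G})$ with $\tilde{G}$ positive definite, it shows that at a critical null point the fibre derivatives force $\bar{v}\in\operatorname{Rad}_{\bar{p}}$ (hence $\bar{p}\in\mathcal{H}$, and the null condition plus positive definiteness of the spatial block force $\hat{v}=0$), and then the vanishing base derivative yields $D_{p}\lambda_{1}(\bar{p})=0$, whence $D_{p}(\det[\tilde{g}])(\bar{p})=D_{p}\lambda_{1}\cdot\det\tilde{G}+\lambda_{1}\cdot D_{p}(\det\tilde{G})=0$, contradicting the transverse type-changing hypothesis. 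You reach the same case split (radical versus non-radical null vectors, with the non-radical case killed by a nonzero fibre derivative in both proofs), but you replace the coordinate computation by the invariant adjugate form of Jacobi's formula: $d(\det)_{p}\neq0$ forces $\operatorname{rank}\tilde{g}_{p}=m-1$, hence $\operatorname{adj}(\tilde{g}_{p})^{\mu\nu}=\lambda\,v^{\mu}v^{\nu}$ with $\lambda\neq0$, and so $\partial_{\alpha}\det[\tilde{g}_{\mu\nu}]\big|_{p}=\lambda\,\partial F/\partial x^{\alpha}\big|_{(p,v)}$. What this buys: your argument is coordinate-free, does not invoke the existence of radical-adapted Gauss-like coordinates (a nontrivial input, imported from the transformation theorem the paper cites), and does not use positive definiteness of the spatial block; consequently — as you correctly observe — it proves LC-regularity for arbitrary transverse type-changing metrics, with the transverse-radical hypothesis playing no role. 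The rank-one adjugate identity also delivers the one-dimensionality of the radical as a by-product, something the paper's block form tacitly presupposes. What the paper's version buys is concreteness: the computation is carried out in exactly the coordinates used throughout the rest of the article, making the link to the canonical form $-t(dt)^{2}+g_{ij}dx^{i}dx^{j}$ explicit.
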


Since the 1920s, when Kaluza and Klein first proposed that spacetime
might extend beyond the familiar four dimensions, the concept of extra
dimensions has captivated theoretical physicists. In the Kaluza-Klein
framework, these additional dimensions are compactified - typically
envisioned as circular with a radius on the order of the Planck length,
approximately $l_{Pl}\sim10^{-35}$ meters~\cite{Yang - braneworld model in a massive gravityey-4}.
This compactification approach remains a cornerstone of higher-dimensional
unification theories~\cite{Overduin - Kaluza-klein gravity}. More
recently, inspired by string theory, the braneworld scenario has emerged
as an alternative mechanism for concealing extra dimensions~\cite{Arkani-Divali - The Hierarchy Problem and New Dimensions at a Millimeter. ,Maartens + Koyama - Brane-World Gravity,Randall  - An Alternative to Compactification}.
In the 1980's, Rubakov and Shaposhnikov~\cite{Rubakov + Shaposhnikov - Do We Live Inside a Domain Wall}
proposed that our universe is a four-dimensional brane embedded within
a higher-dimensional spacetime. This idea laid the foundation for
the development of later extra-dimensional models and braneworld scenarios.
In the latter, the universe is modeled as a brane, typically a $1+n$
dimensional hypersurface, embedded in a higher-dimensional bulk governed
by the higher-dimensional Einstein field equations.\footnote{Cases with codimension $1$ are more suitable and are most commonly
studied, whereas traditional analytical methods have proven less effective
for cases with codimension $2$ or higher~\cite{Carter - Essentials of Classical Brane Dynamics}.} In contrast to other higher-dimensional theories, the extra dimensions
can be large or even infinite~\cite{Betounes}. A braneworld can
thus be viewed as a spacetime (locally) embedded within a multidimensional
bulk, where the geometry of the embedding is expected to exhibit quantum
fluctuations along the extra dimensions~\cite{Betounes,Maia Kaluza-Klein,Maia and Monte,Monte. Mathematical Support to Braneworld Theory}.

~\linebreak{}
In modern formulations of Kaluza-Klein theory, a higher-dimensional
manifold $(E,g)$ unifies a lower-dimensional spacetime $(M,\tilde{g})$
and a gauge field by utilizing an isometric embedding $f:M\hookrightarrow E$,
where $M$ is a submanifold of the total space $E$~\cite{Monte. Mathematical Support to Braneworld Theory}.
Under this framework, a $4$--dimensional Lorentzian spacetime can
be seen as isometrically embedded in a $5$--principal fiber bundle
over $M$, with a one-dimensional compact fiber. This geometric framework
suggests a unified origin for gravity and other fundamental forces.

~

Together with the fact that the signature-type changing manifolds
under consideration are light-cone (LC) regular, this leads to the
following proposition, which establishes a geometric framework for
extending the braneworld model: We establish the existence of an isometric
embedding into a higher-dimensional pseudo-Euclidean manifold with
signature $(q,p)$, thereby framing the study of signature change
and braneworlds in terms of the geometry of submanifolds embedded
into a bulk of arbitrary dimension, which is a mathematically sound
and well-defined problem. 
\begin{prop}
\label{prop:pseudo-Euclidean embedding}Let $(\tilde{M},\tilde{g})$
be an $(1+n)$--dimensional transverse type-changing manifold with
a transverse radical. Then, there exists an isometric embedding of
$(\tilde{M},\tilde{g})$ in a $(1+n+d)$--dimensional, non-singular
pseudo-Euclidean manifold \textup{$(M,g)$}.
\end{prop}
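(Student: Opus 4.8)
The plan is to reduce the problem to the classical Nash isometric embedding theorem for \emph{Riemannian} metrics by writing the degenerate, signature-changing tensor $\tilde{g}$ as a difference of two genuine positive-definite metrics. This completely sidesteps the degeneracy on $\mathcal{H}$: although $\tilde{g}$ drops rank along the hypersurface, the two auxiliary metrics into which we decompose it will be nondegenerate everywhere, so the rank loss is merely distributed between a positive and a negative block of the target signature.

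First I would fix an auxiliary Riemannian metric $h$ on $\tilde{M}$, which exists by paracompactness and a partition of unity. Because $\tilde{g}$ is a smooth symmetric $(0,2)$--tensor, one can choose a smooth positive function $C\in C^{\infty}(\tilde{M})$, growing fast enough near infinity, so that both
\[
g_{+}:=\tfrac{1}{2}\,(C\,h+\tilde{g}),\qquad g_{-}:=\tfrac{1}{2}\,(C\,h-\tilde{g})
\]
are positive-definite. The key point is that at a radical point with null direction $v\in T_{q}\tilde{M}$ one still has $g_{\pm}(v,v)=\tfrac{1}{2}C\,h(v,v)>0$, so the degeneracy of $\tilde{g}$ never propagates to $g_{\pm}$. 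By construction $\tilde{g}=g_{+}-g_{-}$.

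Next I would invoke the Nash $C^{\infty}$ isometric embedding theorem for each of the Riemannian manifolds $(\tilde{M},g_{+})$ and $(\tilde{M},g_{-})$, obtaining isometric embeddings $F_{+}:\tilde{M}\hookrightarrow(\mathbb{R}^{N_{+}},\delta)$ and $F_{-}:\tilde{M}\hookrightarrow(\mathbb{R}^{N_{-}},\delta)$ into Euclidean spaces. Assembling $F:=(F_{+},F_{-}):\tilde{M}\to\mathbb{R}^{N_{+}}\oplus\mathbb{R}^{N_{-}}$ and equipping the target with the pseudo-Euclidean metric $g$ that is $+\delta$ on the first factor and $-\delta$ on the second, a direct computation gives, for all $X,Y\in T_{q}\tilde{M}$,
\[
g\big(dF(X),dF(Y)\big)=g_{+}(X,Y)-g_{-}(X,Y)=\tilde{g}(X,Y),
\]
so $F^{\ast}g=\tilde{g}$. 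Since $F_{+}$ is already an embedding, so is $F$ (injectivity and properness of $F$ follow from those of $F_{+}=\pi_{1}\circ F$); thus $F$ is the desired global isometric embedding into the flat pseudo-Euclidean space $(\mathbb{R}^{N_{+}+N_{-}},g)$ of signature $(N_{-},N_{+})$, and one sets $d:=N_{+}+N_{-}-(1+n)$.

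I expect the main difficulty to be bookkeeping rather than analysis: verifying that $C$ can be chosen globally on a possibly noncompact $\tilde{M}$ so that $g_{\pm}$ stay positive-definite, and confirming that the noncompact form of Nash's theorem indeed yields embeddings (not merely immersions), with the codimension absorbed into $d$. It is worth stressing that in this approach the degeneracy along $\mathcal{H}$ and even LC-regularity (Lemma~\ref{lem:LC-regular}) play no role in the mere \emph{existence} argument; they become essential only when one wishes to control the target signature or to sharpen the embedding, as in the stronger $\mathcal{H}$--global statements. A more direct route through a pseudo-Riemannian Nash-type theorem would require $\tilde{g}$ to be nondegenerate, which fails precisely on $\mathcal{H}$, so it is the difference-of-metrics reduction that renders the singular case tractable.
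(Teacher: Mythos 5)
Your proposal is correct, but it takes a genuinely different route from the paper. The paper first proves that a transverse type-changing metric with transverse radical is LC-regular (Lemma~\ref{lem:Lightcone regular}), then invokes Proposition 4.9 of Bernig--Faifman--Solanes to isometrically embed $(\tilde{M},\tilde{g})$ into a non-singular pseudo-Riemannian manifold $(M',g')$, and finally applies Clarke's pseudo-Riemannian Nash-type theorem to embed $(M',g')$ into some $\mathbb{R}^{Q,P}$, composing the two embeddings. You instead split $\tilde{g}=g_{+}-g_{-}$ with $g_{\pm}=\tfrac12(Ch\pm\tilde{g})$ positive definite, apply the classical Riemannian Nash theorem twice, and take the product map into $\mathbb{R}^{N_{+}}\oplus\mathbb{R}^{N_{-}}$ with split-signature flat metric; the pullback computation $F^{*}g=g_{+}-g_{-}=\tilde{g}$ is correct, and your observation that $F$ is a topological embedding because $F_{+}=\pi_{1}\circ F$ is one (so $F^{-1}=F_{+}^{-1}\circ\pi_{1}$ is continuous on the image) is the right argument --- though you should drop the word ``properness,'' since Nash's noncompact theorem does not guarantee closed embeddings and properness is not needed for the statement. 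What your approach buys is economy and generality: it is self-contained, uses only the classical Nash theorem, and, as you correctly note, never uses LC-regularity, transversality, or even the radical hypothesis --- it works verbatim for \emph{any} smooth symmetric $(0,2)$-tensor field, which shows the proposition's hypotheses are inessential for bare existence. What the paper's route buys is structural control and reusability: the LC-regularity lemma is established independently and ties the embedding to the light-cone geometry of the signature change, and the intermediate embedding into a non-singular pseudo-Riemannian $(M',g')$ of controlled signature $(q,p)$ gives some handle on the number of timelike directions of the bulk --- a point the paper cares about physically (see Remark~\ref{rem:geometric interpretation braneworlds}) --- whereas your construction makes all $N_{-}$ directions of the second factor timelike with no control over $q$. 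Both proofs are valid; yours is the more elementary existence argument, the paper's is the one that feeds into its subsequent, sharper $\mathcal{H}$--global results.
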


This existence result enables the rigorous mathematical study of extended
braneworld scenarios where the $(1+n)$--dimensional brane undergoes
signature change, providing a concrete embedding framework for such
cosmological models. In this context, braneworlds are understood more
broadly, as the higher-dimensional bulk is not assumed to be Lorentzian.
Instead, it is treated as a flat Einstein manifold governed by the
higher-dimensional Einstein field equations.\footnote{Although, in the context of physical general relativity, solutions
to Einstein's field equations must lie on Lorentzian manifolds rather
than general semi-Riemannian manifolds, one can still consider Einstein's
field equations on semi-Riemannian manifolds of other signatures from
a purely mathematical standpoint. Such spaces are known as Einstein
manifolds.} The brane is an isometrically embedded manifold that undergoes a
signature-type change and exhibits quantum fluctuations--manifested
as pseudo-timelike loops, as discussed in~\cite{Hasse + Rieger-Loops}---in
regions sufficiently close to the hypersurface where the signature
transition occurs. Recent studies~\cite{3 + 2 cosmology: Unifying FRW metrics in the bulk}
have investigated scenarios in which a spatially flat FRW metric is
embedded in a bulk with a $(2,3)$ signature (i.e., two timelike dimensions),
yielding a unique bulk solution known as the \textquotedblleft M-metric\textquotedblright .

~

Fundamentally, nothing precludes the existence of perfectly regular
(single) branes that undergo a change in signature, from Riemannian
to Lorentzian, while remaining smooth and regular throughout. Both
the brane and the bulk can be entirely regular, even though the signature
change may appear as a dramatic occurrence from the internal perspective
of the brane~\cite{Mars et al - Signature Change on the Brane}. 

\subsection{Main Results\label{subsec:Main-results}}

These observations and preliminary results pave the way for, and culminate
in, the main findings described below. We first introduce the necessary
notions and definitions.
\begin{defn}
\label{def:Radical-Adapted-Gauss-Like-Coordinates} Let $(\tilde{M},g)$
be an $n$--dimensional singular semi-Riemannian manifold, and let
$\mathcal{H}:=\{q\in M\!\!:g\!\!\mid_{q} is\;degenerate\}$ denote
its surface of signature change. The manifold $(\tilde{M},\tilde{g})$
is defined as a transverse, signature-type changing manifold with
a transverse radical if and only if for every point $q\in\mathcal{H}$,
there exists a neighborhood $U(q)\subset\tilde{M}$ and smooth coordinates
$(t,x^{1},\ldots,x^{n-1})$ on $U(q)$ such that the metric tensor
$\tilde{g}$ takes the canonical form: 

\[
\tilde{g}=-t(dt)^{2}+g_{ij}(t,x^{1},\ldots,x^{n-1})dx^{i}dx^{j},
\]
where the indices $i,j$ range from $1$ to $n-1$. These coordinates
$(t,x^{1},\ldots,x^{n-1})$ are called radical-adapted Gauss-like
coordinates.
\end{defn}

The local radical-adapted Gauss-like coordinates allow for the construction
of a larger, globally consistent neighborhood. This leads to the following
definition.
\begin{defn}
\label{def:H-Global}An $\mathcal{H}$--global neighborhood $U$
of $\mathcal{H}$ is an open set in $\tilde{M}$ defined as the union
of such radical-adapted Gauss-like coordinate neighborhoods for all
points on $\mathcal{H}$: 
\[
U=\bigcup_{q\in\mathcal{H}}U(q).
\]
Within this $\mathcal{H}$--global neighborhood $U$, the function
$\mathfrak{h}(t,\mathbf{\hat{x}}):=t$, with $\mathbf{\hat{x}}=(x^{1},\ldots,x^{n-1})$,
serves as an unique \textit{absolute time function}. This function imposes
a natural time direction on $U$, where an increasing value of $\mathfrak{h}$
corresponds to the future.
\end{defn}

\begin{rem*}
It's important to note that while the function $\mathfrak{h}_{q}(t,\mathbf{\hat{x}}):=t$
for every point $q\in\mathcal{H}$ is defined using the radical-adapted
Gauss-like coordinate $t$ within a specific coordinate patch $U(q)$,
the numerical value of this $t$--coordinate itself can vary across
different patches $U(q')$ for $q'\in\mathcal{H}$. However, the function
value $\mathfrak{h}_{q}(t,\mathbf{\hat{x}})$ for any given point
on $\mathcal{H}$ remains consistent regardless of the specific coordinate
patch used to express it. This means that while the coordinate representation
of  \textquotedblleft time\textquotedblright{} might differ locally,
the physical quantity represented by $\mathfrak{h}_{q}$ is globally
well-defined on $\mathcal{H}$. 
\end{rem*}
\begin{cor}
\label{cor: Time Function Foliation}Let $(\tilde{M},\tilde{g})$
be an $n$--dimensional manifold admitting an $\mathcal{H}$--global
neighborhood $U=\bigcup_{q\in\mathcal{H}}U(q)$. Within $U$, we can
single out a time coordinate defining the real-valued, strictly increasing,
smooth absolute time function $\mathfrak{h}(t,\mathbf{\hat{x}}):=t$,
such that the restriction $(\tilde{M},\tilde{g})\mid_{U}$ admits
a decomposition into spacelike hypersurfaces $\{(U_{\varphi})_{t_{i}}\}$
of dimension $n-1$, given as the level sets $\{(U_{\varphi})_{t_{i}}\}=\mathfrak{h}^{-1}(t_{i})=\{p\in U\mid\mathfrak{h}(p)=t_{i}\}$,
$t_{i}\in\mathbb{R}_{>0}$. This family $\{(U_{\varphi})_{t}\}_{t\in\mathbb{R}_{>0}}$
forms a foliation of \  $\bigcup_{t\in\mathbb{R}_{>0}}(U_{\varphi})_{t}$
into disjoint $(n-1)$--dimensional Riemannian manifolds. For each
slice $\{(U_{\varphi})_{t}\}$, the restriction $(\tilde{g}_{0})_{t_{i}}$
of the metric $\tilde{g}_{0}$ endows the pair $((U_{\varphi})_{t_{i}},(\tilde{g}_{0})_{t_{i}})$
with the structure of a Riemannian manifold.
\end{cor}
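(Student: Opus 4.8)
The plan is to read off all the required structure directly from the canonical form of the metric in radical-adapted Gauss-like coordinates and then invoke the standard submersion-to-foliation machinery. First I would fix the $\mathcal{H}$--global neighborhood $U=\bigcup_{q\in\mathcal{H}}U(q)$ and recall from Definition~\ref{def:H-Global} that $\mathfrak{h}(t,\mathbf{\hat{x}})=t$ is a globally well-defined smooth function on $U$; the compatibility across overlapping patches $U(q)\cap U(q')$ is exactly the content of the preceding Remark, so I would treat that as established. Since $t$ is one of the coordinate functions, its differential $d\mathfrak{h}=dt$ never vanishes, so $\mathfrak{h}\colon U\to\mathbb{R}$ is a submersion and every value is regular. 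By the regular value theorem each level set $(U_{\varphi})_{t_{i}}=\mathfrak{h}^{-1}(t_{i})$ is then a smooth embedded hypersurface of dimension $n-1$, and by the standard fact that the fibers of a submersion constitute a foliation, the family $\{\mathfrak{h}^{-1}(t)\}$ is a codimension-one foliation of $U$. Strict monotonicity of $\mathfrak{h}$ along the distinguished $t$--direction is immediate from $\mathfrak{h}=t$.

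The second step is to compute the induced metric and pin down its signature. On a slice $\{t=t_{i}\}$ the tangent space is spanned by $\partial_{x^{1}},\ldots,\partial_{x^{n-1}}$, and since $dt$ annihilates these vectors, the pullback of $\tilde{g}=-t\,(dt)^{2}+g_{ij}\,dx^{i}dx^{j}$ to the slice is simply $(\tilde{g}_{0})_{t_{i}}=g_{ij}(t_{i},\mathbf{\hat{x}})\,dx^{i}dx^{j}$. I would then invoke that $g_{ij}$ is positive definite: this is built into the transverse-radical, signature-changing structure of Definition~\ref{def:Radical-Adapted-Gauss-Like-Coordinates}, since the radical at $\mathcal{H}$ is spanned by $\partial_{t}$ (transverse to $\mathcal{H}=\{t=0\}$) and the complementary spatial block must be Riemannian for the change to be Riemannian-to-Lorentzian; being positive definite at $t=0$ and continuous, it remains positive definite throughout a neighborhood. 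Hence each $((U_{\varphi})_{t_{i}},(\tilde{g}_{0})_{t_{i}})$ is a genuine Riemannian manifold, and the slices are disjoint by construction.

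Finally I would verify the spacelike assertion for $t_{i}>0$. On the region $t>0$ the coefficient $-t$ is negative, so $\partial_{t}$ is $\tilde{g}$--timelike while every vector tangent to a slice has $\tilde{g}$--norm equal to its positive-definite $g_{ij}$--norm. Equivalently, because the metric is block diagonal the inverse satisfies $\tilde{g}^{tt}=-1/t$, whence the gradient obeys $\tilde{g}(\nabla\mathfrak{h},\nabla\mathfrak{h})=\tilde{g}^{tt}=-1/t<0$, a timelike normal. Thus for $t_{i}\in\mathbb{R}_{>0}$ each level set is a spacelike hypersurface, and the family $\{(U_{\varphi})_{t}\}_{t\in\mathbb{R}_{>0}}$ foliates the Lorentzian region $\bigcup_{t\in\mathbb{R}_{>0}}(U_{\varphi})_{t}$ into disjoint $(n-1)$--dimensional Riemannian manifolds, as claimed.

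The step I expect to demand the most care is the global well-definedness and smoothness of $\mathfrak{h}$ over the entire union $U$, since each patch $U(q)$ carries its own $t$--coordinate and one must ensure these assemble into a single smooth function whose level sets agree on overlaps. Because the preceding Remark together with Definition~\ref{def:H-Global} already assert this patch-independence on $\mathcal{H}$, the remaining analytic content --- the submersion/foliation argument and the signature computation --- is essentially routine.
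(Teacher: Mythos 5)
Your argument is correct, but there is nothing in the paper to compare it against: the paper states this corollary without any proof, treating it as an immediate consequence of Definitions~\ref{def:Radical-Adapted-Gauss-Like-Coordinates} and~\ref{def:H-Global} together with the preceding Remark. Your write-up therefore supplies exactly the details the paper leaves implicit, and all three of your steps are sound: $d\mathfrak{h}=dt\neq 0$ makes $\mathfrak{h}$ a submersion, so the regular value theorem and the fibers-of-a-submersion fact give the disjoint $(n-1)$--dimensional leaves; the pullback of $\tilde{g}$ to a slice is $(\tilde{g}_{0})_{t_{i}}=g_{ij}(t_{i},\mathbf{\hat{x}})\,dx^{i}dx^{j}$ since $dt$ annihilates the slice's tangent vectors; and $\tilde{g}(\nabla\mathfrak{h},\nabla\mathfrak{h})=\tilde{g}^{tt}=-1/t<0$ for $t>0$ correctly certifies that the leaves with $t_{i}\in\mathbb{R}_{>0}$ are spacelike. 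Two calibration points. First, positive definiteness of the spatial block $g_{ij}$ on all of $U$ is better cited as part of the canonical form itself rather than derived: your continuity argument only yields positive definiteness on \emph{some} neighborhood of $\mathcal{H}$, which need not a priori exhaust $U$; however, since $U$ is by definition a union of radical-adapted patches on which the canonical form holds, and the paper explicitly treats $\tilde{G}$ as positive definite (in the proof of Lemma~\ref{lem:Lightcone regular}) and calls $g_{ij}$ a family of Riemannian metrics (in the Remark following the corollary), nothing is lost by citing the definition directly. Second, you rightly identify the patch-independence and global smoothness of $\mathfrak{h}$ as the genuinely delicate point; note that the paper itself delegates this entirely to the unproved Remark after Definition~\ref{def:H-Global}, so your proof is no weaker than the source on this score --- but if one wanted a fully rigorous account, a transition-function argument showing that overlapping radical-adapted coordinates induce the same $t$--level sets (e.g.\ via the transformation theorem of~\cite{Hasse + Rieger-Transformation}) would be the missing ingredient for both your proof and the paper's.
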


\begin{rem}
Consequently, with this absolute time function $\mathfrak{h}$, we
can view the $g_{ij}(t,x^{1},\ldots,x^{n-1})$ from Definition~\ref{def:Radical-Adapted-Gauss-Like-Coordinates}
as a $1$--parameter family of $(n-1)$--dimensional metrics. This
family is a smoothly varying collection of metrics
on the spacelike hypersurfaces, parameterized by $t=\mathfrak{h}(t,\mathbf{\hat{x}})$.
Each metric in this set is uniquely determined by the value of
$t$, and the collection may be regarded as a smooth curve or a \textquotedblleft tube\textquotedblright{}
of metrics in the space of all possible Riemannian metrics. 
\end{rem}

\begin{assumption}
\label{assu:Assumption} Throughout this work, all statements involving
an $\mathcal{H}$--global manifold $(\tilde{M},\tilde{g})$ are understood
to hold \emph{within} a fixed $\mathcal{H}$--global neighborhood
$U=\bigcup_{q\in\mathcal{H}}U(q)\subset\tilde{M}$, as given by Definition~\ref{def:H-Global}.
Unless explicitly stated otherwise, all embeddings $\psi:U\to\mathbb{R}^{1,N}$
into Minkowski space and all compositions $\pi\circ\psi:U\to\mathcal{M}_{\mathrm{Misner}}$
are defined only on this neighborhood $U$. Any coordinates $(t,\mathbf{\hat{x}})$
and associated foliations are likewise understood to be restricted
to $U$.
\end{assumption}

Now we proceed to state our main Theorems and Propositions.
\begin{thm}
\label{thm:H-Global Minkowski} Let $(\tilde{M},\tilde{g})$ be an
$n$--dimensional transverse signature-type changing manifold with
a transverse radical, and fix an $\mathcal{H}$--global neighborhood
$U=\bigcup_{q\in\mathcal{H}}U(q)\subset\tilde{M}$. Then there exists
a sufficiently large integer $N$ and an $\mathcal{H}$--global isometric
embedding $\psi:U\to\mathbb{R}^{1,N-1}$ of $U$ (with the induced
metric $\tilde{g}|_{U}$) into Minkowski spacetime $(\mathbb{R}^{1,N-1},\eta)$.
\end{thm}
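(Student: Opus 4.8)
The plan is to reduce the statement to the classical Riemannian Nash embedding theorem by absorbing the entire signature change into a single timelike bulk coordinate. Everything rests on the radical-adapted Gauss-like normal form together with the globally defined absolute time function $\mathfrak{h}$ of Definition~\ref{def:H-Global}. On $U$ both the one-form $d\mathfrak{h}$ and the symmetric $(0,2)$-tensor $h:=\tilde{g}+\mathfrak{h}\,d\mathfrak{h}\otimes d\mathfrak{h}$ are globally well defined, and in any radical-adapted chart they reduce to $d\mathfrak{h}=dt$ and $h=g_{ij}(t,\hat{x})\,dx^{i}dx^{j}$. Hence $\tilde{g}=-\mathfrak{h}\,d\mathfrak{h}\otimes d\mathfrak{h}+h$, where $h$ is positive-semidefinite with one-dimensional radical spanned by $\partial_{t}$, and is positive-definite on every slice, including across $\mathcal{H}$.

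First I would introduce the auxiliary tensor $\hat{g}:=(1+\mathfrak{h}^{2})\,d\mathfrak{h}\otimes d\mathfrak{h}+h$. In an adapted chart it is block-diagonal with positive blocks $1+t^{2}$ and $[g_{ij}]$, so $\hat{g}$ is a smooth, globally defined Riemannian metric on $U$. Applying the (global, $C^{\infty}$) Nash isometric embedding theorem to $(U,\hat{g})$ produces an integer $K$ and a smooth isometric embedding $F:U\to\mathbb{R}^{K}$ with $F^{*}\delta=\hat{g}$, where $\delta$ denotes the Euclidean inner product.

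Next I would prepend one timelike coordinate carrying the full signature change. Since $1+s+s^{2}>0$ for every real $s$ (its discriminant is negative), the function $T:=\int_{0}^{\mathfrak{h}}\sqrt{1+s+s^{2}}\,ds$ is smooth and globally defined on $U$, with $dT\otimes dT=(1+\mathfrak{h}+\mathfrak{h}^{2})\,d\mathfrak{h}\otimes d\mathfrak{h}$. Setting $\psi:=(T,F):U\to\mathbb{R}^{1,K}$, with the first coordinate timelike and the remaining $K$ spacelike so that $\eta=-dT\otimes dT+\delta$, a one-line computation gives
\[
\psi^{*}\eta=-dT\otimes dT+F^{*}\delta=-(1+\mathfrak{h}+\mathfrak{h}^{2})\,d\mathfrak{h}\otimes d\mathfrak{h}+(1+\mathfrak{h}^{2})\,d\mathfrak{h}\otimes d\mathfrak{h}+h=-\mathfrak{h}\,d\mathfrak{h}\otimes d\mathfrak{h}+h=\tilde{g}.
\]
Thus $\psi$ is isometric onto its image in Minkowski space $\mathbb{R}^{1,N-1}$ with $N=K+1$, and the target has exactly one timelike direction, which is the whole point of strengthening Proposition~\ref{prop:pseudo-Euclidean embedding}.

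It remains to verify that $\psi$ is an embedding on all of $U$: as $F$ is already an injective immersion and a homeomorphism onto its image, so is $\psi=(T,F)$; and since $T$, $h$, $\hat{g}$ and $F$ are all manufactured from the globally defined $\mathfrak{h}$ and $\tilde{g}$, the map $\psi$ is $\mathcal{H}$-global and smooth across $\mathcal{H}$, the integrand $\sqrt{1+s+s^{2}}$ being smooth at $s=0$ so that no regularity is lost at the signature-change locus. I expect the only real obstacles to be (i) confirming that $\hat{g}$ is a genuine smooth Riemannian metric on the whole of $U$ --- in particular that $h$ remains positive-definite on the slices as one crosses $\mathcal{H}$, precisely where $\tilde{g}$ degenerates, which I would settle using the transverse-radical hypothesis and Corollary~\ref{cor: Time Function Foliation} --- and (ii) invoking a version of the Nash theorem valid for the possibly non-compact manifold $U$ that yields an embedding rather than a mere immersion. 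The sign flip that is usually the crux for signature-changing metrics costs nothing here: the explicit choice $1+\mathfrak{h}^{2}$ keeps $1+\mathfrak{h}+\mathfrak{h}^{2}$ strictly positive for all values of $\mathfrak{h}$, so the single timelike coordinate $T$ accounts simultaneously for the Riemannian regime $\mathfrak{h}<0$ and the Lorentzian regime $\mathfrak{h}>0$.
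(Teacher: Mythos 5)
Your proposal is correct, and it follows the same overall architecture as the paper's proof---split off the degenerate time direction, apply the Nash embedding theorem to an auxiliary Riemannian metric, and prepend a single timelike coordinate whose derivative compensates for the signature change---but your decomposition is genuinely different and in one respect stronger. The paper decomposes $[\tilde{g}]=[f]+[h]$ with auxiliary metric $h=(dt)^{2}+g_{ij}\,dx^{i}dx^{j}$, applies Nash to $h$, and solves $1-(f'(t))^{2}=-t$, which forces $(f'(t))^{2}=1+t$ and the temporal function $f(t)=-\tfrac{2}{3}(1+t)^{3/2}$; this is only real and smooth where $t>-1$, a restriction the paper never addresses in the proof itself but which resurfaces later (e.g.\ in Proposition~\ref{prop:embedding transverse to the orbital foliation}, where $t>-1$ is explicitly assumed). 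Your scheme is the same compensation trick with a different split: choosing $\hat{g}=(1+\mathfrak{h}^{2})\,d\mathfrak{h}\otimes d\mathfrak{h}+h$ and $T=\int_{0}^{\mathfrak{h}}\sqrt{1+s+s^{2}}\,ds$ keeps the radicand $1+s+s^{2}$ strictly positive for every real $s$, so your embedding is valid on all of $U$ with no restriction on the range of the absolute time function; moreover, phrasing everything invariantly through $\mathfrak{h}$ and $d\mathfrak{h}$ makes the global consistency across the patches $U(q)$ transparent, whereas the paper performs the block decomposition chart by chart and leaves the patching implicit. What the paper's specific choice buys is the explicit and simple temporal function $-\tfrac{2}{3}(1+t)^{3/2}$, whose concrete form is exploited downstream in the transversality computations and the Misner-space embedding (Theorem~\ref{thm:H-Global Misner}, Proposition~\ref{prop:Toy Model Misner}); with your $T$ those later arguments would go through structurally but would have to be redone with $\sqrt{1+t+t^{2}}$ in place of $\sqrt{1+t}$. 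Your two flagged caveats are benign: positive-definiteness of the spatial part across $\mathcal{H}$ is exactly what the canonical form of Definition~\ref{def:Radical-Adapted-Gauss-Like-Coordinates} together with Corollary~\ref{cor: Time Function Foliation} provides (the paper relies on the same fact), and the version of Nash's theorem cited in~\cite{Nash} already covers non-compact manifolds at the cost of a larger target dimension, which is all the statement requires since $N$ is only asked to be sufficiently large.
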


To further develop this model, we consider a braneworld scenario with
a single circular extra dimension, achieved through purely spatial
compactification. Given the necessity of compactifying an additional
dimension, we propose that the appropriate higher-dimensional space
for this model is a dimensionally augmented Misner space. This manifold
is constructed by imposing a periodic identification in Minkowski
spacetime via an equivalence relation defined by a discrete group
of hyperbolic rotations, thereby realizing the required compactification.
To prove Theorem~\ref{thm:H-Global Misner} and Proposition~\ref{prop:Toy Model Misner},
we first establish a ``transverse foliation result'', using the isometric embedding $\Phi=(\Phi^1,\text{\dots},\Phi^{N-1})$ provided by the Nash embedding theorem 
for Riemannian manifolds~\cite{Nash}:
\begin{prop}
\label{prop:embedding transverse to the orbital foliation}Let $(\tilde{M},\tilde{g})$
be an $n$--dimensional transverse signature-type changing manifold
with a transverse radical, with coordinates $(t,\mathbf{\hat{x}})=(x^{1},\text{\dots},x^{n\text{\textminus}1})$.\linebreak{}
Let $\psi:U\subseteq\tilde{M}\longrightarrow\mathbb{R}^{1,N-1}$,
\[
\psi(t,\mathbf{\hat{x}}):=\left(-\frac{2}{3}(1+t)^{\frac{3}{2}},\Phi^{1}(t,\mathbf{\hat{x}}),\ldots,\Phi^{N-1}(t,\mathbf{\hat{x}})\right)
\]
be the embedding given in Theorem~\ref{thm:H-Global Minkowski}.
Assume the image of $\psi$ lies in the region 
\[
\mathcal{R}:=\{(\tau,y^{1},\dots,y^{N-1})\in\mathbb{R}^{1,N-1}\;|\;y^{1}-\tau>0\}.
\]
Then the image $\mathrm{Im}(\psi)$ is transverse to the orbital foliation
generated by the one-parameter subgroup of hyperbolic rotations $\Gamma\subset O(1,N-1)$
(acting in the $(\tau,y^{1})$--plane), provided that the spatial
tangent vectors of the embedding are non-zero.
\end{prop}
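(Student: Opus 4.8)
The plan is to recast the stated transversality as the single condition that the orbit direction is nowhere tangent to the brane. The one-parameter group $\Gamma$ of hyperbolic rotations in the $(\tau,y^1)$--plane is generated by the boost Killing field $V=y^1\partial_\tau+\tau\partial_{y^1}$, whose orbits are the leaves of the foliation. Since $\Gamma$ is one-dimensional and $N$ is large, I read ``$\mathrm{Im}(\psi)$ transverse to the orbital foliation'' as the assertion that $V$ is nowhere tangent to $\mathrm{Im}(\psi)$; this is precisely the hypothesis needed in Theorem~\ref{thm:H-Global Misner}, because it is what guarantees that $\pi\circ\psi$ meets each (now compactified) orbit cleanly and descends to an immersion into Misner space. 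Concretely, I would show that at every $(t,\hat{\mathbf x})$ the $n+1$ ambient vectors $V,\partial_t\psi,\partial_{x^1}\psi,\dots,\partial_{x^{n-1}}\psi$ are linearly independent.

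First I record the ambient components. Writing $\Phi=(\Phi^1,\dots,\Phi^{N-1})$ and using $\tau=-\tfrac23(1+t)^{3/2}$, one has $\partial_t\psi=(-(1+t)^{1/2},\partial_t\Phi)$, $\partial_{x^a}\psi=(0,\partial_{x^a}\Phi)$, and $V=(\Phi^1,\tau,0,\dots,0)$. The decisive structural feature is that the spatial frame vectors $\partial_{x^a}\psi$ carry \emph{no} $\tau$--component. Hence in any relation $cV+\alpha\,\partial_t\psi+\sum_a\beta_a\,\partial_{x^a}\psi=0$ the $\tau$--line reads $c\,\Phi^1=\alpha(1+t)^{1/2}$, which pins $\alpha$ in terms of $c$, while the remaining $\mathbb{R}^{N-1}$--valued equation is $c\,\tau\,e_1+\alpha\,\partial_t\Phi+\sum_a\beta_a\,\partial_{x^a}\Phi=0$, with $e_1$ the $y^1$--axis.

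Next I would exploit the isometry identities forced by $\psi^*\eta=\tilde g$ in radical-adapted Gauss-like coordinates, namely $|\partial_t\Phi|^2=1$, $\langle\partial_t\Phi,\partial_{x^a}\Phi\rangle=0$ and $\langle\partial_{x^a}\Phi,\partial_{x^b}\Phi\rangle=g_{ab}$, which are exactly the relations underlying the construction in Theorem~\ref{thm:H-Global Minkowski}. Taking the Euclidean inner product of the $y$--equation with $\partial_t\Phi$ and with each $\partial_{x^b}\Phi$, and using that the slice Gram matrix $g_{ab}$ is invertible because the spatial tangent vectors are nonzero (hence linearly independent), I solve $\alpha=-c\,\tau\,\partial_t\Phi^1$ and $\beta_a=-c\,\tau\sum_b g^{ab}\partial_{x^b}\Phi^1$. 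Substituting these back collapses the full $y$--equation to $c\,\tau\,P_{\Pi^\perp}e_1=0$, where $\Pi=\mathrm{span}\{\partial_t\Phi,\partial_{x^1}\Phi,\dots,\partial_{x^{n-1}}\Phi\}$ and $P_{\Pi^\perp}$ is the orthogonal projection off $\Pi$, while the $\tau$--line becomes $c\bigl(\Phi^1-\tfrac23(1+t)^2\partial_t\Phi^1\bigr)=0$.

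Since $\tau\neq0$ throughout, the $y$--equation forces either $c=0$, which then gives $\alpha=\beta_a=0$ and the desired independence, or $P_{\Pi^\perp}e_1=0$, i.e.\ the boost's spatial axis $e_1$ lies in the tangent space $\Pi$ of the lifted spatial embedding. The main obstacle—indeed the only one—is therefore to exclude the coincidence locus on which $e_1\in\Pi$ and $\Phi^1=\tfrac23(1+t)^2\partial_t\Phi^1$ hold simultaneously, for there the vectors genuinely become dependent and $V$ is tangent. This is exactly where the standing hypotheses must do the work: I would argue that the region constraint $\mathrm{Im}(\psi)\subset\mathcal{R}=\{y^1-\tau>0\}$ together with $\tau<0$ and the nonvanishing (full-rank) spatial frame keeps $e_1$ transverse to $\Pi$, so that $P_{\Pi^\perp}e_1\neq0$ and only $c=0$ survives. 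I expect verifying this last exclusion—showing that the boost axis is never absorbed into the spatial tangent space under the region and nondegeneracy assumptions—to be the delicate step, whereas the reduction to it via the isometry identities is routine.
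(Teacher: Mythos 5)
Your reduction is correct and in fact sharper than the paper's own argument. The paper's proof projects the tangency relation entirely onto the spatial factor $\mathbb{R}^{N-1}$, analyzes the homogeneous components $A\ge 2$ (its equations (3)), and concludes $a=b_i=0$ from the hypothesis that the spatial tangents are not all collinear with $e_1$, reaching a contradiction with the first spatial component since $\tau\neq 0$; the $\tau$--component of the ambient relation is never used. You instead keep the $\tau$--line and exploit the Gram identities $\vert\partial_t\Phi\vert^2=1$, $\langle\partial_t\Phi,\partial_{x^a}\Phi\rangle=0$, $\langle\partial_{x^a}\Phi,\partial_{x^b}\Phi\rangle=g_{ab}$ coming from the Nash embedding of $h$, solving for $\alpha,\beta_a$ and collapsing everything to the pair of conditions $P_{\Pi^\perp}e_1=0$ and $\Phi^1=\tfrac{2}{3}(1+t)^2\,\partial_t\Phi^1$. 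That computation is right (one small slip: $g_{ab}$ is invertible because it is positive definite on the slices, per Corollary~\ref{cor: Time Function Foliation}, not because nonzero vectors are independent), and it identifies the \emph{exact} tangency locus, which the paper's projected-system argument does not.

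The genuine gap is that you stop precisely at the decisive step, and the completion you sketch would fail. You propose to kill the coincidence locus by showing $e_1$ is never absorbed into $\Pi=\mathrm{span}\{\partial_t\Phi,\partial_{x^1}\Phi,\dots,\partial_{x^{n-1}}\Phi\}$, i.e.\ $P_{\Pi^\perp}e_1\neq 0$ everywhere. But in the paper's own canonical model (Proposition~\ref{prop:Toy Model Misner}, worked out in the Appendix) the Nash map $\Phi$ is the identity, so $\partial_t\Phi=e_1\in\Pi$ at \emph{every} point and $P_{\Pi^\perp}e_1\equiv 0$; transversality there is rescued solely by your scalar equation, which becomes $t=\tfrac{2}{3}(1+t)^2$, i.e.\ $2t^2+t+2=0$, with negative discriminant and hence no real roots --- exactly the appendix computation. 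So the ``only obstacle'' cannot be removed by the region constraint $\mathcal{R}$ plus nondegeneracy of the frame; any correct proof must control the scalar compatibility equation on the set where $e_1\in\Pi$, and the hypothesis of the proposition (spatial projections not all in $\mathrm{span}(e_1)$) does not by itself exclude that set. For what it is worth, your analysis also exposes that the paper's own proof elides the same point: rank at least one of the $A\ge 2$ block does not force $a=b_1=\cdots=b_{n-1}=0$ (in the toy model its system (3) yields only $b_1=0$ while $a=\tau$ solves (2)), so the honest content of the proposition is your dichotomy, not the paper's claimed shortcut.
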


\begin{thm}
\label{thm:H-Global Misner}Let $(\tilde{M},\tilde{g})$ be an $n$--dimensional
transverse signature-type changing manifold with a transverse radical
and with fixed $\mathcal{H}$--global neighborhood $U=\bigcup_{q\in\mathcal{H}}U(q)\subset\tilde{M}$.
Then there exists an $\mathcal{H}$--global isometric embedding $\pi\circ\psi:U\longrightarrow\mathcal{M}_{\mathrm{Misner}}$
of $(U,\tilde{g}|_{U})$ into Misner space $\mathcal{M}_{\mathrm{Misner}}$.
\end{thm}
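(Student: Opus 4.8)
The plan is to realize the required map as the composition $\pi\circ\psi$, where $\psi:U\to\mathbb{R}^{1,N-1}$ is the isometric embedding of Theorem~\ref{thm:H-Global Minkowski} and $\pi$ is the Misner quotient projection. First I would fix the construction of $\mathcal{M}_{\mathrm{Misner}}$: let $g$ be a hyperbolic rotation of fixed rapidity $\xi_0$ acting in the $(\tau,y^1)$--plane and fixing $y^2,\dots,y^{N-1}$, let $\Gamma_0=\langle g\rangle\cong\mathbb{Z}$ be the discrete subgroup it generates inside the one--parameter boost group, and set $\mathcal{M}_{\mathrm{Misner}}:=\mathcal{R}/\Gamma_0$ with $\mathcal{R}=\{y^1-\tau>0\}$ as in Proposition~\ref{prop:embedding transverse to the orbital foliation}. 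On $\mathcal{R}$ the action of $\Gamma_0$ is free (a fixed point would force $y^1-\tau=0$) and properly discontinuous, so $\pi:\mathcal{R}\to\mathcal{M}_{\mathrm{Misner}}$ is a pseudo--Riemannian covering and, in particular, a local isometry. By Assumption~\ref{assu:Assumption} and the standing hypothesis of Proposition~\ref{prop:embedding transverse to the orbital foliation}, $\mathrm{Im}(\psi)\subset\mathcal{R}$, so $\pi\circ\psi$ is defined on all of $U$.

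Because $\pi$ is a local isometry and $\psi$ is an isometric embedding, the composite is automatically an isometric immersion: $d(\pi\circ\psi)_p=d\pi_{\psi(p)}\circ d\psi_p$, where $d\pi$ is a linear isometry on each tangent space, so $\pi\circ\psi$ pulls the Misner metric back to $\tilde g|_U$. The transversality result of Proposition~\ref{prop:embedding transverse to the orbital foliation} then supplies the geometric content needed for the map to be an embedding in the fibre direction: since $\mathrm{Im}(\psi)$ is everywhere transverse to the foliation by boost orbits (the hyperbolae $uv=\mathrm{const}$ in the null coordinates $u=y^1-\tau$, $v=y^1+\tau$), its tangent spaces never contain the orbit direction, and the compact extra dimension of $\mathcal{M}_{\mathrm{Misner}}$ — obtained by rolling up each hyperbola into a circle under $\Gamma_0$ — stays transverse to the projected brane. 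In particular $\pi\circ\psi$ does not fold the image onto the compactified fibre, so it is a local embedding.

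The main obstacle is to promote this to a global embedding, i.e. to establish that $\pi\circ\psi$ is injective and proper onto its image. Injectivity requires that $\psi(U)$ meet each $\Gamma_0$--orbit at most once: if $\pi(\psi(p))=\pi(\psi(p'))$ then $\psi(p')=g^{k}\psi(p)$ for some $k\in\mathbb{Z}$, and in null coordinates $g^{k}$ acts by $u\mapsto e^{-k\xi_0}u$, $v\mapsto e^{k\xi_0}v$ while fixing $\Phi^2,\dots,\Phi^{N-1}$, so one must exclude $k\neq0$. Here I would use that, shrinking $U$ in the radical direction if necessary (which preserves $\mathcal{H}$--globality), the timelike component $\tau=-\tfrac{2}{3}(1+t)^{3/2}$ is strictly monotone in $t$ with $t$ ranging over a bounded interval, together with the freedom to fix the rapidity $\xi_0$ and to translate the Nash data $\Phi$, so that the orbit coordinate $\log u=\log(\Phi^1-\tau)$ along $\psi(U)$ is confined to a single fundamental interval of length $\xi_0$ for the multiplicative $\Gamma_0$--action; this forces $k=0$. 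Properness onto the image then follows from the proper discontinuity of $\Gamma_0$ together with the closedness of $\psi(U)$ in $\mathcal{R}$, and with injectivity in hand $\pi\circ\psi$ is the desired $\mathcal{H}$--global isometric embedding into $\mathcal{M}_{\mathrm{Misner}}$. The genuinely delicate point is enforcing this fundamental--domain condition uniformly when $\mathcal{H}$ is non-compact, so that $\log u$ may fail to be bounded; in that regime one must instead separate orbit--related points using the remaining orbit invariants $uv=(y^1)^2-\tau^2$ and $\Phi^2,\dots,\Phi^{N-1}$, and it is precisely here that transversality and the explicit form of $\psi$ have to be combined most carefully.
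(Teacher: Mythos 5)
Your overall skeleton is the same as the paper's: compose the Minkowski embedding $\psi$ of Theorem~\ref{thm:H-Global Minkowski} with the quotient map $\pi:\mathcal{R}\to\mathcal{R}/\Gamma$, observe that $\pi$ is a local isometry so that $\pi\circ\psi$ is automatically an isometric immersion (this is Proposition~\ref{prop:misner_immersion} in the paper, proved via the pullback functoriality of Lemma~\ref{lem:Pullback-Property}), and then isolate injectivity as the crux. The difficulty is that your mechanism for injectivity does not close. You propose to confine the orbit coordinate $\log u=\log(\Phi^{1}-\tau)$ along $\psi(U)$ to a single fundamental interval of length $\xi_{0}$, by shrinking $U$ in the radical direction, choosing the rapidity, and translating the Nash data. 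This fails in general: shrinking $U$ in the $t$--direction only bounds $\tau$, not $\Phi^{1}$, and on a non-compact slice $\mathfrak{h}^{-1}(t)$ the Nash coordinate $\Phi^{1}(t,\mathbf{\hat{x}})$ is typically unbounded (an isometric image of a complete non-compact slice cannot be bounded), while a translation of $\Phi$ shifts $\log u$ but does not compress its oscillation. So for non-compact $\mathcal{H}$ --- the generic case for the cosmological models targeted here --- the condition ``$\log u$ lies in one fundamental interval'' cannot be arranged, and you concede exactly this in your final sentence without resolving it. You have therefore proved the theorem only under an unstated compactness/boundedness hypothesis. (Shrinking $U$ also quietly changes the \emph{fixed} neighborhood in the theorem statement, though that is a lesser issue since the shrunken set is still $\mathcal{H}$--global.) The paper's proof avoids boundedness entirely by splitting on the absolute time function: if $t(p_{1})=t(p_{2})$, then $\tau(p_{1})=\tau(p_{2})$ and, since the boost fixes the spectator coordinates $Y^{A}$, $A\ge2$, equality of the remaining coordinate $Y^{1}$ follows, contradicting injectivity of $\psi$; if $t(p_{1})\neq t(p_{2})$, then an orbit meeting two distinct $t$--levels of $\psi(U)$ would, by the dichotomy of Lemma~\ref{lem:orbit-tangency-H-global} and Corollary~\ref{cor:injectivity-by-lemma-orbit-tangency}, force the generator $K$ to be tangent to $\psi(U)$ at an interior critical point of $F_{q}(s)=t(\psi^{-1}(\gamma_{s}(q)))$, which Proposition~\ref{prop:embedding transverse to the orbital foliation} excludes. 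That monotonicity-along-orbits argument is precisely the missing ingredient your ``orbit invariants'' fallback gestures at but does not supply.

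A secondary gap: you claim properness (hence embedding) from ``proper discontinuity of $\Gamma_{0}$ together with the closedness of $\psi(U)$ in $\mathcal{R}$.'' Closedness of $\psi(U)$ in $\mathcal{R}$ is not available --- $\psi$ is an embedding, so its image is only locally closed, and nothing in Theorem~\ref{thm:H-Global Minkowski} makes it closed. The paper instead proves the homeomorphism-onto-image property topologically: for open $\bar{V}\subset U$ one writes $\psi(\bar{V})=V\cap\psi(U)$ with $V\subset\mathcal{R}$ open and uses that the quotient map of a group action is open, so that $(\pi\circ\psi)(\bar{V})$ is open in the subspace topology of the image. Your transversality discussion (``the map does not fold onto the compactified fibre'') correctly captures why $\pi\circ\psi$ is a local embedding, but local embedding plus the unproved properness claim is where your argument stops short of the stated global conclusion.
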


Next, we consider a scenario where an $n$--dimensional transverse
signature-type changing canonical model is isometrically embedded
as a hypersurface in an $(n+1)$--dimensional Minkowski spacetime.
This constitutes a significantly stronger result than an $\mathcal{H}$--global
embedding for this specific case. This setup features a single extra
dimension, which aligns with established braneworld frameworks.
\begin{prop}
\label{prop: Toy Model Minkowski embedding} Let $(\mathbb{R}^{n},\tilde{g})$
be the $n$--dimensional signature-type changing manifold with the
metric $\tilde{g}=-t(dt)^{2}+\sum_{i=1}^{n-1}(dx^{i})^{2}$. There
exists a global isometric embedding $f:\mathbb{R}^{n}\longrightarrow\mathbb{R}^{1,n}$
into Minkowski space $(\mathbb{R}^{1,n},\eta)$.
\end{prop}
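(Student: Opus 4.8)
The plan is to build $f$ by an explicit ansatz that exploits the special form of $\tilde g$: since the spatial block $\sum_{i=1}^{n-1}(dx^i)^2$ is already flat and independent of $t$, the coordinates $x^1,\dots,x^{n-1}$ can be carried over to the ambient space by the identity, and the entire difficulty is concentrated in the single degenerate direction $t$. Writing the ambient coordinates of $\mathbb{R}^{1,n}$ as $(\tau,y^1,\dots,y^n)$ with $\eta=-d\tau^2+\sum_{a=1}^n(dy^a)^2$, I would look for a map of the form
\[
f(t,x^1,\dots,x^{n-1})=\bigl(\tau(t),\,u(t),\,x^1,\dots,x^{n-1}\bigr),
\]
so that only the two profile functions $\tau,u:\mathbb{R}\to\mathbb{R}$ remain to be determined. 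Pulling back $\eta$ gives $f^\ast\eta=\bigl[(u')^2-(\tau')^2\bigr]\,dt^2+\sum_{i=1}^{n-1}(dx^i)^2$, so the isometry condition $f^\ast\eta=\tilde g$ reduces to the single scalar equation
\[
(\tau'(t))^2-(u'(t))^2=t.
\]

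The main point---and the only genuine obstacle---is to solve this equation by functions that are smooth across the signature-change surface $\mathcal H=\{t=0\}$. The naive choice $u\equiv0$, $\tau'=\sqrt t$, which embeds the curve using the timelike direction alone, fails precisely for $t\le 0$, where $\tilde g$ is Riemannian and a lone timelike coordinate cannot produce a positive $dt^2$ coefficient; this is exactly why one extra spatial dimension $u(t)$ must be spent, and why the target is $\mathbb{R}^{1,n}$ rather than $\mathbb{R}^{1,n-1}$. I would resolve the smoothness issue by factoring the left-hand side as a difference of squares and prescribing the two factors to be smooth with product $t$: setting $\tau'-u'=1$ and $\tau'+u'=t$ yields
\[
\tau'(t)=\tfrac12(t+1),\qquad u'(t)=\tfrac12(t-1),
\]
and hence, after integration, the polynomial profiles $\tau(t)=\tfrac14 t^2+\tfrac12 t$ and $u(t)=\tfrac14 t^2-\tfrac12 t$. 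These are entire functions of $t$, so $f$ is globally $C^\infty$; in particular the fractional-power non-smoothness of the profile $-\tfrac23(1+t)^{3/2}$ used in the general $\mathcal H$--global construction of Theorem~\ref{thm:H-Global Minkowski} is completely avoided, which is what makes a single-chart, codimension-one embedding of the whole of $\mathbb{R}^n$ possible here.

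It then remains to check that $f$ is an embedding. For the immersion property I would note that $\partial_t f=(\tau',u',0,\dots,0)$ while the $\partial_{x^i}f$ span the last $n-1$ ambient coordinates; since $(\tau'(t),u'(t))=\bigl(\tfrac{t+1}{2},\tfrac{t-1}{2}\bigr)$ never vanishes (the two factors have disjoint zeros at $t=-1$ and $t=1$), the differential $df$ has full rank $n$ everywhere. For injectivity and the topological-embedding property I would use the observation that $\tau(t)-u(t)=t$, so the linear map $L(\tau,y^1,\dots,y^n)=(\tau-y^1,\,y^2,\dots,y^n)$ on $\mathbb{R}^{1,n}$ is a global smooth left inverse of $f$; this forces $f$ to be injective and a homeomorphism onto its (closed) image, indeed proper, completing the verification that $f$ is a global isometric embedding. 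A direct substitution confirming $f^\ast\eta=\tilde g$ then closes the argument, and this last computation is routine given the construction.
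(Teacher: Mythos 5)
Your proof is correct, and it takes a genuinely different route from the paper's. Both arguments exploit the same reduction: the flat spatial block is carried over unchanged, and everything collapses to two profile functions of $t$ satisfying the single scalar equation $(\tau')^{2}-(u')^{2}=t$ (the paper writes this as $-t=-(\vartheta')^{2}+(\xi')^{2}$ after reducing to a curve in a $3$--dimensional flat factor). The paper then resolves this underdetermined equation geometrically: it prescribes the image curve to be the hyperbola $\vartheta^{2}-\xi^{2}=1$ rotated by $45^{\circ}$ and translated through the origin, which leads to an implicit integral relation between $\vartheta$ and $t$ (Equation~\ref{eq:explicit isometric embedding}) that is analyzed only asymptotically, and which in fact yields an infinite family of solutions. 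You instead factor the difference of squares linearly, $\tau'-u'=1$, $\tau'+u'=t$, obtaining the explicit polynomial profiles $\tau=\tfrac14 t^{2}+\tfrac12 t$ and $u=\tfrac14 t^{2}-\tfrac12 t$. This buys manifest global smoothness (entire functions, with no issue at $t=-1$ of the kind afflicting the profile $-\tfrac23(1+t)^{3/2}$ from Theorem~\ref{thm:H-Global Minkowski}), a fully closed-form embedding, and---via the identity $\tau(t)-u(t)=t$---a global linear left inverse giving injectivity, properness, and the homeomorphism-onto-image property; the paper's proof verifies only the metric condition along the curve and leaves these embedding properties essentially implicit. One caveat worth recording: your image satisfies $y^{1}-\tau=u(t)-\tau(t)=-t$, so it does \emph{not} lie in the region $\mathcal{R}=\{y^{1}-\tau>0\}$ that the paper needs later for the quotient to Misner space (Proposition~\ref{prop:Toy Model Misner}); the paper's hyperbola solutions are chosen partly with that subsequent construction in mind. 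For the proposition as stated, however, your argument is complete, and on the embedding-property side it is sharper than the paper's.
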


Considering Misner space, we obtain an effective reduction from an
$(n+1)$--dimensional theory (describing the bulk spacetime) to an
$n$--dimensional theory (effective on the brane), accompanied by
additional gauge fields. When $n=4$, this  aligns with the Kaluza-Klein
framework~\cite{Monte. Mathematical Support to Braneworld Theory},
which posits an inherently higher-dimensional universe with compactified
extra dimensions. 
\begin{prop}
\label{prop:Toy Model Misner}Let $(\mathbb{R}^{n},\tilde{g})$ be
the $n$--dimensional signature-type changing toy model with the
metric $\tilde{g}=-t(dt)^{2}+\sum_{i=1}^{n-1}(dx^{i})^{2}$. Then
there exists a global isometric embedding of the manifold $(\mathbb{R}^{n},\tilde{g})$
into $(n+1)$-dimensional Misner space $\mathcal{M}_{\text{Misner}}$.
\end{prop}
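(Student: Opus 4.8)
The plan is to realize $(n{+}1)$-dimensional Misner space concretely as a quotient and then descend the Minkowski embedding already provided by Proposition~\ref{prop: Toy Model Minkowski embedding}. Write $\mathcal{M}_{\mathrm{Misner}}=\mathcal{W}/\Gamma$, where $\mathcal{W}\subset\mathbb{R}^{1,n}$ is the Rindler wedge $\{(\tau,y^{1},\dots,y^{n}):y^{1}>|\tau|\}$ and $\Gamma=\langle B\rangle\subset O(1,n)$ is generated by a single hyperbolic rotation $B$ of parameter $\xi_{0}$ acting in the $(\tau,y^{1})$-plane and fixing $y^{2},\dots,y^{n}$. On $\mathcal{W}$ I would pass to hyperbolic coordinates $\tau=\rho\sinh\theta$, $y^{1}=\rho\cosh\theta$ with $\rho>0$, in which $\eta$ takes the form $d\rho^{2}-\rho^{2}d\theta^{2}+\sum_{a=2}^{n}(dy^{a})^{2}$ and the identification becomes $\theta\sim\theta+\xi_{0}$. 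Since $\Gamma$ then acts freely and properly discontinuously (it is a translation in $\theta$), the projection $\pi\colon\mathcal{W}\to\mathcal{M}_{\mathrm{Misner}}$ is a local isometry. Note $\mathcal{W}\subseteq\mathcal{R}=\{y^{1}-\tau>0\}$, so the hypothesis of Proposition~\ref{prop:embedding transverse to the orbital foliation} is available.

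Next I would pin down the embedding. The freedom in the radial/timelike profile of the Minkowski embedding of Proposition~\ref{prop: Toy Model Minkowski embedding} lets me choose it to land in $\mathcal{W}$ with a \emph{strictly monotonic} radial coordinate: set $f(t,\hat{\mathbf{x}})=(\rho(t)\sinh\theta(t),\,\rho(t)\cosh\theta(t),\,x^{1},\dots,x^{n-1})$ with $\rho(t):=1+e^{-t}$ and $\theta$ fixed by the isometry condition $(\rho')^{2}-\rho^{2}(\theta')^{2}=-t$, that is $(\theta')^{2}=\bigl(e^{-2t}+t\bigr)/(1+e^{-t})^{2}$. A short check shows $e^{-2t}+t>0$ for all $t\in\mathbb{R}$ (its minimum is $(1+\ln 2)/2>0$), so $\theta'$ is real and smooth and $\theta(t)=\int_{0}^{t}\theta'(s)\,ds$ is smooth; pulling back $d\rho^{2}-\rho^{2}d\theta^{2}+\sum_{a\ge2}(dy^{a})^{2}$ along $f$ returns exactly $\tilde{g}=-t\,dt^{2}+\sum_{i}(dx^{i})^{2}$, while $\rho(t)>1$ keeps the image inside $\mathcal{W}$.

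With $f$ in hand, define $\bar{f}:=\pi\circ f$ and verify the three defining properties. Isometry is automatic from $(\pi\circ f)^{*}g_{\mathrm{Misner}}=f^{*}(\pi^{*}g_{\mathrm{Misner}})=f^{*}\eta=\tilde{g}$, using that $\pi$ is a local isometry. For the immersion property I would invoke Proposition~\ref{prop:embedding transverse to the orbital foliation}: the spatial tangent vectors $\partial_{x^{i}}f=e_{i+1}$ are manifestly nonzero, and $\rho'=-e^{-t}\neq0$ supplies a $\partial_{\rho}$-component so that $\partial_{t}f$ is never tangent to a boost orbit; hence $\mathrm{Im}(f)$ is transverse to the orbital foliation, and since $\pi$ is a local diffeomorphism, $\bar{f}$ is an immersion.

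The main obstacle is global injectivity under the periodic identification $\theta\sim\theta+\xi_{0}$: a priori the image could wind around the compact $\theta$-circle and self-intersect in the quotient. Here the monotonicity of $\rho$ does the decisive work. If $\bar{f}(t,\hat{\mathbf{x}})=\bar{f}(t',\hat{\mathbf{x}}')$, then because $\Gamma$ fixes $y^{2},\dots,y^{n}$ one gets $\hat{\mathbf{x}}=\hat{\mathbf{x}}'$, and because every element of $\Gamma$ preserves $\rho=\sqrt{(y^{1})^{2}-\tau^{2}}$ one gets $\rho(t)=\rho(t')$; strict monotonicity of $\rho(t)=1+e^{-t}$ then forces $t=t'$, irrespective of any $\theta$-winding. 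Thus $\bar{f}$ is injective, and the same monotonicity makes $t\mapsto\rho(t)$ a homeomorphism onto its image, so the injective immersion is in fact a topological embedding. This produces the desired global isometric embedding $\bar{f}\colon(\mathbb{R}^{n},\tilde{g})\hookrightarrow\mathcal{M}_{\mathrm{Misner}}$, strictly stronger than the $\mathcal{H}$-global statement of Theorem~\ref{thm:H-Global Misner}.
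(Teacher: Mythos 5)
Your proposal is correct, but it takes a genuinely different route from the paper. The paper's proof descends its canonical $\mathcal{H}$--global embedding $\psi(t,\mathbf{\hat{x}})=\left(-\tfrac{2}{3}(1+t)^{3/2},t,\mathbf{\hat{x}}\right)$ (Theorem~\ref{thm:H-Global Minkowski} with the Nash map $\Phi=\mathrm{id}$) through the quotient map, and secures injectivity via the transversality machinery of Proposition~\ref{prop:embedding transverse to the orbital foliation}, Lemma~\ref{lem:orbit-tangency-H-global} and Corollary~\ref{cor:injectivity-by-lemma-orbit-tangency}, together with the explicit computation in the Appendix. You instead design a fresh embedding adapted to the quotient from the outset: working in Rindler coordinates on the wedge, you prescribe a strictly monotone, boost-invariant radius $\rho(t)=1+e^{-t}$ and solve for $\theta(t)$ from the isometry condition, which is legitimate because $e^{-2t}+t\geq\tfrac{1}{2}(1+\ln 2)>0$ for all real $t$, so $\theta'$ is smooth and real. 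The payoff is that injectivity in the quotient collapses to a one-line argument: every element of $\Gamma$ preserves $\rho=\sqrt{(y^{1})^{2}-\tau^{2}}$ and fixes the spectator coordinates, so strict monotonicity of $\rho$ excludes any $\theta$-winding, with no tangency dichotomy needed; note also that your appeal to Proposition~\ref{prop:embedding transverse to the orbital foliation} for the immersion property is superfluous, since $\pi$ is a local diffeomorphism and composition with an immersion is automatically an immersion. A genuine additional advantage of your construction is that the profile $\rho(t)=1+e^{-t}$ is defined and smooth for \emph{all} $t\in\mathbb{R}$, whereas the temporal function $-\tfrac{2}{3}(1+t)^{3/2}$ used in the paper's route is only real for $t\geq-1$, so your embedding is more transparently global in $t$. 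One small correction: your identification $\mathcal{M}_{\mathrm{Misner}}=\mathcal{W}/\Gamma$ with $\mathcal{W}=\{y^{1}>|\tau|\}$ does not match the paper's Definition~\ref{def:Misner region}, where Misner space is $\mathcal{R}/\Gamma$ with the strictly larger half-space $\mathcal{R}=\{y^{1}-\tau>0\}$; your wedge quotient is only the $T>0$ portion of Misner space. Since $\mathcal{W}$ is a $\Gamma$-invariant open subset of $\mathcal{R}$, the quotient $\mathcal{W}/\Gamma$ embeds isometrically as an open subset of $\mathcal{M}_{\mathrm{Misner}}$ (two $\mathcal{W}$-points identified in $\mathcal{R}/\Gamma$ are already identified in $\mathcal{W}/\Gamma$), so composing with this inclusion yields the claimed global isometric embedding; your conclusion stands after this harmless adjustment.
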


Misner space, a well-known solution to Einstein's equations, has a
Killing symmetry that shifts from timelike to spacelike across a compact
Cauchy horizon. Our approach enables the construction of an extended
Kaluza-Klein theory wherein the effective base space exhibits a change
in signature type, even though the higher-dimensional Misner metric
retains its Lorentzian signature everywhere. This distinct scenario
thus realizes what we call a \textquotedblleft signature change without
signature change\textquotedblright .\footnote{This situation may be described as a case of \textquotedblleft signature change without signature change\textquotedblright, in analogy with John Wheeler's notions of \textquotedblleft mass without mass\textquotedblright\ and \textquotedblleft charge without charge\textquotedblright\ in describing black holes. Here, the signature change is an effective property of the lower-dimensional manifold, rather than a fundamental property of the higher-dimensional space.}

\section{\textbf{Signature-type Changing Braneworlds\label{sec:Signature-type-Changing-Braneworlds}}}

A singular semi-Riemannian manifold with signature-type change is
a mathematical structure in differential geometry where the metric
tensor (of arbitrary signature) becomes degenerate and undergoes a
transition in its signature. Initially, the manifold has a Riemannian
metric, but it smoothly transitions to a semi-Riemannian metric, which
includes both positive and negative eigenvalues. These manifolds are
termed \textquotedblleft singular\textquotedblright{} because the
signature change involves regions where the metric is non-invertible
or exhibits divergent behavior, resulting in mild singularities in
the classical sense~\cite{Ellis - Classical change of signature}.
Such manifolds are used to model scenarios in which the nature of
spacetime changes, such as cosmological models that transition from
Riemannian to Lorentzian metrics. For a detailed discussion on signature-type
changing manifolds, we refer the reader to~\cite{Ellis - Classical change of signature,Hasse + Rieger-Transformation,Hasse + Rieger-Loops}. 
\begin{defn}
\label{def:transverse_type_changing_general} A smooth manifold $\tilde{M}$
equipped with a smooth $(0,2)$--tensor field $\tilde{g}$ is called
a \emph{transverse type-changing semi-Riemannian manifold} if $\tilde{g}$
becomes degenerate on a smoothly embedded hypersurface $\mathcal{H}\subset\tilde{M}$,
and for every point $q\in\mathcal{H}$, $d(\textrm{det}([\tilde{g}_{\mu\nu}]))_{q}\neq0$.
The hypersurface $\mathcal{H}$ is the surface of signature change.
\end{defn}

In cosmological applications, a spacelike surface of signature change
is the natural and preferred choice. Thus, our focus is specifically
directed toward the concept of a transverse radical, which ensures
such a spacelike hypersurface of signature change. For a detailed
discussion of the causal character of these hypersurfaces, we refer
the reader to~\cite{Rieger Diss}. For this class of manifolds,
we first establish the following general existence result, which provides
a foundational geometric framework for developing an extended braneworld
model that incorporates signature change. Let's restate the Proposition~\ref{prop:pseudo-Euclidean embedding}
for convenience.
\begin{prop*}
Let $(\tilde{M},\tilde{g})$ be an $(1+n)$--dimensional transverse
type-changing manifold with a transverse radical. Then, there exists
an isometric embedding of $(\tilde{M},\tilde{g})$ in a $(1+n+d)$--dimensional,
non-singular pseudo-Euclidean manifold \textup{$(M,g)$}.
\end{prop*}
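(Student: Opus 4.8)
The plan is to reduce this existence statement to the classical isometric embedding theorems for nondegenerate (Riemannian and pseudo-Riemannian) metrics by exploiting the structure theory already established in the excerpt. The central difficulty is that $\tilde{g}$ is degenerate precisely on $\mathcal{H}$, so none of the standard embedding theorems (Nash for Riemannian, Clarke/Greene for pseudo-Riemannian) apply directly to $(\tilde{M},\tilde{g})$ as a whole. The key observation that makes the problem tractable is Lemma~\ref{lem:LC-regular}: the manifold is LC-regular, and in radical-adapted Gauss-like coordinates the metric takes the explicit normal form $\tilde{g}=-t\,(dt)^2+g_{ij}(t,\hat{\mathbf{x}})\,dx^i dx^j$. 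This normal form shows that the degeneracy is of a very mild, controlled type — a single eigenvalue vanishing linearly in the transverse coordinate $t$ — which is exactly the obstruction one must absorb into an extra embedding dimension.

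\textbf{Step 1 (Isolate the degenerate direction).} First I would use Definition~\ref{def:Radical-Adapted-Gauss-Like-Coordinates} to write the metric globally (on the $\mathcal{H}$-global neighborhood, or by the transverse-radical structure throughout) in the split form $\tilde{g}=-t\,(dt)^2+h_t$, where $h_t:=g_{ij}(t,\hat{\mathbf{x}})\,dx^i dx^j$ is, by the Remark following Corollary~\ref{cor: Time Function Foliation}, a smoothly varying $1$-parameter family of genuinely Riemannian metrics on the $(n-1)$-dimensional slices. The point is that the spatial block $h_t$ is nondegenerate and positive-definite everywhere, and only the $dt^2$ coefficient $-t$ changes sign and vanishes on $\mathcal{H}=\{t=0\}$.

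\textbf{Step 2 (Embed the nondegenerate part).} Next I would apply the Nash embedding theorem to the Riemannian family: treating $h_t$ as a smooth metric (parametrized by $t$) on the slices, one obtains a smooth isometric embedding $\Phi=(\Phi^1,\dots,\Phi^{d})$ of the spatial geometry into a Euclidean factor $\mathbb{R}^{d}$, so that $\sum_k d\Phi^k\otimes d\Phi^k$ reproduces $h_t$ together with any cross terms in $dt$. Concretely, I would build the embedding map into $\mathbb{R}^{1+n+d}$ componentwise, using one distinguished coordinate to handle the degenerate $-t\,(dt)^2$ term and the remaining Euclidean coordinates to realize the positive-definite spatial part via Nash.

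\textbf{Step 3 (Realize the degenerate term via a single adapted coordinate).} The heart of the construction is choosing one embedding function $\varphi(t)$ of the transverse coordinate alone so that $d\varphi\otimes d\varphi$, carrying the appropriate timelike or spacelike sign in the ambient pseudo-Euclidean signature $(q,p)$, reproduces $-t\,(dt)^2$. Since $(\varphi'(t))^2\,(dt)^2$ must equal $|t|\,(dt)^2$ in magnitude with the sign dictated by the ambient signature on both sides of $\mathcal{H}$, I would take an antiderivative of $\sqrt{|t|}$ — this is exactly the origin of the $\tfrac{2}{3}(1+t)^{3/2}$-type function appearing later in Proposition~\ref{prop:embedding transverse to the orbital foliation}. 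The main obstacle, and the step deserving the most care, is precisely the smoothness and isometry of this map across $\mathcal{H}$: the function $\sqrt{|t|}$ is not smooth at $t=0$, so one must verify that the \emph{squared} differential $d\varphi\otimes d\varphi$ nonetheless assembles into a $C^\infty$ ambient pullback matching $\tilde{g}$, and that the sign flip of the ambient contribution as $t$ crosses $0$ is correctly accommodated by allocating the extra $d$ dimensions with the right signature split $(q,p)$. Because we are free to choose the ambient signature and take $d$ as large as needed, there is enough room to absorb both the Riemannian slices (via Nash) and the single sign-changing transverse direction; once the pullback metric is checked to equal $\tilde{g}$ pointwise and the map is shown to be a smooth immersion that is injective (an embedding), the proposition follows.
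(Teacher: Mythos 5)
Your proposal takes a genuinely different route from the paper, and it contains real gaps. The paper's own proof of this proposition is abstract, not constructive: Lemma~\ref{lem:LC-regular} establishes LC-regularity, Proposition 4.9 of Bernig--Faifman--Solanes then embeds the LC-regular manifold $(\tilde{M},\tilde{g})$ isometrically into some \emph{nonsingular} pseudo-Riemannian manifold $(M',g')$, and Clarke's pseudo-Riemannian embedding theorem embeds $(M',g')$ into a flat $\mathbb{R}^{Q,P}$; the composition finishes the proof. Your construction instead mirrors the paper's later Theorem~\ref{thm:H-Global Minkowski} --- but note that theorem is proved only on the $\mathcal{H}$-global neighborhood $U$ of Definition~\ref{def:H-Global}. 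This is your first gap: the radical-adapted normal form of Definition~\ref{def:Radical-Adapted-Gauss-Like-Coordinates} is only guaranteed near $\mathcal{H}$, so your construction at best embeds $U$, whereas the proposition asserts an embedding of all of $(\tilde{M},\tilde{g})$. You would still need to handle the (nondegenerate, but possibly topologically and geometrically arbitrary) region away from $\mathcal{H}$ and glue --- which is precisely the work that the LC-regularity/Bernig/Clarke route does for free.

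The second gap is in your Step 3, and it is fatal as stated. A single ambient coordinate $\varphi(t)$ contributes $\epsilon\,(\varphi'(t))^{2}(dt)^{2}$ to the pullback with a \emph{fixed} sign $\epsilon=\pm1$ determined by the ambient signature; it therefore cannot reproduce the sign-changing coefficient $-t$, since $(\varphi')^{2}=-\epsilon\,t\geq0$ cannot hold on both sides of $\mathcal{H}$. Your antiderivative of $\sqrt{|t|}$ produces $\pm|t|\,(dt)^{2}$, not $-t\,(dt)^{2}$, and is moreover only $C^{1}$ at $t=0$ (its derivative $|t|^{1/2}$ is not differentiable there), so no verification of the squared differential rescues either the sign or the smoothness. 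The mechanism that actually works --- used in the paper's proof of Theorem~\ref{thm:H-Global Minkowski} --- is the algebraic offset $-t=1-(1+t)$: the $+(dt)^{2}$ piece is absorbed into the Riemannian metric $h=(dt)^{2}+g_{ij}\,dx^{i}dx^{j}$, to which Nash is applied \emph{on the whole $n$-dimensional manifold}, while a single timelike coordinate satisfies $(f'(t))^{2}=1+t>0$, smooth across $t=0$, yielding $f(t)=-\tfrac{2}{3}(1+t)^{3/2}$; that, not $\sqrt{|t|}$, is the origin of the $(1+t)^{3/2}$ function. Relatedly, your Step 2 is also not sound as written: applying Nash slice-wise to the family $h_{t}$ gives $t$-dependent maps $\Phi(t,\cdot)$ whose differentials contain uncontrolled $\partial_{t}\Phi\,dt$ terms, producing spurious $(dt)^{2}$ and cross contributions that nothing in your argument matches against $\tilde{g}$; Nash must be applied to the single Riemannian manifold $(\,\cdot\,,h)$ so that these terms are accounted for exactly.
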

To prove Proposition~\ref{prop:pseudo-Euclidean embedding}, we introduce
the following auxiliary result.
\begin{lem}
\label{lem:Lightcone regular}Let $(\tilde{M},\tilde{g})$ be an $m$--dimensional
transverse signature type changing manifold with a transverse radical.
Then $(\tilde{M},\tilde{g})$ is LC (light-cone)-regular.
\end{lem}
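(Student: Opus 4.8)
The plan is to verify the regular-value condition of Definition~\ref{def:LC-regular} directly and pointwise. Writing $Q\in C^{\infty}(T\tilde M\setminus\underline 0)$ for the fibrewise quadratic function $Q(p,v):=\tilde g_p(v,v)$ (smooth because $\tilde g$ is a smooth $(0,2)$--tensor field), LC-regularity is precisely the assertion that the differential $dQ_{(p,v)}$ is nonzero at every $(p,v)$ with $v\neq 0$ and $Q(p,v)=0$. I would treat the non-degenerate locus $\tilde M\setminus\mathcal H$ and the degeneracy hypersurface $\mathcal H$ separately, since these exhaust the possibilities for the base point $p$.

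First, on $\tilde M\setminus\mathcal H$ the metric is non-degenerate, and here it suffices to differentiate $Q$ along the fibre of $T\tilde M$: the fibre derivative at $(p,v)$ is the covector $w\mapsto 2\tilde g_p(v,w)$, which is nonzero for every $v\neq 0$ precisely because $\tilde g_p$ is non-degenerate. Hence $dQ_{(p,v)}\neq 0$ already in the vertical directions, and the condition holds. (In the Riemannian region the set $\{Q=0,\ v\neq0\}$ is in fact empty, so nothing is to be checked; only the Lorentzian region contributes genuine null vectors.)

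The substantive case is $p\in\mathcal H$. Here I would pass to the radical-adapted Gauss-like coordinates $(t,\hat{\mathbf x})$ of Definition~\ref{def:Radical-Adapted-Gauss-Like-Coordinates}, in which $\tilde g=-t\,(dt)^2+g_{ij}(t,\hat{\mathbf x})\,dx^i dx^j$ and $\mathcal H=\{t=0\}$, and in which the induced slice metrics $g_{ij}(t,\hat{\mathbf x})$ are Riemannian for $t>0$ by Corollary~\ref{cor: Time Function Foliation} and remain non-degenerate on $\{t=0\}$ by the transversality hypothesis, hence positive definite on a neighbourhood of $\mathcal H$ by continuity. Writing a tangent vector as $v=(v^t,\vec v)$, the function reads $Q=-t\,(v^t)^2+g_{ij}v^iv^j$. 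At a null vector over $\mathcal H$ we have $t=0$, so $Q=g_{ij}(0,\hat{\mathbf x})v^iv^j=0$; positive-definiteness of $g_{ij}(0,\hat{\mathbf x})$ forces $\vec v=0$, and hence $v^t\neq 0$ because $v\neq 0$. Evaluating the base derivative of $Q$ in the $t$--direction at such a point gives $\partial_t Q=-(v^t)^2+\partial_t g_{ij}\,v^iv^j=-(v^t)^2\neq 0$, the second term vanishing since $\vec v=0$. Therefore $dQ$ contains the nonzero component $-(v^t)^2\,dt$ and is nonzero, completing the verification.

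The only real obstacle is the case over $\mathcal H$, and it hinges on two structural facts supplied by the hypotheses. The degeneracy must be confined to the radical (the $t$--direction), with the transverse part staying Riemannian; this is exactly the content of the transverse-radical normal form and of the foliation into Riemannian slices, and it is what guarantees that the null directions over $\mathcal H$ are purely radical. The transversality of the signature change---equivalently $d(\det[\tilde g_{\mu\nu}])_q\neq 0$, which in the normal form reduces to $\det[g_{ij}](0,\hat{\mathbf x})\neq 0$---is what makes the coefficient of $(dt)^2$ vanish to exactly first order in $t$, producing the nonvanishing $\partial_t Q=-(v^t)^2$. Once these two inputs are in place the computation is immediate, and together with the non-degenerate locus they yield that $0$ is a regular value of $Q$, i.e.\ that $(\tilde M,\tilde g)$ is LC-regular.
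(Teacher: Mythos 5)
Your proof is correct, and it takes a genuinely different route: a direct regular-value verification where the paper argues by contraposition. The paper shows that a critical point $(\bar p,\bar v)$ of the quadratic function would force $\bar v$ into the radical (hence $\bar p\in\mathcal H$), then---after the same positive-definiteness step that forces the spatial part of $\bar v$ to vanish---deduces from the vanishing horizontal derivative that $D_p\lambda_1(\bar p)=0$ and therefore $D_p(\det[\tilde g])(\bar p)=0$, contradicting the transversality condition $d(\det[\tilde g_{\mu\nu}])_q\neq 0$ of the definition. You instead work in the radical-adapted normal form of Definition~\ref{def:Radical-Adapted-Gauss-Like-Coordinates} from the outset, where transversality is already encoded in the exact first-order vanishing $g_{tt}=-t$, so the same computation reads off $\partial_t Q=-(v^t)^2\neq 0$ directly, with no detour through the determinant; you also treat the non-degenerate locus explicitly via the fibre derivative $w\mapsto 2\tilde g_p(v,w)$, which the paper handles only implicitly (fibre-criticality forcing $\bar p\in\mathcal H$). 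The mathematical core is shared---null vectors over $\mathcal H$ are purely radical because the spatial block is positive definite, and the radical eigenvalue vanishes to exactly first order---but each organization buys something: yours is shorter, self-contained, and cleanly case-split, while the paper's contrapositive makes visible that the operative hypothesis is the coordinate-free condition $d(\det[\tilde g])\neq 0$ rather than the specific normal form. One detail where your write-up is actually more careful than the paper's: you justify positive definiteness of $g_{ij}(0,\hat{\mathbf x})$ (non-degeneracy from transversality, since $\det[\tilde g]=-t\det[g_{ij}]$ gives $d(\det[\tilde g])|_{t=0}=-\det[g_{ij}]\,dt$, combined with semi-definiteness by continuity from the nearby Riemannian slices), whereas the paper simply asserts that the spatial block $\tilde G$ is Riemannian.
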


\begin{proof}
Recall that, for a $(0,2)$--tensor field $\tilde{g}$, we define
the associated quadratic form as the function, $Q_{\tilde{g}}:T\tilde{M}\longrightarrow\mathbb{R}$,
given by $Q_{\tilde{g}}(p,v)=\tilde{g}_{p}(v,v)$ for all $(p,v)\in T\tilde{M}$. 

~\linebreak{}
First, by Definition~\cite{Bernig}, a metric $\tilde{g}$ of changing
signature on a smooth manifold $\tilde{M}$ is LC (light cone)-regular
if $0$ is a regular value of $\tilde{g}\in C^{\infty}(T\tilde{M}\setminus\underline{0})$.\footnote{Here the image of the zero section in $T\tilde{M}$ is denoted by
$\underline{0}$.} In other words, consider the smooth map $G:=Q_{\tilde{g}}\mid_{T\tilde{M}\setminus\{0\}}:T\tilde{M}\setminus\{0\}\longrightarrow\mathbb{R}$,
given by $(p,v)\longmapsto\tilde{g}_{p}(v,v)$. The metric $\tilde{g}$
is LC-regular if $0$ is a regular value of $G$, meaning that for
all $(p,v)$ satisfying $G(p,v)=\tilde{g}_{p}(v,v)=0$, the differential
$D_{(p,v)}G$ is nonzero (i.e., surjective).

\textbf{~}\\ We aim to prove the following by contradiction: If $(\tilde{M},\tilde{g})$
is not LC-regular, then $(\tilde{M},\tilde{g})$ is not transverse
type-changing, which is equivalent to stating that if $\bar{p},\bar{v}\neq0$
are critical points for $G$ , then $\bar{p}$ is a critical point
for $\det([\tilde{g}])$ and $\bar{p}\in\mathcal{H}$. Specifically,
this means: If $(\bar{p},\bar{v})$ is a critical point for $G$
, then $\bar{v}$ satisfies the null condition 
\[
Q_{\tilde{g}}\mid_{T\tilde{M}\setminus\{0\}}(\bar{p},\bar{v})=\tilde{g}_{ij}(\bar{p})\bar{v}^{j}=0.
\]
First, we consider the derivative of the quadratic form $Q_{\tilde{g}}\mid_{T\tilde{M}\setminus\{0\}}$
associated with the metric along $\bar{v}$ in the fiber directions.
Let $\tilde{g}(\bar{p})(\bar{v},\bar{v})$ be the metric applied to
a vector $\bar{v}$ at a point $\bar{p}$. Its derivative with respect
to a tangent vector $\bar{v}$ satisfies:

\[
D_{\bar{v}}G(\bar{p},\bar{v})=\frac{\partial}{\partial v_{i}}(\tilde{g}_{\bar{p}}(\bar{v},\bar{v}))=2\tilde{g}_{ij}(\bar{p})\bar{v}^{j}.
\]
This derivative is zero if and only if $\bar{v}$ is an element of
the kernel of the metric,i.e., $\bar{v}\in\text{ker}(\tilde{g}(\bar{p}))\setminus\{0\}$,
which is the radical of the metric at point $p$, denoted $\text{Rad}_{\bar{p}}\setminus\{0\}$.
This implies that $\bar{p}\in\mathcal{H}$, since the kernel of the
matrix representation of $\tilde{g}(\bar{p})$ contains a non-trivial
vector. Consequently, the determinant of the metric vanishes at this
point, i.e., $\det([\tilde{g}])(\bar{p})=0$.

\textbf{~}\\ If $G(\bar{p},\bar{v})=\tilde{g}_{ij}(\bar{p})dx^{i}(\bar{v})dx^{j}(\bar{v})=\tilde{g}_{ij}(\bar{p})\bar{v}^{i}\bar{v}^{j}=0$,
and the horizontal differential of $G$ with respect to the base point
$p$, evaluated at $(\bar{p},\bar{v})$, satisfies 
\[
\frac{\partial G}{\partial p_{k}}=\partial_{p_{k}}(\tilde{g}_{ij}(\bar{p})\bar{v}^{i}\bar{v}^{j})=0,
\]
then the differential of the determinant of the metric tensor $\tilde{g}_{ij}(p)$
with respect to the base point $p$, evaluated at $\bar{p}$, also
vanishes, i.e., 
\[
D_{p}(\det([\tilde{g}]))(\bar{p})=0
\]
 with $\textrm{\textrm{sign}}(\tilde{g})=(1,m-1)$.

\textbf{~}\\ Now, we examine the $p$--derivatives. From the definition,
we have 

\begin{equation}
(D_{p}\tilde{g})(\bar{p})(\bar{v},\bar{v})=\frac{\partial\tilde{g}_{ij}}{\partial p^{k}}(\bar{p})\bar{v}^{i}\bar{v}^{j}=0.\label{eq:p-derivatives}
\end{equation}
Let the matrix representation of the metric $\tilde{g}$ in radical-adapted
Gauss-like coordinates (Definition~\ref{def:Radical-Adapted-Gauss-Like-Coordinates})
be given by the block matrix
\[
[\tilde{g}_{p}]=\left(\begin{array}{cc}
\lambda_{1} & 0\\
0 & \tilde{G}(p)
\end{array}\right),
\]
\textbf{~}\\ where $\tilde{G}$ is the $(m-1)\times(m-1)$ submatrix
representing the purely spatial part of the metric. If we assume that
at a point $\bar{p}$ we have $\lambda_{1}(\bar{p})=0$, then for
a vector $\bar{v}=(v_{1},\hat{v})$, we can compute the square of
its norm:

\[
\tilde{g}_{\bar{p}}(\bar{v},\bar{v})=\lambda_{1}(\bar{p})(v^{1})^{2}+\tilde{G}(\bar{p})(\hat{v},\hat{v}).
\]
With the assumption $\lambda_{1}(\bar{p})=0$, the condition $\tilde{g}_{\bar{p}}(\bar{v},\bar{v})=0$
then implies $\tilde{G}(\bar{p})(\hat{v},\hat{v})=0$. Since $\tilde{G}$
is Riemannian metric, it is positive definite. Therefore, this condition
holds if and only if $\hat{v}=0$ for the vector $\hat{v}$. From
Equation~\ref{eq:p-derivatives} we obtain 

~
\[
D_{p}\lambda_{1}(\bar{p})\underset{\neq0}{\underbrace{\bar{v}_{1}^{2}}}+D_{p}\hat{g}(\hat{v},\hat{v})=0\Longrightarrow D_{p}\lambda_{1}(\bar{p})=0.
\]
Consequently, we have

\[
D_{p}(\det([\tilde{g}]))(\bar{p})=D_{p}(\det(\lambda_{1})\det([\tilde{G}]))(\bar{p})=\underset{=0}{\underbrace{D_{p}\lambda_{1}(\bar{p})}}\det(\tilde{G})(\bar{p})+\underset{=0}{\underbrace{\lambda_{1}(\bar{p})}}D_{p}(\det(\tilde{G})(\bar{p}))=0
\]
for $\bar{p}\in\mathcal{H}$. 
\end{proof}
Finally, this allows us to prove Proposition~\ref{prop:pseudo-Euclidean embedding}:
\begin{proof}
Let $(\tilde{M},\tilde{g})$ be an $m$--dimensional transverse signature-type
changing manifold. By Lemma~\ref{lem:Lightcone regular}, it follows
immediately that $(\tilde{M},\tilde{g})$ is LC (light-cone)-regular.
According to Proposition 4.9. in~\cite{Bernig}, the LC-regularity
of $(\tilde{M},\tilde{g})$, ensures the existence of a non-singular
pseudo-Riemannian manifold $(M',g')$ of specific dimension $(D)$
and signature $(q,p)$ into which $(\tilde{M},\tilde{g})$ can be
isometrically embedded. Let $\phi_{1}:\tilde{M}\hookrightarrow M'$
denote this isometric embedding. Furthermore, by the pseudo-Riemannian
Nash-type embedding theorem~\cite{Clarke. On the global isometric embedding of pseudo-Riemannian manifolds},
any non-singular pseudo-Riemannian manifold $(M',g')$ of dimension
$D$ and signature $(q,p)$ admits a global isometric embedding into
a pseudo-Euclidean space $\mathbb{R}^{Q,P}$ (i.e., a flat space with
a constant metric of signature $(Q,P)$) for sufficiently large $P$
and $Q$. Let $\phi_{2}:M'\hookrightarrow\mathbb{R}^{Q,P}$ denote
this second isometric embedding. The exact values of $P$ and $Q$
depend on $D,p,q$ and the specific version of the embedding theorem
used. Finally, the composition of two isometric embeddings is itself
an isometric embedding. Thus, the composite map $\Phi=\phi_{2}\circ\phi_{1}:\tilde{M}\hookrightarrow\mathbb{R}^{Q,P}$
provides an isometric embedding of $(\tilde{M},\tilde{g})$ into the
$(P+Q)$--dimensional pseudo-Euclidean space $\mathbb{R}^{Q,P}$.
This completes the proof. 
\end{proof}
\begin{rem}
\label{rem:geometric interpretation braneworlds} It is important
to emphasize that this framework represents a specific geometric interpretation
rather than a general equivalence to established braneworld theories.
This distinction is particularly significant, as we are no longer
dealing with classical spacetimes, but rather with signature-type
changing manifolds serving as branes, and pseudo-Euclidean manifolds
that, while serving as the bulk, are not necessarily Lorentzian. These
bulk manifolds are treated as solutions to the higher-dimensional
vacuum Einstein field equations. If the bulk has a non-degenerate
$(q,p)$ metric with $q>1$, then this framework is mathematically
possible but raises profound physical questions:\footnote{Mathematically, this imposes constraints on how the brane is embedded
in the bulk: out of the $q$ negative eigenvalues, only one can align
with the brane's intrinsic time direction.} A non-Lorentzian bulk of this kind would be $q$--temporal rather
than strictly Lorentzian, introducing interesting yet potentially
problematic physical consequences. The presence of multiple timelike
directions could lead to causality violations, such as closed timelike
curves (CTCs) or indefinite causal ordering, as well as issues related
to dynamical stability. Fields propagating through such a bulk may
be influenced by the additional timelike dimensions, resulting in
challenges with predictability and the well-posedness of initial value
problems. To make this scenario physically meaningful, a consistent
mechanism would be required to mitigate these difficulties, such as
imposing constraints on field propagation within the bulk or defining
specific interactions between the brane and the extra timelike dimensions.
Remarkably, recent studies~\cite{3 + 2 cosmology: Unifying FRW metrics in the bulk}
have explored models in which a spatially flat FRW metric is embedded
in a bulk with a $(2,3)$ signature (i.e., possessing two timelike
dimensions), yielding a unique bulk solution referred to as the \textquotedblleft M-metric\textquotedblright .
\end{rem}

\begin{example}
Consider the classic example of a spacetime $\tilde{M}$ with signature-type
change, obtained by cutting an $S^{4}$ (a Riemannian manifold) along
its equator and smoothly joining it to the corresponding half of a
de Sitter space (a Lorentzian manifold). The signature changes from
Euclidean to Lorentzian across the equator where $t=0$. This is the
universe model that satisfies the Hartle and Hawking \textquotedblleft no
boundary\textquotedblright{} condition~\cite{Hartle Hawking - Wave function of the Universe}
and is a $(3+1)$--dimensional transverse signature-type changing
manifold with a transverse radical. Therefore, according to Proposition~\ref{prop:pseudo-Euclidean embedding},
the manifold $\tilde{M}$ can be isometrically embedded in a $(4+d)$--dimensional,
non-singular pseudo-Euclidean $(M,g)$, thereby realizing a braneworld
scenario for the \textquotedblleft no boundary\textquotedblright{}
model.
\end{example}

\section{$\mathcal{H}$--Global Isometric Embedding}

Building upon the general existence of an isomteirc embedding (Proposition~\ref{prop:pseudo-Euclidean embedding}),
our primary contribution is the demonstration of a significantly stronger
result: the existence of an $\mathcal{H}$--global isometric embedding.
We introduce this notion and prove its existence, offering a more
constrained and powerful form of embedding.

\subsection{$\mathcal{H}$--Global Isometric Embedding into Minkowski Spacetime}

While Section~\ref{sec:Signature-type-Changing-Braneworlds} demonstrated
the existence of a general isometric embedding into a pseudo-Euclidean
manifold, we now strengthen this result by proving with Theorem~\ref{thm:H-Global Minkowski}
the existence of an $\mathcal{H}$--global isometric embedding into
Minkowski spacetime. This constitutes an important special case within
the class of pseudo-Euclidean manifolds.
\begin{thm*}
Let $(\tilde{M},\tilde{g})$ be an $n$--dimensional transverse signature-type
changing manifold with a transverse radical, and fix an $\mathcal{H}$--global
neighborhood $U=\bigcup_{q\in\mathcal{H}}U(q)\subset\tilde{M}$. Then
there exists a sufficiently large integer $N$ and an $\mathcal{H}$--global
isometric embedding $\psi:U\to\mathbb{R}^{1,N-1}$ of $U$ (with the
induced metric $\tilde{g}|_{U}$) into Minkowski spacetime $(\mathbb{R}^{1,N-1},\eta)$.
\end{thm*}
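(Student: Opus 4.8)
The plan is to convert the degenerate, signature-changing embedding problem into a genuine Riemannian Nash embedding problem by splitting off the degenerate normal direction and absorbing its ``excess'' into an auxiliary positive-definite metric. Working inside the fixed $\mathcal{H}$--global neighborhood $U$ (Assumption~\ref{assu:Assumption}), I would first use Definition~\ref{def:Radical-Adapted-Gauss-Like-Coordinates} together with Corollary~\ref{cor: Time Function Foliation} to record the canonical form $\tilde{g} = -t\,(dt)^2 + g_t$, where $t=\mathfrak{h}$ is the global absolute time function and $g_t := g_{ij}(t,\hat{\mathbf{x}})\,dx^i dx^j$ is, for each fixed $t$, a positive-definite metric on the corresponding $(n-1)$--dimensional leaf. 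The degeneracy of $\tilde{g}$ is confined entirely to the normal direction and occurs precisely on $\mathcal{H}=\{t=0\}$, where the coefficient $-t$ vanishes.

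The central observation is the pointwise identity $-t\,(dt)^2 = -(1+t)\,(dt)^2 + (dt)^2$, which suggests introducing the auxiliary symmetric $(0,2)$--tensor $\bar{g} := (dt)^2 + g_t$ on $U$. Since the leaves $(\,\cdot\,,g_t)$ are Riemannian by Corollary~\ref{cor: Time Function Foliation} and $(dt)^2$ supplies positivity in the complementary normal direction, $\bar{g}$ is a genuine smooth Riemannian metric on the $n$--dimensional manifold $U$; here I would use the global consistency of $\mathfrak{h}$ to ensure $\bar{g}$ is well defined across the overlapping charts $U(q)$. I would then apply the Nash isometric embedding theorem~\cite{Nash} to $(U,\bar{g})$ to obtain, for $N-1$ sufficiently large, a smooth isometric embedding $\Phi=(\Phi^1,\dots,\Phi^{N-1}):(U,\bar{g})\hookrightarrow\mathbb{R}^{N-1}$ with $\Phi^*\delta = \bar{g}$, where $\delta$ is the Euclidean metric.

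Next I would assemble the candidate map of Proposition~\ref{prop:embedding transverse to the orbital foliation}, namely $\psi(t,\hat{\mathbf{x}}) := \bigl(-\tfrac{2}{3}(1+t)^{3/2},\,\Phi^1(t,\hat{\mathbf{x}}),\dots,\Phi^{N-1}(t,\hat{\mathbf{x}})\bigr)$, which is defined wherever $1+t>0$; as $U$ is a neighborhood of $\mathcal{H}=\{t=0\}$ this holds on $U\subseteq\{t>-1\}$. Writing $\tau(t):=-\tfrac{2}{3}(1+t)^{3/2}$, one computes $d\tau = -(1+t)^{1/2}\,dt$, so $(d\tau)^2=(1+t)\,(dt)^2$. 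Pulling back the Minkowski metric $\eta=-(d\tau)^2+\sum_j(dy^j)^2$ then yields $\psi^*\eta = -(1+t)(dt)^2 + \Phi^*\delta = -(1+t)(dt)^2 + (dt)^2 + g_t = -t\,(dt)^2 + g_t = \tilde{g}$, establishing the isometry. Since $\Phi$ is already an embedding and $\psi$ merely appends the single coordinate $\tau(t)$, the map $\psi$ is itself an embedding; it is defined on all of $U$ and respects the time foliation, hence is $\mathcal{H}$--global in the sense of Definition~\ref{def:H-Global}.

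The step I expect to require the most care is the behaviour at $\mathcal{H}$: although $\psi^*\eta$ degenerates along $\{t=0\}$, the map $\psi$ must remain a bona fide smooth immersion there. This is exactly what the reparametrization $\tau(t)=-\tfrac{2}{3}(1+t)^{3/2}$ secures, since $\tau'(0)=-1\neq 0$ keeps $d\psi(\partial_t)=(\tau'(t),\,\partial_t\Phi)$ nonzero and transverse to the leaf directions even where the induced metric collapses. The remaining technical obstacle is the global (rather than chart-by-chart) character of the construction: one must verify that $\bar{g}$ is globally well defined on $U=\bigcup_{q}U(q)$ and that Nash's theorem is invoked in its form valid for the (possibly non-compact) manifold $(U,\bar{g})$. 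The elegance of the approach is that embedding the full $n$--dimensional $(U,\bar{g})$ at once, rather than embedding each leaf separately and gluing, automatically delivers the smooth $t$--dependence needed for $\psi$ to be smooth and isometric across $\mathcal{H}$.
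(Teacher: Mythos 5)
Your proposal is correct and follows essentially the same route as the paper's proof: the paper likewise splits $\tilde{g}$ via $-t(dt)^2 = -(1+t)(dt)^2 + (dt)^2$ into the auxiliary Riemannian metric $h = (dt)^2 + g_{ij}\,dx^i dx^j$ (your $\bar{g}$), applies Nash to $(U,h)$ to get $\Phi$, and appends a temporal coordinate $f(t)$, solving $1-(f'(t))^2 = -t$ to obtain $f(t) = -\tfrac{2}{3}(1+t)^{3/2}$ --- exactly the function you posit and verify directly. The only differences are presentational (the paper derives $f$ from the isometry condition rather than verifying the ansatz, and like you it implicitly restricts to $t>-1$ and relies on the global absolute time function $\mathfrak{h}$ to make the decomposition chart-independent), so there is nothing substantive to correct.
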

\begin{proof}
Since $(\tilde{M},\tilde{g})$ is an $n$--dimensional transverse
signature-type changing manifold with a transverse radical, by Definiton~\ref{def:Radical-Adapted-Gauss-Like-Coordinates}
and Definiton~\ref{def:H-Global}, there exists an $\mathcal{H}$--global
neighborhood $U=\bigcup_{q\in\mathcal{H}}U(q)\text{\ensuremath{\subset}}\tilde{M}$.
On each $U(q)$ we can define radical-adapted Gauss-like coordinates
$(t,\mathbf{\hat{x}}):=(t,x^{1},\ldots,x^{n-1})\in\mathbb{R}\times\mathbb{R}^{n-1}$
such that the metric tensor $\tilde{g}$ takes the canonical form
\[
\tilde{g}=-t(dt)^{2}+g_{ij}(t,x^{1},\ldots,x^{n-1})dx^{i}dx^{j},
\]
where the indices $i,j$ range from $1$ to $n-1$. \textbf{~}\\ Within
$U$, we can express the metric components of the matrix $[\tilde{g}_{\mu\nu}]$
in terms of two block matrices, $[f]$ and $[h]$, by algebraically
decomposing the $g_{tt}$ component:

~

 $[\tilde{g}_{\mu\nu}]=\left (\begin{array}{@{}c|ccc@{}}      -(t+1) & 0 & \cdots & 0   \\\hline     0 &  &  &   \\     \vdots &  & \mathbf{0} &  \\     0 &  &  &      \end{array}\right)
+ \left (\begin{array}{@{}c|ccc@{}}      1 & 0 & \cdots & 0   \\\hline     0 &  &  &   \\     \vdots &  & \tilde{G} &  \\     0 &  &  &      \end{array}\right)=[f]+[h].$

\textbf{~}

\textbf{~}\linebreak{}
Here, $\mathbf{0}$ denotes an $(n-1)\times(n-1)$ zero matrix, and
$\tilde{G}=[g_{ij}(t,x^{1},\ldots,x^{n-1})]$ is the $(n-1)\times(n-1)$
submatrix representing the purely spatial part of the metric. This
decomposition separates the $t$--dependent term and a constant offset
in the $g_{tt}$.

~

Due to the Nash embedding theorem for Riemannian manifolds~\cite{Nash},
for the $n$--dimensional Riemannian manifold $(\tilde{M},h)$, there
exists a number $N_{h}$ and an isometric embedding $\Phi:U\subseteq\tilde{M}\longrightarrow\mathbb{R}^{N_{h}}$,
such that for every point $p\in U$, the derivative $D\Phi_{p}$ is
a linear map from the tangent space $T_{p}\tilde{M}$ to $\mathbb{R}^{N_{h}}$,
which is compatible with the given inner product on $T_{p}\tilde{M}$
and the standard dot product of $\mathbb{R}^{N_{h}}$. That is, for
all tangent vectors $u,v\in T_{p}\tilde{M}$:

\[
h(u,v)=D\Phi_{p}(u)\cdot D\Phi_{p}(v).
\]
We choose the total dimension $N$ of the target Minkowski space $(\mathbb{R}^{1,N-1},\eta)$
such that its spatial dimension satisfies $N-1\geq N_{h}$. 

~

Let $X=\tau\partial_{t}+w^{i}\partial_{x^{i}}\in T_{(t,\hat{x})}\mathbb{R}\oplus\mathbb{R}^{N_{h}}$
be a tangent vector at a point $(t,\mathbf{\hat{x}})$ on $U\subseteq\tilde{M}$,
where $\tau\in\mathbb{R}$ is the component along $\partial_{t}$
and $\mathbf{\hat{w}}=(w^{1},\ldots,w^{n-1})$ represents the components
along the spatial coordinate directions $\partial_{x^{i}}$. The isometric
embedding property of $\Phi$ (applied to the Riemannian metric $h$)
implies that the squared norm of the pushforward of $X$ under $\Phi$
is given by the metric $h(X,X)$: $$ \|D\Phi_{(t,\mathbf{\hat{x}})}(X)\|_{\mathbb{R}^{N_{h}}}^2 = h(X,X).$$

\textbf{~}\\ Expanding $h(X,X)$ using the definition of the metric
\[
h=(dt)^{2}+g{ij}(t,x^{1},\dots,x^{n-1})dx^{i}dx^{j}
\]
 yields:

\[
h(X,X)=h(\tau\partial_{t}+w^{i}\partial_{x^{i}},\tau\partial_{t}+w^{j}\partial_{x^{j}})=\tau^{2}h(\partial_{t},\partial_{t})+2\tau w^{i}h(\partial_{t},\partial_{x^{i}})+w^{i}w^{j}h(\partial_{x^{i}},\partial_{x^{j}}).
\]
Since $h(\partial_{t},\partial_{t})=1$, $h(\partial_{t},\partial_{x^{i}})=0$,
and $h(\partial_{x^{i}},\partial_{x^{j}})=g_{ij}$, this simplifies
to:

\[
h(X,X)=\tau^{2}+\underset{g(t,\mathbf{\hat{x}})(w,w)}{\underbrace{g_{ij}(t,\mathbf{\hat{x}})w^{i}w^{j}}}.
\]
Here, $g_{ij}(t,\hat{x})w^{i}w^{j}$ denotes the quadratic form of
the spatial metric $\tilde{G}$ applied to the vector $\mathbf{\hat{w}}$.
Next, we define the embedding function $\psi:U\subseteq\tilde{M}\longrightarrow\mathbb{R}^{1,N_{h}}$
by:

\[
\psi(t,\mathbf{\hat{x}}):=(f(t),\Phi(t,\mathbf{\hat{x}}))=(f(t),\Phi^{1}(t,\mathbf{\hat{x}}),\ldots,\Phi^{N_{h}}(t,\mathbf{\hat{x}}))\in\mathbb{R}\oplus\mathbb{R}^{N_{h}},
\]
where $f:\mathbb{R\longrightarrow\mathbb{R}}$ is a smooth function
to be determined, and $\Phi:U\subseteq\tilde{M}\longrightarrow\mathbb{R}^{N_{h}}$
is the Nash embedding for the Riemannian metric $h$. Let $X=(\tau,w^{1},\text{\dots},w^{n\text{\textminus}1})=(\tau,\mathbf{\hat{w}})\in T_{(t,\mathbf{\hat{x}})}\mathbb{R}\oplus\mathbb{R}^{N_{h}}$
be a tangent vector on $U\subseteq\tilde{M}$ at $(t,\mathbf{\hat{x}})$.
We calculate the induced metric $\psi^{*}\eta$ by evaluating $\eta(D\psi(X),D\psi(X))$.
The pushforward of $X$ by $\psi$ is:

\[
D\psi(X)=(X(f(t)),X(\Phi^{1}(t,\mathbf{\hat{x}})),\ldots,X(\Phi^{N_{h}}(t,\mathbf{\hat{x}}))=(f'(t)\tau,D\Phi(X)).
\]
Now, we evaluate this vector with respect to the Minkowski metric
$\eta=-(dy^{0})^{2}+\sum_{\alpha=1}^{N-1}(dy^{\alpha})^{2}$ in the
target space (where $y^{0}$ corresponds to $f(t)$ and $y^{\alpha}$
for $\alpha>0$ corresponds to the components of $\Phi(t,\mathbf{\hat{x})}$):

~

$\psi^{\eta}(X,X)$

$=\eta(D\psi(X),D\psi(X))=\eta((f'(t)\tau,D\Phi(X)),(f'(t)\tau,D\Phi(X)))$

$=-(f'(t)\tau)^{2}+\left\Vert D\Phi(X)\right\Vert _{R^{N_{h}}}^{2}$

$=-(f'(t))^{2}\tau^{2}+h(X,X)\quad\text{(by the isometric property of }\Phi\text{ for metric }h)$

$=-(f'(t))^{2}\tau^{2}+(\tau^{2}+g_{ij}(t,\mathbf{\mathbf{\hat{x}}})w^{i}w^{j})\quad\text{(from previous calculation of }h(X,X))$

$=(1-(f'(t))^{2})\tau^{2}+g_{ij}(t,\mathbf{\hat{x}})w^{i}w^{j}$.

\textbf{~}\\ For $\psi$ to be an isometric embedding, the induced
metric $\psi^{*}\eta$ must match the original metric $\tilde{g}$
on $U\subseteq\tilde{M}$. That is, $\psi^{*}\eta(X,X)=\tilde{g}(X,X)$.
We know $\tilde{g}(X,X)=-t\tau^{2}+g_{ij}(t,\hat{x})w^{i}w^{j}$.
By comparing the coefficients of $\tau^{2}$ and $w^{i}w^{j}$, we
require $1-(f'(t))^{2}=-t$. This directly implies:

\[
\ensuremath{(f'(t))^{2}=1+t}.
\]
Taking the square root, we get $f'(t)=\pm\sqrt{1+t}$. Without loss
of generality, we can choose the negative sign for $f'(t)$, which
gives $f'(t)=-\sqrt{1+t}$. Integrating this, we find the function 

\[
f(t)=-\int\sqrt{1+t}dt=-\frac{2}{3}(1+t)^{\frac{3}{2}}+C,
\]
where $C$ is an integration constant. This constant can be set to
zero by choosing an appropriate origin for the time coordinate in
the embedding space. This condition determines the \textit{temporal function} $f(t)=-\frac{2}{3}(1+t)^{\frac{3}{2}}$,
which is \textit{strictly monotonic decreasing}. This choice of $f(t)$
yields the $\mathcal{H}$--global isometric embedding $\psi:U\subseteq\tilde{M}\longrightarrow\mathbb{R}^{1,N_{h}}$:

\begin{equation}
\psi(t,\mathbf{\hat{x}}):=\left(-\frac{2}{3}(1+t)^{\frac{3}{2}},\Phi(t,\mathbf{\hat{x}})\right)=\left(-\frac{2}{3}(1+t)^{\frac{3}{2}},\Phi^{1}(t,\mathbf{\hat{x}}),\ldots,\Phi^{N_{h}}(t,\mathbf{\hat{x}})\right)\in\mathbb{R}\oplus\mathbb{R}^{N_{h}}.\label{eq:H-Global Minskwoski embedding}
\end{equation}
This concludes the proof of the existence of the $\mathcal{H}$--global
isometric embedding of $(\tilde{M},\tilde{g})$ into Minkowski space.
\end{proof}

\subsection{$\mathcal{H}$--Global Isometric Embedding into Misner Space}

Next, we introduce an extended geometrical perspective on braneworld
theory by considering a signature-type changing manifold as the embedded
space and Misner space~\cite{Misner} as the higher-dimensional flat
space. The latter naturally possesses the topological structure $\mathbb{R}^{N}\times S^{1}$,
where one dimension is compact due to an identification.

\subsubsection{Misner Orbifold}

The Misner orbifold, also known as the Lorentzian orbifold $\mathbb{R}^{1,1}/\text{boost}$,
is a space that can be understood as Minkowski space modulo discrete
group actions. Formally, in arbitrary dimension, this is $\mathbb{R}^{1,N-1}/\Gamma$,
where $\Gamma\subset O(1,N-1)$ is a discrete subgroup of Lorentzian
isometries. This group $\Gamma$ is generated by the one-parameter
pure Lorentz transformation (hyperbolic rotation) $A:\mathbb{R}^{1,N-1}\longrightarrow\mathbb{R}^{1,N-1}$
given by:

\[
A(\tau,y^{1},y^{2},\ldots,y^{N-1})=(\tau\cosh(\pi)+y^{1}\sinh(\pi),\tau\sinh(\pi)+y^{1}\cosh(\pi),y^{2},\ldots,y^{N-1}).
\]
Consequently, the group $\Gamma$ consists of all integer powers of
$A$, i.e., $\Gamma=\{A^{n}\mid n\in\mathbb{Z}\}$. This means points
are identified according to the relation:

\[
(\tau,y^{1},y^{2},\ldots,y^{N-1})\sim(\tau\cosh(n\pi)+y^{1}\sinh(n\pi),\tau\sinh(n\pi)+y^{1}\cosh(n\pi),y^{2},\ldots,y^{N-1})
\]
for all $n\in\mathbb{Z}$. 

~

Specifically, the $N$--dimensional Misner orbifold is formed conceptually
from Minkowski spacetime $(\mathbb{R}^{1,N-1},\eta)$ by quotienting
by a discrete one-parameter subgroup of isometries. This group, denoted
by $\Gamma$, exclusively affects the $(\tau,y^{1})$--subspace and
is generated by a Lorentz boost matrix $A(\theta_{0})$ for some fixed
rapidity $\theta_{0}$: 

~
\[
A(\theta_{0})=\begin{pmatrix}\cosh\theta_{0} & \sinh\theta_{0} & 0 & \cdots & 0\\
\sinh\theta_{0} & \cosh\theta_{0} & 0 & \cdots & 0\\
0 & 0 & 1 & \cdots & 0\\
\vdots & \vdots & \vdots & \ddots & \vdots\\
0 & 0 & 0 & \cdots & 1
\end{pmatrix}
\]
\textbf{~}\\ The remaining spectator directions, $y^{2},\ldots,y^{N-1}$,
remain unaffected, which justifies the generalization to $N$ dimensions.
When Minkowski spacetime is subjected to this group action, it is
naturally foliated by the orbits of $\Gamma$. These orbits lie in
the $(\tau,y^{1})$--plane and are $1$--dimensional hyperbolas
of constant $s^{2}=-\tau^{2}+(y^{1})^{2}$. Thus, $(\mathbb{R}^{1,N-1},\eta)$
is foliated by these $1$--dimensional boost orbits in each $(\tau,y^{1})$--plane
slice at fixed values of $y^{2},\ldots,y^{N-1}$. If the entire Minkowski
spacetime were to be formally quotiented by $\Gamma$, this foliation
would descend to a singular foliation on the resulting orbifold, remaining
regular everywhere except at the fixed point (the origin). This induces
a stratified structure, with strata corresponding to different orbit
types. However, such a global quotient of the entire Minkowski spacetime
by $\Gamma$ yields a non-Hausdorff space. Having established the
properties of this orbital structure, we can now prove Proposition~\ref{prop:embedding transverse to the orbital foliation}.
\begin{prop*}
Let $(\tilde{M},\tilde{g})$ be an $n$--dimensional transverse signature-type
changing manifold with a transverse radical, with coordinates $(t,\mathbf{\hat{x}})=(x^{1},\text{\dots},x^{n\text{\textminus}1})$.
Let $\psi:U\subseteq\tilde{M}\longrightarrow\mathbb{R}^{1,N-1}$,
\[
\psi(t,\mathbf{\hat{x}}):=\left(-\frac{2}{3}(1+t)^{\frac{3}{2}},\Phi^{1}(t,\mathbf{\hat{x}}),\ldots,\Phi^{N-1}(t,\mathbf{\hat{x}})\right)
\]
be the embedding given in Theorem~\ref{thm:H-Global Minkowski}.
Assume the image of $\psi$ lies in the region 
\[
\mathcal{R}:=\{(\tau,y^{1},\dots,y^{N-1})\in\mathbb{R}^{1,N-1}\;|\;y^{1}-\tau>0\}.
\]
Then the image $\mathrm{Im}(\psi)$ is transverse to the orbital foliation
generated by the one-parameter subgroup of hyperbolic rotations $\Gamma\subset O(1,N-1)$
(acting in the $(\tau,y^{1})$--plane), provided that the spatial
tangent vectors of the embedding are non-zero.\footnote{Non-zero in the following sense: the spatial projections $\partial_{t}\Phi,\;\partial_{x^{1}}\Phi,\dots,\partial_{x^{\,n-1}}\Phi\in\mathbb{R}^{N-1}$
do not all lie in the one-dimensional subspace spanned by $e_{1}=(1,0,\dots,0)\in\mathbb{R}^{N-1}$.}
\end{prop*}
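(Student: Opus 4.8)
The plan is to prove the equivalent pointwise statement that the generator of the boost flow is nowhere tangent to the image. Let $K=y^{1}\partial_{\tau}+\tau\partial_{y^{1}}$ denote the Killing field generating the one-parameter subgroup in the $(\tau,y^{1})$-plane, whose integral curves are exactly the leaves of the orbital foliation; transversality of $\mathrm{Im}(\psi)$ to the foliation means precisely that $K_{p}\notin T_{p}\mathrm{Im}(\psi)$ for every $p=\psi(t,\mathbf{\hat{x}})$. First I would record the relevant vectors. With $f(t)=-\tfrac{2}{3}(1+t)^{3/2}$ and $f'(t)=-\sqrt{1+t}\neq0$, the tangent space $T_{p}\mathrm{Im}(\psi)$ is spanned by $D\psi(\partial_{t})=(f'(t),\partial_{t}\Phi)$ and $D\psi(\partial_{x^{i}})=(0,\partial_{x^{i}}\Phi)$, while in ambient coordinates $(\tau,y^{1},\dots,y^{N-1})$ the orbit direction is $K_{p}=(\Phi^{1},f(t),0,\dots,0)$. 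Two features are immediate: $K_{p}$ has vanishing components along $y^{2},\dots,y^{N-1}$, and the defining inequality of $\mathcal{R}$ gives $\Phi^{1}-f(t)=y^{1}-\tau>0$, so $K_{p}\neq0$ on all of $\mathcal{R}$.

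Next I would argue by contradiction, assuming $K_{p}=c^{0}D\psi(\partial_{t})+\sum_{i}c^{i}D\psi(\partial_{x^{i}})$ for (necessarily unique, since $\psi$ is an immersion) scalars $c^{\mu}$, and split this identity in $\mathbb{R}^{1,N-1}=\mathbb{R}\oplus\mathbb{R}^{N-1}$. The temporal slot gives $c^{0}f'(t)=\Phi^{1}$, hence $c^{0}=\Phi^{1}/f'(t)$, and the spatial slot gives the vector equation $c^{0}\partial_{t}\Phi+\sum_{i}c^{i}\partial_{x^{i}}\Phi=f(t)\,e_{1}$ in $\mathbb{R}^{N-1}$, with $e_{1}=(1,0,\dots,0)$.

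The key step is to use that $\Phi$ is a Nash isometric embedding of $(U,h)$ with $h=(dt)^{2}+g_{ij}dx^{i}dx^{j}$, so that the frame $\{\partial_{t}\Phi,\partial_{x^{i}}\Phi\}$ has Gram matrix $\mathrm{diag}(1,[g_{ij}])$ and is in particular linearly independent. Pairing the spatial equation with $\partial_{t}\Phi$ and with each $\partial_{x^{j}}\Phi$ in the Euclidean inner product, and invoking the isometry relations $\partial_{t}\Phi\cdot\partial_{t}\Phi=1$, $\partial_{t}\Phi\cdot\partial_{x^{i}}\Phi=0$, $\partial_{x^{i}}\Phi\cdot\partial_{x^{j}}\Phi=g_{ij}$, I would solve for the coefficients as $c^{0}=f(t)\,\partial_{t}\Phi^{1}$ and $c^{i}=f(t)\,g^{ij}\partial_{x^{j}}\Phi^{1}$. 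Matching this against $c^{0}=\Phi^{1}/f'(t)$ produces the scalar relation $\Phi^{1}=f(t)f'(t)\,\partial_{t}\Phi^{1}=\tfrac{2}{3}(1+t)^{2}\partial_{t}\Phi^{1}$; and requiring the spatial identity to hold exactly (not only after projection onto the frame) forces $f(t)e_{1}$ into the span of the frame, which after comparing squared lengths is equivalent to the unit-gradient condition $(\partial_{t}\Phi^{1})^{2}+g^{ij}\partial_{x^{i}}\Phi^{1}\partial_{x^{j}}\Phi^{1}=1$.

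The main obstacle, and the point at which the hypotheses are genuinely needed, is to rule out the simultaneous validity of these last two conditions. This is exactly where the non-degeneracy assumption enters: if $\partial_{t}\Phi,\partial_{x^{1}}\Phi,\dots,\partial_{x^{n-1}}\Phi$ do not all lie in $\mathrm{span}(e_{1})$, then $f(t)e_{1}$ cannot be absorbed into the frame in the degenerate way required, and together with $K_{p}\neq0$ on $\mathcal{R}$ this yields the contradiction, giving $K_{p}\notin T_{p}\mathrm{Im}(\psi)$. I expect this final incompatibility to be the delicate part of the write-up; I would present it intrinsically by observing that tangency of $K_{p}$ would force the component $\Phi^{1}$ to satisfy simultaneously the first-order relation $\Phi^{1}=\tfrac{2}{3}(1+t)^{2}\partial_{t}\Phi^{1}$ and the unit-gradient condition above, and then show that the transversality hypothesis on the spatial derivatives of $\Phi$ precludes this, completing the proof of transversality.
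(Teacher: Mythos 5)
Your reduction to showing that the Killing generator $K$ is nowhere tangent to $\mathrm{Im}(\psi)$ is exactly the paper's starting point, and your bookkeeping up to the two tangency conditions is correct and in fact more refined than the paper's argument: using the Gram relations $\partial_t\Phi\cdot\partial_t\Phi=1$, $\partial_t\Phi\cdot\partial_{x^i}\Phi=0$, $\partial_{x^i}\Phi\cdot\partial_{x^j}\Phi=g_{ij}$, you correctly characterize tangency at a point as the simultaneous validity of (i) $\Phi^1=f(t)f'(t)\,\partial_t\Phi^1=\tfrac{2}{3}(1+t)^2\partial_t\Phi^1$ and (ii) $e_1\in\operatorname{span}\{\partial_t\Phi,\partial_{x^1}\Phi,\dots,\partial_{x^{n-1}}\Phi\}$ (your unit-gradient identity). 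The paper proceeds differently: it projects the tangency equation only to the spatial factor $\mathbb{R}^{N-1}$, splits off the homogeneous subsystem in the components $A=2,\dots,N-1$, uses the footnote hypothesis to conclude $a=b_1=\cdots=b_{n-1}=0$, and then contradicts the $y^1$--component equation because $\tau=-\tfrac{2}{3}(1+t)^{3/2}\neq0$ for $t>-1$ on $\mathcal{R}$; the temporal component of $K$ is never used there.

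The genuine gap is the step you yourself flag and postpone: nothing in your proposal derives the claimed ``final incompatibility,'' and it does not in fact follow from the footnote hypothesis. Condition (ii) is perfectly compatible with the assumption that the spatial projections do not \emph{all} lie in $\operatorname{span}(e_1)$: take $\partial_t\Phi=e_1$ and $\partial_{x^i}\Phi\perp e_1$, which is precisely the situation of the paper's own toy model $\psi(t,x)=\bigl(f(t),t,x\bigr)$ treated in the Appendix. There (ii) holds identically, the footnote hypothesis is satisfied, and transversality is rescued only because condition (i) reduces to $2t^2+t+2=0$, whose discriminant is negative --- a fact specific to that particular embedding, not a consequence of the hypothesis. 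Since the Nash embedding $\Phi$ is far from unique, one could equally have $\Phi^1=t+c$ with $c$ chosen so that (i) acquires real solutions, so the contradiction you hope to extract from the stated assumptions is not available. To close the argument along your lines you would need the stronger pointwise hypothesis $e_1\notin\operatorname{span}\{\partial_t\Phi,\partial_{x^1}\Phi,\dots,\partial_{x^{n-1}}\Phi\}$ --- equivalently, that the homogeneous system in the components $A\ge2$ (the paper's system (3)) admits only the trivial solution --- which is what the paper's proof in effect invokes when it passes from the footnote condition to $a=b_i=0$ before contradicting the $y^1$--equation via $\tau\neq0$.
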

\begin{proof}
To demonstrate that the image of the embedding $\psi(t,\mathbf{\hat{x}})$
is transversal to the foliation by orbits in Minkowski space, we must
prove that, at every point of intersection, the tangent space of the
embedded manifold is not contained within the tangent space of the
orbit's leaf. 

The tangent space of the orbits is spanned by the Killing vector
field for the hyperbolic rotation. Recall that the Killing field generating
hyperbolic rotations in the $(\tau,y^{1})$--plane is 
\[
K\;=\;\tau\,\frac{\partial}{\partial y^{1}}+y^{1}\,\frac{\partial}{\partial\tau},
\]
 whose spatial part (projection to the $y$--coordinates) is $(\tau,0,\dots,0)\in\mathbb{R}^{N-1}$.

The tangent space $T_{\psi(p)}\mathrm{Im}(\psi)$ at a point $p=(t,\mathbf{\hat{x}})\in U\subseteq\tilde{M}$
is spanned by the partial derivatives
\[
\frac{\partial\psi}{\partial t}=\Big(-\sqrt{1+t},\;\partial_{t}\Phi^{1},\dots,\partial_{t}\Phi^{N-1}\Big),\qquad\frac{\partial\psi}{\partial x^{i}}=\Big(0,\;\partial_{x^{i}}\Phi^{1},\dots,\partial_{x^{i}}\Phi^{N-1}\Big),
\]
 for $i=1,\dots,n-1$. Here we use $\partial_{t}(-\tfrac{2}{3}(1+t)^{3/2})=-\sqrt{1+t}$
and $t>-1$ on $\mathcal{R}$.

To prove transversality it suffices to show that the Killing field
$K$ is never tangent to $\mathrm{Im}(\psi)$; equivalently, there
are no scalars $a,b_{1},\dots,b_{n-1}$ (depending on the point) such
that 
\[
K\;=\;a\,\frac{\partial\psi}{\partial t}\;+\;\sum_{i=1}^{n-1}b_{i}\,\frac{\partial\psi}{\partial x^{i}}.
\]
Project this vector equality to the spatial coordinates by the projection
$\pi:\mathbb{R}^{1,N-1}\to\mathbb{R}^{N-1}$. Since $\pi(K)=(\tau,0,\dots,0)$
and $\pi(\partial\psi/\partial t)=\partial_{t}\Phi$, $\pi(\partial\psi/\partial x^{i})=\partial_{x^{i}}\Phi$,
we obtain the linear system in $\mathbb{R}^{N-1}$ 
\[
(\tau,0,\dots,0)\;=\;a\,\partial_{t}\Phi\;+\;\sum_{i=1}^{n-1}b_{i}\,\partial_{x^{i}}\Phi.\tag{1}
\]
Write (1) componentwise. For the first spatial component ($y^{1}$)
we have 
\[
\tau\;=\;a\,\partial_{t}\Phi^{1}+\sum_{i=1}^{n-1}b_{i}\,\partial_{x^{i}}\Phi^{1},\tag{2}
\]
and for each remaining spatial component $A=2,\dots,N-1$, 
\[
0\;=\;a\,\partial_{t}\Phi^{A}+\sum_{i=1}^{n-1}b_{i}\,\partial_{x^{i}}\Phi^{A}.\tag{3}
\]

Consider the homogeneous linear system given by the $A\ge2$ equations
(3). This is a system of $N-2$ linear equations in the $n$ unknowns
$(a,b_{1},\dots,b_{n-1})$. By the hypothesis on the spatial tangents,
the set 
\[
\{\partial_{t}\Phi,\;\partial_{x^{1}}\Phi,\dots,\partial_{x^{\,n-1}}\Phi\}
\]
does not lie entirely in the $e_{1}$--direction; therefore their
components in indices $A=2,\dots,N-1$ are not all simultaneously
zero. Consequently the homogeneous system (3) forces 
\[
a=0,\qquad b_{1}=\cdots=b_{n-1}=0
\]
only in the case where the $A\ge2$ components of every generator
vanish. If, as in our hypothesis, not all spatial-projection vectors
are collinear with $e_{1}$, then (3) implies that the unique solution
is $a=b_{1}=\cdots=b_{n-1}=0$. But $a=b_{i}=0$ contradicts (2),
because $\tau=-\tfrac{2}{3}(1+t)^{3/2}\neq0$ for $t>-1$. Thus no
choice of scalars $a,b_{i}$ can make $K$ a linear combination of
the tangent vectors to $\mathrm{Im}(\psi)$ at any point in the region
$\mathcal{R}$.

~

Therefore $K$ is nowhere tangent to $\mathrm{Im}(\psi)$; equivalently
the tangent space $T_{\psi(p)}\mathrm{Im}(\psi)$ is never contained
in the tangent space to the orbit through $\psi(p)$. Hence $\mathrm{Im}(\psi)$
is transverse to the orbital foliation generated by $\Gamma$, as
claimed. 
\end{proof}

\begin{rem*}
The hypothesis that the spatial projections $\partial_{t}\Phi,\partial_{x^{i}}\Phi$
do not all lie in the span of $e_{1}$ can be replaced by the slightly
stronger but more explicit requirement that the $(N-2)\times n$ matrix
\[
\big(\partial_{t}\Phi^{A},\;\partial_{x^{1}}\Phi^{A},\dots,\partial_{x^{\,n-1}}\Phi^{A}\big)_{A=2,\dots,N-1}
\]
 have rank at least $1$ (i.e. not be the zero matrix). Under that
linear algebra condition the argument above is immediate. 
\end{rem*}

\begin{lem}
\label{lem:orbit-tangency-H-global} Let $\psi(U)\subset\mathcal{R}\subset\mathbb{R}^{1,N-1}$
be the embedded image of a fixed $\mathcal{H}$--global neighborhood
$U\subseteq\tilde{M}$, and let $\Gamma$ be the one-parameter boost
group acting on $\mathcal{R}$ with generator $K$. Fix an orbit 
\[
\mathcal{O}=\{\gamma_{s}(q):s\in\mathbb{R}\},
\]
which meets $\psi(U)$ in at least two distinct points $\gamma_{s_{1}}(q)$
and $\gamma_{s_{2}}(q)$ with $s_{1}<s_{2}$.

Assume the composed map 
\[
F_{q}:(s_{1},s_{2})\longrightarrow\mathbb{R},\quad F_{q}(s):=t\big(\psi^{-1}(\gamma_{s}(q))\big)
\]
is smooth on $(s_{1},s_{2})$ (this holds whenever $\psi^{-1}$ is
smooth on its image, which is true since $\psi$ is an embedding defined
on $U$). Then exactly one of the following holds:

\begin{enumerate}
\item  \(F_q\) is strictly monotone on \((s_1, s_2)\), or
\item  \(F_q\) attains an interior extremum at some \(s_* \in (s_1, s_2)\), i.e., \(F_q'(s_*) = 0\), which is equivalent to the statement that the generator \(K\) is tangent to the \(t\)--level set (and hence tangent to \(\psi(U)\)) at the point \(\gamma_{s_*}(q)\). 
\end{enumerate} 
\end{lem}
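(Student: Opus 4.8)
The plan is to reduce the claim to an elementary one--variable calculus dichotomy for the smooth function $F_q$ on $(s_1,s_2)$, together with a geometric translation that identifies a zero of $F_q'$ with tangency of the boost generator $K$ to the $t$--level set. The alternative I will actually verify is the tautological split ``$F_q'$ vanishes nowhere on $(s_1,s_2)$, or $F_q'$ vanishes at some interior point $s_*$'', which is plainly mutually exclusive and exhaustive; the first case yields strict monotonicity (alternative (1)) and the second is exactly the tangency condition (alternative (2)).

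First I would make the relevant curve explicit. Set $c(s):=\psi^{-1}(\gamma_s(q))\in U$; since $\psi$ is an embedding, $\psi^{-1}$ is smooth on $\psi(U)$, so $c$ is a smooth curve in $U$ with $F_q=t\circ c$, and the chain rule gives $F_q'(s)=(dt)_{c(s)}\big(c'(s)\big)$, i.e.\ $F_q'(s)$ is the $\partial_t$--component of $c'(s)$ in radical--adapted Gauss--like coordinates. Differentiating the identity $\psi\circ c(s)=\gamma_s(q)$ gives $D\psi_{c(s)}\big(c'(s)\big)=K_{\gamma_s(q)}$. Because the orbit segment lies in $\psi(U)$, its velocity $K$ is tangent to the image, and as $D\psi$ is injective we may write $c'(s)=\big(D\psi_{c(s)}\big)^{-1}K_{\gamma_s(q)}$, whence
\[
F_q'(s)=(dt)\Big(\big(D\psi_{c(s)}\big)^{-1}K_{\gamma_s(q)}\Big),
\]
the $\partial_t$--component of the $\psi$--pullback of $K$.

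Next I would read off the meaning of a zero. Since $dt$ annihilates precisely the spatial directions $\mathrm{span}\{\partial_{x^1},\dots,\partial_{x^{n-1}}\}$, one has $F_q'(s_*)=0$ if and only if $(D\psi)^{-1}K$ is purely spatial at $c(s_*)$, equivalently if and only if $K_{\gamma_{s_*}(q)}$ lies in $D\psi\big(T_{c(s_*)}\{t=\mathrm{const}\}\big)$, i.e.\ in the tangent space of the $t$--level hypersurface $\psi(\{t=F_q(s_*)\})$ at $\gamma_{s_*}(q)$. Tangency of $K$ to $\psi(U)$ itself is automatic, since the orbit lies in the image and $K$ is its velocity; the content of alternative (2) is therefore the sharper tangency to the level set, which is exactly the asserted equivalence.

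Finally I would assemble the dichotomy. As $F_q$ is smooth, $F_q'$ is continuous on $(s_1,s_2)$, so if $F_q'$ has no zero the intermediate value theorem forbids a sign change, $F_q'$ is sign--definite, and $F_q$ is strictly monotone with no interior critical point --- alternative (1); otherwise $F_q'(s_*)=0$ for some $s_*$, which by the preceding step is alternative (2). The two branches are separated by whether $F_q'$ vanishes, so exactly one holds. The main obstacle --- indeed the only nonroutine point --- is the geometric identification of the third paragraph, which rests on $\psi$ being an embedding so that $\big(D\psi\big)^{-1}$ is defined along the image; framing alternative (1) via the nonvanishing (and hence sign--definite) derivative is what keeps the two cases genuinely exclusive. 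I would also record that the tangency appearing in alternative (2) is precisely the phenomenon controlled by the transversality Proposition~\ref{prop:embedding transverse to the orbital foliation}, which is the result that later adjudicates which branch can occur.
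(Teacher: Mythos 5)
Your proof is correct and, at its core, runs along the same lines as the paper's: both hinge on the chain-rule identification $F_q'(s)=dt_{c(s)}\bigl((D\psi_{c(s)})^{-1}K_{\gamma_s(q)}\bigr)$ and on reading a zero of $F_q'$ as membership of the pulled-back generator in $\ker(dt)$, i.e., tangency of $K$ to the $t$--level set. Where you differ is the calculus dichotomy. The paper argues ``not strictly monotone $\Rightarrow$ interior local extremum $\Rightarrow$ critical point,'' invoking the standard fact that a continuous, non--strictly--monotone function on an interval attains an interior local extremum; this matches alternative (2) as literally phrased. You instead split on whether $F_q'$ vanishes somewhere, using sign-definiteness of a nowhere-vanishing continuous derivative to get strict monotonicity. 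Your split is cleanly exclusive, but note that it does not literally reproduce alternative (2) as an \emph{extremum}: a cubic-type critical point of a strictly monotone $F_q$ lands in your second branch while satisfying the lemma's first alternative. The lemma's own ``i.e.'' conflates critical points with extrema, so your reading is defensible, but to match the statement verbatim you should route through the extremum fact as the paper does --- an easy repair.

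On the credit side, you make two points more explicit than the paper: you derive $c'(s)=(D\psi_{c(s)})^{-1}K_{\gamma_s(q)}$ by differentiating the identity $\psi\circ c(s)=\gamma_s(q)$, justifying the invertibility via injectivity of $D\psi$; and you observe that tangency of $K$ to $\psi(U)$ is automatic all along the orbit segment (the very definition of $F_q$ on $(s_1,s_2)$ forces the segment into $\psi(U)$, whose velocity is $K$), so the substantive content of alternative (2) is tangency to the $t$--level set, not to $\psi(U)$ --- a sharper reading than the paper's parenthetical, and one that correctly identifies what the transversality proposition later has to exclude.
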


\begin{proof}
The function $F_{q}$ is smooth on the open interval $(s_{1},s_{2})$.
Suppose $F_{q}$ is not strictly monotone. Then, by standard results
in calculus, it must attain either a local maximum or a local minimum
at some interior point $s_{*}\in(s_{1},s_{2})$. At such an extremum,
we have 
\[
F_{q}'(s_{*})=0.
\]
By the definition of $F_{q}$ and the chain rule, 
\[
F_{q}'(s_{*})=dt_{p_{*}}\big((\psi^{-1})_{*}(K|_{\gamma_{s_{*}}(q)})\big),
\]
 where $p_{*}=\psi^{-1}(\gamma_{s_{*}}(q))$.

Thus, $F_{q}'(s_{*})=0$ exactly when the pushforward of the Killing
vector field $K$ under $\psi^{-1}$ at $\gamma_{s_{*}}(q)$ lies
in the kernel of the differential $dt_{p_{*}}$, i.e., 
\[
(\psi^{-1})_{*}(K)\in\ker(dt_{p_{*}}).
\]
Geometrically, this means that the infinitesimal generator $K$ of
the orbit is tangent to the level set of the absolute time function
$\mathfrak{h}(t,\mathbf{\hat{x}}):=t$ through the point $p_{*}$,
or equivalently, tangent to $\psi(U)$ at $\gamma_{s_{*}}(q)$. This
yields the claimed dichotomy.
\end{proof}

\begin{cor}
\label{cor:injectivity-by-lemma-orbit-tangency}Under the hypotheses
of Theorem~\ref{thm:H-Global Minkowski}, if for every orbit $\mathcal{O}$
that meets $\psi(U)$ in at least two points the corresponding function
$F_{q}$ is \emph{not} strictly monotone on the intervening interval,
then transversality (Proposition~\ref{prop:embedding transverse to the orbital foliation})
rules out the second alternative of Lemma~\ref{lem:orbit-tangency-H-global}.
Hence no orbit can meet $\psi(U)$ in two distinct points with different
$t$--values.
\end{cor}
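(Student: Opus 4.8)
The plan is to argue by contradiction, feeding the dichotomy of Lemma~\ref{lem:orbit-tangency-H-global} directly into the transversality statement of Proposition~\ref{prop:embedding transverse to the orbital foliation}. Concretely, I would suppose that some boost orbit $\mathcal{O}=\{\gamma_s(q)\}$ meets $\psi(U)$ in two distinct points $\gamma_{s_1}(q),\gamma_{s_2}(q)$ (with $s_1<s_2$) whose $t$--values differ, i.e. $F_q(s_1)\neq F_q(s_2)$, and then show that the standing hypothesis of the corollary together with transversality makes this impossible.

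First I would record that, because $\psi$ is an embedding defined on $U$, the composed function $F_q(s)=t(\psi^{-1}(\gamma_s(q)))$ is smooth on the intervening interval $(s_1,s_2)$, so the hypotheses of Lemma~\ref{lem:orbit-tangency-H-global} are met and its dichotomy applies. The corollary's standing hypothesis asserts that $F_q$ is \emph{not} strictly monotone on $(s_1,s_2)$, which negates the first alternative. Hence the second alternative must hold: $F_q$ attains an interior extremum at some $s_*\in(s_1,s_2)$ with $F_q'(s_*)=0$, which by the lemma is equivalent to the boost generator $K$ being tangent to the $t$--level set, and therefore tangent to $\psi(U)$, at the point $\gamma_{s_*}(q)$.

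The contradiction is then immediate. Since $\psi(U)\subset\mathcal{R}$ by the hypothesis of Theorem~\ref{thm:H-Global Minkowski} (and of Proposition~\ref{prop:embedding transverse to the orbital foliation}), the point $\gamma_{s_*}(q)$ lies in $\mathrm{Im}(\psi)\cap\mathcal{R}$, where Proposition~\ref{prop:embedding transverse to the orbital foliation} guarantees that $K$ is \emph{nowhere} tangent to $\mathrm{Im}(\psi)$; this contradicts the tangency produced in the previous step. Thus transversality excludes the second alternative of the lemma, the non--monotonicity hypothesis excludes the first, and no such orbit can exist, proving that no orbit meets $\psi(U)$ in two distinct points with different $t$--values. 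I would also remark that, read as a whole, the argument excludes \emph{both} alternatives simultaneously, so in fact no double--intersecting orbit survives; the restriction to ``different $t$--values'' is exactly the regime in which the non--monotonicity hypothesis is needed, since the equal--$t$ case ($F_q(s_1)=F_q(s_2)$) is forced into the second alternative directly by Rolle's theorem, independently of that hypothesis.

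The step I expect to require the most care is verifying that the interior extremum point $\gamma_{s_*}(q)$ genuinely lies on $\psi(U)$ within the region $\mathcal{R}$, so that Proposition~\ref{prop:embedding transverse to the orbital foliation} applies there. This hinges on the orbit arc between $\gamma_{s_1}(q)$ and $\gamma_{s_2}(q)$ remaining inside $\psi(U)$ --- precisely the standing assumption under which $F_q$ is defined and smooth on $(s_1,s_2)$ in Lemma~\ref{lem:orbit-tangency-H-global}. Once that domain bookkeeping is secured, the remainder is a purely formal chaining of the two cited results and requires no further computation.
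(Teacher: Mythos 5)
Your proof is correct and takes essentially the same approach the paper intends: the corollary is presented as an immediate chaining of Lemma~\ref{lem:orbit-tangency-H-global}'s dichotomy (first alternative excluded by the non-monotonicity hypothesis) with the transversality of Proposition~\ref{prop:embedding transverse to the orbital foliation} (excluding the second), which is exactly your contradiction argument. Your added bookkeeping---that the orbit arc must lie in $\psi(U)\subset\mathcal{R}$ so that $F_q$ is defined and the Proposition applies at $\gamma_{s_*}(q)$---matches the standing assumption of the lemma and the way the same argument is deployed in Case~(b) of the proof of Theorem~\ref{thm:H-Global Misner}.
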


\subsubsection{Misner Space}

To construct the actual, well-behaved Misner space (which is a Lorentzian
manifold), it must therefore be defined as the quotient of a specific,
restricted region of Minkowski spacetime:
\begin{defn}
\label{def:Misner region}Let $(\mathbb{R}^{1,N-1},\eta)$ denote
the $N$--dimensional Minkowski spacetime, endowed with coordinates
$(\tau,y^{1},y^{2},\ldots,y^{N-1})$ and the flat Lorentzian metric
$\eta_{\mu\nu}=\textrm{diag}(-1,1,\ldots,1)$. The region $\mathcal{R}\subset\mathbb{R}^{1,N-1}$
is defined as the open half-space: 
\[
\mathcal{R}:=\{(\tau,y^{1},y^{2},\ldots,y^{N-1})\in\mathbb{R}^{1,N-1}\mid y^{1}-\tau>0\}.
\]
\ 
\end{defn}

By restricting the base space to a suitable region $\mathcal{R}\subset\mathbb{R}^{1,N-1}$,
the group action $\Gamma$ becomes properly discontinuous and free
on that region, leading to a regular quotient manifold, known as Misner
space.
\begin{defn}
\label{Misner space}An $N$--dimensional Misner space is defined
as the quotient space $(\mathcal{R}/\Gamma,g)$, where $g$ is the
induced metric on the quotient space and $\text{\textgreek{G}\ensuremath{\subset}}O(1,N\text{\textminus}1)$
a discrete group of Lorentz isometries which acts on $\mathcal{R}$.
The action of $\Gamma$ on $\mathcal{R}$ is properly discontinuous
and free.
\end{defn}

Alternatively, Misner space can be described using coordinates $(T,\phi,y^{2},\ldots,y^{N-1})$,
by starting with Minkowski space $(\mathbb{R}^{1,N-1},\eta)$, where
the metric structure is expressed as $\eta=-(d\tau)^{2}+\sum_{i=1}^{N-1}(dy^{i})^{2}$,
and then applying a sequence of coordinate transformations (see~\cite{Ori Misner space,Rieger + Throne}
for more details). In these Misner coordinates, the metric $g$ takes
the form
\begin{equation}
g=-2dTd\phi-T(d\phi)^{2}+\sum_{i=2}^{N-1}(dy^{i})^{2},\label{eq:Misner metric}
\end{equation}
with coordinate domains $-\infty<T<\infty$ and $0\leq\phi\leq2\pi$.
Here, $T$ remains a timelike coordinate, while $\phi$ serves as
an angular coordinate and $y^{2},\ldots,y^{N-1}$ are spatial spectator
directions. Thus, the underlying topology of $N$--dimensional Misner
space is that of a hypercylinder $\mathbb{R}^{N-1}\times S^{1}$.
In Misner coordinates, the orbits (generated by the Lorentz group
$\Gamma$) in Misner space $\mathcal{M}_{\text{Misner}}$ are curves
confined to surfaces of constant $T$, where only the angular-like
coordinate $\phi$ changes. These orbits are closed timelike curves
(CTCs) when $T>0$, or hyperbolic orbits when $T<0$. Thus, $\mathcal{M}_{\text{Misner}}$
is the set of all these orbits. Each unique point in $\mathcal{M}_{\text{Misner}}$
is an \textit{entire} orbit (an equivalence class) from $\mathcal{R}\subset\mathbb{R}^{1,N-1}$.

~

The region $\mathcal{R}$ of $N$--dimensional Minkowski space $(\mathbb{R}^{1,N-1},\eta)$
serves as the covering space for $N$--dimensional Misner space $(\mathcal{R}/\Gamma,g)$,
with the covering map being the natural projection $\pi:\mathcal{R}\to\mathcal{R}/\Gamma$.
This generalizes the $2$--dimensional case where region $I+II$
of Minkowski space is the covering space for $2$--dimensional Misner
space, as described in~\cite{Hawking + Ellis - The large scale structure of spacetime,Rieger - Misner}.
Moreover, the Misner coordinates $(T,\phi,y^{2},\ldots,y^{N-1})$
are related to the Minkowski coordinates $(\tau,y^{1},y^{2},\ldots,y^{N-1})$
by the transformations: 
\begin{equation}
T=\frac{1}{4}((y^{1})^{2}-\tau^{2}),\quad\phi=2\ln\left(\frac{y^{1}-t}{2}\right)\quad\text{and}\quad y^{i}=y^{i}\quad\text{for }i\in\{2,\ldots,N-1\}.\label{eq:Misner-Minkowski coordinates}
\end{equation}

\begin{rem}
\label{rem:spectator-coords} Note that for calculations, we can ignore
the spectator $y^{i}$--coordinates (for $i\in\{2,\ldots,N-1\}$)
in the Misner hypercylinder, as this is merely a trivial extension
of dimensionality. One can visualize the additional $y^{i}$--axes
as protruding from the $(\tau,y^{1})$--plane, providing a tangible
representation of the geometric relationships. Altogether, $\mathbb{R}^{N-1}\times S^{1}$
describes an $N$--dimensional space in which each point in $\mathbb{R}^{N-1}$
corresponds to an entire circle---imagine a (tiny) loop attached
to every point in the plane. Consequently, the space resembles a solid
structure with a compact, potentially ``hidden'' circular dimension
at each location, akin to an infinite stack of circles distributed
over an $(N-1)$--dimensional Euclidean space (``plane''). Without
loss of generality, we may fix the spectator coordinates $y^{i}$,
thereby reducing the hypercylinder to the well-known two-dimensional
Misner space. 
\end{rem}

Now consider the composition $\pi\circ\psi$, where $\pi:\mathcal{R}\to\text{\ensuremath{\mathcal{M}_{\text{Misner}}}}$
is the quotient map from a suitable region $\mathcal{R}\subset\mathbb{R}^{1,N-1}$,
and $\psi:U\subseteq\tilde{M}\longrightarrow\mathbb{R}^{1,N-1}$ is
an $\mathcal{H}$--global isometric embedding as defined in Theorem~\ref{thm:H-Global Minkowski}.
We are interested in the metric induced on $U\subseteq\tilde{M}$
by this composition. This requires the property of pullbacks for covariant
tensors.
\begin{lem}
\label{lem:Pullback-Property}Let $F:M\to N$ and $G:N\to P$ be local
diffeomorphisms. For any covariant tensor field $T$ on $P$, the
pullback $(G\circ F)^{*}T$ satisfies 
\[
(G\circ F)^{*}T=F^{*}(G^{*}T).
\]
\end{lem}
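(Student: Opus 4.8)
The plan is to prove this by the standard functoriality argument for pullbacks of covariant tensors, working pointwise and invoking the chain rule for differentials. Since both sides of the claimed identity are covariant tensor fields of the same type on $M$ (say $T$ is a $(0,k)$--tensor on $P$), it suffices to verify that they agree at an arbitrary point $p\in M$ when evaluated on an arbitrary $k$--tuple of tangent vectors $v_{1},\ldots,v_{k}\in T_{p}M$.

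First I would recall the defining formula for the pullback: for a smooth map $F:M\to N$, a covariant $k$--tensor $S$ on $N$, a point $p\in M$, and vectors $v_{1},\ldots,v_{k}\in T_{p}M$,
\[
(F^{*}S)_{p}(v_{1},\ldots,v_{k})=S_{F(p)}\big(dF_{p}(v_{1}),\ldots,dF_{p}(v_{k})\big),
\]
where $dF_{p}:T_{p}M\to T_{F(p)}N$ denotes the differential of $F$ at $p$. Applying this to the composite map $G\circ F$ gives the left-hand side
\[
\big((G\circ F)^{*}T\big)_{p}(v_{1},\ldots,v_{k})=T_{(G\circ F)(p)}\big(d(G\circ F)_{p}(v_{1}),\ldots,d(G\circ F)_{p}(v_{k})\big).
\]

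The crucial step is then to invoke the chain rule for differentials, $d(G\circ F)_{p}=dG_{F(p)}\circ dF_{p}$, and substitute. Writing $w_{j}:=dF_{p}(v_{j})\in T_{F(p)}N$, the right-hand side becomes $T_{G(F(p))}\big(dG_{F(p)}(w_{1}),\ldots,dG_{F(p)}(w_{k})\big)$, which by the definition of pullback is exactly $(G^{*}T)_{F(p)}(w_{1},\ldots,w_{k})=(G^{*}T)_{F(p)}\big(dF_{p}(v_{1}),\ldots,dF_{p}(v_{k})\big)$. Applying the defining formula once more identifies this as $\big(F^{*}(G^{*}T)\big)_{p}(v_{1},\ldots,v_{k})$, completing the verification; since $p$ and the $v_{j}$ were arbitrary, the two tensor fields coincide.

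There is no genuine obstacle here: the statement is simply the contravariant functoriality of the pullback, and the only care required is the bookkeeping of base points together with the correct application of the chain rule. I would also remark that the hypothesis that $F$ and $G$ be local diffeomorphisms is stronger than necessary --- the identity holds verbatim for arbitrary smooth maps --- but it is the natural setting for the intended application, in which $\psi$ is an embedding and the quotient projection $\pi$ is a local diffeomorphism, so that the metric induced on $U$ by $\pi\circ\psi$ may be computed in two stages as $\psi^{*}(\pi^{*}g)$.
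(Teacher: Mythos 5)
Your proof is correct and follows essentially the same route as the paper's: pointwise evaluation of both sides on arbitrary vectors $v_{1},\dots,v_{k}\in T_{p}M$, the chain rule $d(G\circ F)_{p}=dG_{F(p)}\circ dF_{p}$, and the substitution $w_{j}:=dF_{p}(v_{j})$ to identify the result with $\bigl(F^{*}(G^{*}T)\bigr)_{p}$. Your closing remark is also accurate: the identity holds for arbitrary smooth maps, so the local-diffeomorphism hypothesis in the paper's statement is not needed for the lemma itself, only reflecting the intended application to $\psi$ and the quotient map $\pi$.
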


\begin{proof}
Let $T$ be a covariant tensor field of rank $k$ on $P$. Let $p\in M$
and $v_{1},\dots,v_{k}\in T_{p}M$. By the definition of the pullback
operation: 

\textbf{~}\\ \begin{align*} 
((G \circ F)^* T)_p(v_1, \dots, v_k) 
&= T_{(G \circ F)(p)}(d(G \circ F)_p(v_1), \dots, d(G \circ F)_p(v_k)) \\ 
&= T_{G(F(p))}((dG)_{F(p)}(dF)_p(v_1), \dots, (dG)_{F(p)}(dF)_p(v_k)) \end{align*} 

Now, consider $F^{*}(G^{*}T)$. Let $w_{i}=(dF)_{p}(v_{i})$, so $w_{i}\in T_{F(p)}N$.

\textbf{~}\\ \begin{align*} (F^* (G^* T))_p(v_1, \dots, v_k) &= (G^* T)_{F(p)}((dF)_p(v_1), \dots, (dF)_p(v_k)) \\ &= (G^* T)_{F(p)}(w_1, \dots, w_k) \\ &= T_{G(F(p))}((dG)_{F(p)}(w_1), \dots, (dG)_{F(p)}(w_k)) \\ &= T_{G(F(p))}((dG)_{F(p)}(dF)_p(v_1), \dots, (dG)_{F(p)}(dF)_p(v_k)) \end{align*}
~

Comparing the final expressions, we see that 
\[
((G\circ F)^{*}T)_{p}(v_{1},\dots,v_{k})=(F^{*}(G^{*}T))_{p}(v_{1},\dots,v_{k}),
\]
thus proving the identity. 
\end{proof}
This leads into the next proposition, where $\psi:U\subseteq\tilde{M}\longrightarrow\mathbb{R}^{1,N-1}$
is an $\mathcal{H}$--global isometric embedding, given by $\psi(t,\mathbf{\hat{x}})=\left(-\frac{2}{3}(1+t)^{3/2},\Phi(t,\mathbf{\hat{x}})\right)$.
We consider the case where the image $\psi(U)$ is contained within
the region $\mathcal{R}:=\{(\tau,y^{1},y^{2},\ldots,y^{N-1})\in\mathbb{R}^{1,N-1}\mid y^{1}-\tau>0\}$.
This condition, which is required for the subsequent construction
of the map to Misner space, is ensured by our specific choice of the
time coordinate and the corresponding properties of the spatial embedding
$\Phi(t,\mathbf{\hat{x}})$ dictated by the $\mathcal{H}$--global
isometric embedding theorem. We now state the proposition.
\begin{prop}
\label{prop:misner_immersion}Let $\psi:U\subseteq\tilde{M}\longrightarrow\mathbb{R}^{1,N-1}$
be an $\mathcal{H}$--global isometric embedding whose image is contained
within the region $\mathcal{R}:=\{(\tau,y^{1},y^{2},\ldots,y^{N-1})\in\mathbb{R}^{1,N-1}\mid y^{1}-\tau>0\}$.
Let $\pi:\mathcal{R}\to\mathcal{M}_{\text{Misner}}$ be the quotient
map to Misner space $\mathcal{M}_{\text{Misner}}$. Then the composition
$\pi\circ\psi$ is an isometric immersion.
\end{prop}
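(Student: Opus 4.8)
The plan is to reduce everything to two structural facts that are already in hand: that $\psi$ is an isometric embedding (Theorem~\ref{thm:H-Global Minkowski}) and that the projection $\pi$ is a \emph{local isometry}. The latter is the geometric core of the argument. Since $\Gamma\subset O(1,N-1)$ consists of Lorentz transformations, it acts on $(\mathcal{R},\eta)$ by isometries, and by Definition~\ref{Misner space} the action of $\Gamma$ on $\mathcal{R}$ is properly discontinuous and free. Hence $\pi:\mathcal{R}\to\mathcal{R}/\Gamma=\mathcal{M}_{\mathrm{Misner}}$ is a smooth covering map, and the Misner metric $g$ is by construction the unique metric on the quotient satisfying $\pi^{*}g=\eta|_{\mathcal{R}}$. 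In particular $\pi$ is a local diffeomorphism, so every differential $d\pi_{\psi(p)}$ is a linear isomorphism. The hypothesis $\psi(U)\subset\mathcal{R}$ (guaranteed by the choice of temporal function in Theorem~\ref{thm:H-Global Minkowski}) is exactly what makes $\pi$ defined along the image of $\psi$, so that $\pi\circ\psi$ makes sense on all of $U$.

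First I would check that $\pi\circ\psi$ is an immersion. By the chain rule, $d(\pi\circ\psi)_{p}=d\pi_{\psi(p)}\circ d\psi_{p}$ for every $p\in U$. Theorem~\ref{thm:H-Global Minkowski} gives that $\psi$ is an embedding, so $d\psi_{p}$ is injective; postcomposing with the linear isomorphism $d\pi_{\psi(p)}$ preserves injectivity. Thus $d(\pi\circ\psi)_{p}$ is injective at every point, which is precisely the immersion condition.

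Next I would establish the isometric property, $(\pi\circ\psi)^{*}g=\tilde{g}|_{U}$. The cleanest route is a pointwise computation: for $p\in U$ and $v,w\in T_{p}U$,
\[
((\pi\circ\psi)^{*}g)_{p}(v,w)=g_{\pi(\psi(p))}\big(d\pi\,d\psi\,v,\;d\pi\,d\psi\,w\big)=(\pi^{*}g)_{\psi(p)}\big(d\psi\,v,\;d\psi\,w\big)=\eta_{\psi(p)}\big(d\psi\,v,\;d\psi\,w\big),
\]
where the middle equality is the chain rule together with the definition of pullback, and the final equality uses $\pi^{*}g=\eta|_{\mathcal{R}}$. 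The right-hand side equals $(\psi^{*}\eta)_{p}(v,w)=\tilde{g}_{p}(v,w)$ by the isometric property of $\psi$. Equivalently, one may quote the functoriality of pullback $(\pi\circ\psi)^{*}g=\psi^{*}(\pi^{*}g)$ recorded in Lemma~\ref{lem:Pullback-Property}.

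The argument is essentially routine once $\pi$ is recognized as a local isometry, so there is no single hard computational step. The point that merits care is the following subtlety: Lemma~\ref{lem:Pullback-Property} is stated for local diffeomorphisms, yet here $\psi$ is only an embedding into a strictly higher-dimensional space and is therefore \emph{not} a local diffeomorphism. I would resolve this either by invoking the general functoriality of tensor pullback (valid for arbitrary smooth maps, as the chain-rule proof of Lemma~\ref{lem:Pullback-Property} in fact uses nothing beyond $d(\pi\circ\psi)=d\pi\circ d\psi$), or by simply carrying out the direct pointwise computation above, which sidesteps the dimension mismatch entirely. I would also stress that only \emph{immersion}, not embedding, is asserted, so no injectivity of $\pi\circ\psi$ is required here; whether distinct points of $U$ can land on the same $\Gamma$--orbit is a separate matter, deferred to the transversality and monotonicity analysis of Proposition~\ref{prop:embedding transverse to the orbital foliation} and Lemma~\ref{lem:orbit-tangency-H-global}.
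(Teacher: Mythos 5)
Your proposal is correct and takes essentially the same route as the paper: immersion via the chain rule (with $\pi$ a local diffeomorphism, hence $d\pi_{\psi(p)}$ an isomorphism composed with the injective $d\psi_{p}$), and the metric identity via $(\pi\circ\psi)^{*}g_{\text{Misner}}=\psi^{*}(\pi^{*}g_{\text{Misner}})=\psi^{*}\eta=\tilde{g}$, exactly as in the paper's use of Lemma~\ref{lem:Pullback-Property} together with $\pi^{*}g_{\text{Misner}}=\eta$. Your side remark is a small genuine improvement on the paper: Lemma~\ref{lem:Pullback-Property} is stated only for local diffeomorphisms, while $\psi$ maps into a higher-dimensional space and is not one, and the paper applies the lemma as-is; you correctly observe that the hypothesis is superfluous (the proof uses only $d(\pi\circ\psi)=d\pi\circ d\psi$, valid for arbitrary smooth maps), or can be bypassed entirely by your pointwise computation.
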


\begin{proof}
First, we establish that $\Psi=\pi\circ\psi$ is an immersion. The
map $\psi:U\subseteq\tilde{M}\to\mathbb{R}^{1,N-1}$ is an $\mathcal{H}$--global
isometric embedding, and thus a smooth immersion. The map $\pi:\mathcal{R}\to\mathcal{M}_{\text{Misner}}$
is a quotient map and a local diffeomorphism. Since the composition
of two immersions is an immersion, and a local diffeomorphism is an
immersion, it follows that $\Psi=\pi\circ\psi$ is a smooth immersion.

~

Next, we must prove that $\Psi$ is an isometry. This means we must
show that the pullback of the Misner space metric, $g_{\text{Misner}}$,
by the map $\Psi$ is equal to the original metric on $(\tilde{M},\tilde{g})$.
That is, we must prove: 
\[
\Psi^{*}g_{\text{Misner}}=\tilde{g}.
\]
We begin by recalling the relevant definitions and established results: 

~

(1) The map $\psi:U\subseteq\tilde{M}\to\mathbb{R}^{1,N-1}$ is an
isometric embedding. By definition, this means that the metric on
$(U,\tilde{g}|_{U})$, is the pullback of the flat Minkowski metric
$\eta$ on $\mathbb{R}^{1,N-1}$. Thus, $\tilde{g}=\psi^{*}\eta$. 

(2) The Misner space metric, $g_{\text{Misner}}$, is defined as the
\textit{unique} metric on the quotient space $\mathcal{M}_{\text{Misner}}$
such that the quotient map $\pi:\mathcal{R}\to\mathcal{M}_{\text{Misner}}$
is a local isometry. This implies that the pullback of the Misner
metric by $\pi$ is equal to the flat Minkowski metric on its domain
$\mathcal{R}$. Thus, $\pi^{*}g_{\text{Misner}}=\eta$. 

~

Now, let's evaluate the pullback of the Misner space metric by our
composite map $\Psi=\pi\circ\psi$. Using the functorial property
of pullbacks (Lemma~\ref{lem:Pullback-Property}), which
states that $(F\circ G)^{*}T=G^{*}(F^{*}T)$, we have: 
\[
\Psi^{*}g_{\text{Misner}}=(\pi\circ\psi)^{*}g_{\text{Misner }}=\psi^{*}(\pi^{*}g_{\text{Misner}}).
\]
Substituting the definition of the Misner metric from (2) into this
expression yields: 
\[
\Psi^{*}g_{\text{Misner}}=\psi^{*}\eta
\]
Finally, we substitute the definition of the isometric embedding $\psi$
from (1): 
\[
\Psi^{*}g_{\text{Misner}}=\tilde{g}
\]
This result demonstrates that the composite map $\Psi=\pi\circ\psi$
is an isometric immersion, which completes the proof. 
\end{proof}
We now proceed with the proof of Theorem~\ref{thm:H-Global Misner}.
\begin{thm*}
Let $(\tilde{M},\tilde{g})$ be an $n$--dimensional transverse signature-type
changing manifold with a transverse radical and with fixed $\mathcal{H}$--global
neighborhood $U=\bigcup_{q\in\mathcal{H}}U(q)\subset\tilde{M}$. Then
there exists an $\mathcal{H}$--global isometric embedding $\pi\circ\psi:U\longrightarrow\mathcal{M}_{\mathrm{Misner}}$
of $(U,\tilde{g}|_{U})$ into Misner space $\mathcal{M}_{\mathrm{Misner}}$.
\end{thm*}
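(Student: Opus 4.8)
The plan is to show that the map $\Psi:=\pi\circ\psi$ built from the Minkowski embedding of Theorem~\ref{thm:H-Global Minkowski} is not merely an isometric immersion but a genuine $\mathcal{H}$--global embedding. Proposition~\ref{prop:misner_immersion} already supplies the immersion-and-isometry half: $\psi$ is an isometric embedding into $\mathbb{R}^{1,N-1}$, $\pi$ is a local isometry, and the functorial pullback identity of Lemma~\ref{lem:Pullback-Property} gives $\Psi^{*}g_{\mathrm{Misner}}=\psi^{*}(\pi^{*}g_{\mathrm{Misner}})=\psi^{*}\eta=\tilde g|_{U}$. Consequently the entire remaining content is to upgrade ``immersion'' to ``embedding'', that is, to prove that $\Psi$ is injective and a homeomorphism onto its image.

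First I would reduce injectivity to a statement about orbits. Since $\psi$ is injective and $\pi$ identifies precisely the points of a common $\Gamma$--orbit, an equality $\Psi(p_{1})=\Psi(p_{2})$ with $p_{1}\neq p_{2}$ forces $\psi(p_{1})$ and $\psi(p_{2})$ to be distinct points on a single continuous boost orbit $\mathcal{O}\subset\mathcal{R}$, related by a nonzero power $A^{n}$ of the generator. So injectivity of $\Psi$ is equivalent to: no $\Gamma$--orbit meets $\psi(U)$ in two distinct points. Writing $\psi(p_{j})=\gamma_{s_{j}}(q)$ with $s_{1}<s_{2}$, the function $F_{q}(s)=t(\psi^{-1}(\gamma_{s}(q)))$ of Lemma~\ref{lem:orbit-tangency-H-global} is available on the intervening segment. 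Transversality (Proposition~\ref{prop:embedding transverse to the orbital foliation}) asserts that $K$ is nowhere tangent to $\psi(U)$; by the parenthetical in Lemma~\ref{lem:orbit-tangency-H-global} this excludes the interior--extremum alternative, so $F_{q}$ is strictly monotone. In particular $F_{q}(s_{1})\neq F_{q}(s_{2})$, which immediately rules out a collision of two points carrying the \emph{same} $t$--value.

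It remains to exclude $\Gamma$--related intersection points with \emph{different} $t$--values, and this is where the real difficulty lies. Here I would pass to the global Misner angular coordinate $\phi=2\ln\!\big((y^{1}-\tau)/2\big)$, which is well defined on $\mathcal{R}$ and on which $\Gamma$ acts by the shift $\phi\mapsto\phi-2\pi$ per generator; two points are $\Gamma$--identified exactly when their $T$-- and spectator--coordinates agree and their $\phi$--values differ by an integer multiple of $2\pi$. Along $\mathcal{O}$ the coordinate $\phi$ is affine (hence strictly monotone) in the rapidity $s$, so a collision by $A^{n}$ would force a $\phi$--displacement of precisely $2\pi n$ while $F_{q}$ carries $t$ monotonically from $t_{1}$ to $t_{2}$. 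The task is to show this cannot happen, i.e. that $\psi(U)$ cannot ``wrap around'' a full angular period while remaining inside $\mathcal{R}$ and transverse to the foliation. Using that $\tau=-\tfrac{2}{3}(1+t)^{3/2}$ is a strictly monotone function of $t$ alone, together with the strict monotonicity of $F_{q}$, one would control the joint variation of $(\tau,\phi)$ along any orbit segment meeting $\psi(U)$ and derive a contradiction with a $2\pi$--periodic return; Corollary~\ref{cor:injectivity-by-lemma-orbit-tangency} is meant to package exactly this step, reducing it to the transversality already in hand. I expect this discrete--group return estimate to be the genuine obstacle, since the Nash map $\Phi$ is essentially unconstrained and the angular coordinate $\phi\circ\psi$ need not a priori stay within a single fundamental domain.

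Finally, with injectivity secured, I would promote $\Psi$ to an embedding by verifying that it is a homeomorphism onto its image. Because $\pi$ is a covering map and a local isometry, and $\psi$ is already an embedding whose image meets each orbit at most once, the restriction $\pi|_{\psi(U)}$ is a continuous open bijection onto $\Psi(U)$ and hence a homeomorphism; equivalently one checks properness by exhausting $U$ through the spacelike slices of Corollary~\ref{cor: Time Function Foliation} using the absolute time function $\mathfrak{h}=t$. Combining the isometric immersion property of Proposition~\ref{prop:misner_immersion}, the orbit--injectivity argument, and this homeomorphism check then yields the desired $\mathcal{H}$--global isometric embedding $\Psi:U\to\mathcal{M}_{\mathrm{Misner}}$, completing the proof of Theorem~\ref{thm:H-Global Misner}.
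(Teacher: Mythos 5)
Your architecture matches the paper's proof of Theorem~\ref{thm:H-Global Misner} almost exactly: the isometric-immersion half via Proposition~\ref{prop:misner_immersion} and Lemma~\ref{lem:Pullback-Property} is the paper's Step~III, the reduction of injectivity to ``no $\Gamma$--orbit meets $\psi(U)$ twice'' is the paper's Step~I, and your openness argument for the homeomorphism claim is the paper's Step~II. Those portions are correct.

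The genuine gap is exactly where you flag it yourself: you never exclude a collision $\psi(p_2)=\gamma(\psi(p_1))$ between points with \emph{different} $t$--values; you only sketch a ``wrap-around'' estimate in the Misner angle $\phi$ and concede it is the obstacle. This step does not follow from the ingredients you assemble. Strict monotonicity of $F_q$ (which you correctly extract from transversality and Lemma~\ref{lem:orbit-tangency-H-global}) rules out collisions at \emph{equal} $t$--values but is entirely consistent with an orbit meeting $\psi(U)$ at two points of distinct $t$; and nothing in Theorem~\ref{thm:H-Global Minkowski} bounds the total variation of $\phi\circ\psi$, since the Nash component $\Phi$ is unconstrained --- so no contradiction with a $2\pi n$ angular return can be derived from the monotonicity of $\tau(t)$ and $F_q$ alone, as you yourself observe. (Note also that invoking $F_q$ on the whole interval $(s_1,s_2)$ tacitly requires the orbit segment between the two intersection points to lie in $\psi(U)$; otherwise $F_q$ is undefined there.) The paper disposes of this case differently: in Case~(b) of its proof it argues from Proposition~\ref{prop:embedding transverse to the orbital foliation} that an orbit can meet the image of at most one $t$--level, because a second intersection at a distinct $t$--level would, by connectedness and the intermediate-value property, force the boost generator $K$ to become tangent to an embedded $t$--leaf somewhere in between --- the second alternative of Lemma~\ref{lem:orbit-tangency-H-global}, which transversality forbids --- with Corollary~\ref{cor:injectivity-by-lemma-orbit-tangency} packaging that dichotomy. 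So your proposal reproduces the paper's skeleton faithfully but leaves its load-bearing injectivity step as an unproven sketch rather than closing it the way the paper does.
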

\begin{proof}
Throughout this proof, all maps, coordinates, and geometric objects
are considered as defined on the fixed $\mathcal{H}$--global neighborhood
$U$ as in the theorem statement and the standing assumption. In particular,
the embedding $\psi:U\to\mathbb{R}^{1,N-1}$ and all related constructions
are restricted to $U$.

Let $\psi:U\subseteq\tilde{M}\to\mathbb{R}^{1,N-1}$ be the embedding
constructed in Theorem~\ref{thm:H-Global Minkowski}, whose image
lies in the region 
\[
\mathcal{R}=\{(\tau,y^{1},\dots,y^{N-1})\in\mathbb{R}^{1,N-1}\;|\;y^{1}-\tau>0\},
\]
 and let $\Gamma\subset O(1,N-1)$ be the one--parameter subgroup
of hyperbolic rotations acting in the $(\tau,y^{1})$--plane. Denote
by 
\[
\pi:\mathcal{R}\longrightarrow\mathcal{R}/\Gamma\cong\mathcal{M}_{\mathrm{Misner}}
\]
 the quotient map onto Misner space. We prove that the composition
\[
\pi\circ\psi:\tilde{M}\longrightarrow\mathcal{M}_{\mathrm{Misner}}
\]
is an isometric embedding, i.e. $\pi\circ\psi$ is an injective isometric
immersion which is a homeomorphism onto its image. While the property
of being an isometric immersion was established in Proposition~\ref{prop:misner_immersion},
we present here a self-contained proof that simultaneously demonstrates
all properties, thus directly establishing that the map is an isometric
embedding. The proof concludes with an alternative demonstration of
the isometric immersion property.

~

\textbf{I. Injectivity of }$\pi\circ\psi$\textbf{:} Suppose by contradiction
that there exist two distinct points $p_{1},p_{2}\in\tilde{M}$ with
\[
\pi\circ\psi(p_{1})=\pi\circ\psi(p_{2}).
\]
Since $\psi$ is an embedding, $\psi(p_{1})\neq\psi(p_{2})$, and
therefore the equality of images under $\pi$ implies that $\psi(p_{1})$
and $\psi(p_{2})$ lie on the same $\Gamma$--orbit in $\mathcal{R}$.
Equivalently, there exists $\gamma\in\Gamma$ with 
\[
\psi(p_{2})=\gamma\big(\psi(p_{1})\big).
\]

Write $\psi(p)=(\tau(p),Y^{1}(p),Y^{2}(p),\dots,Y^{N-1}(p))$ for
$p\in\tilde{M}$. By construction the action of $\Gamma$ is a one-parameter
subgroup of hyperbolic rotations $\Gamma\subset O(1,N-1)$ acting
only in the $(\tau,Y^{1})$--coordinates and leaving each transverse
spatial coordinate $Y^{A}$, $A\ge2$, fixed. Hence from $\psi(p_{2})=\gamma(\psi(p_{1}))$
we deduce 
\[
Y^{A}(p_{2})=Y^{A}(p_{1})\qquad\text{for every }A\ge2.\tag{1}
\]
Next recall that the embedding $\psi$ was chosen so that the first
component depends only on the absolute time coordinate $t$ on $\tilde{M}$,
namely 
\[
\tau(p)=-\tfrac{2}{3}\big(1+t(p)\big)^{3/2},
\]
with $t=\mathfrak{h}(t,\mathbf{\hat{x}})$ strictly increasing along
the leaves of the absolute time foliation (Corollary~\ref{cor: Time Function Foliation}).
In particular the assignment $t\mapsto\tau$ is strictly monotone,
hence $\tau(p_{1})=\tau(p_{2})$ if and only if $t(p_{1})=t(p_{2})$.

\textbf{~}\\ We consider two possibilities:

~

\textbf{Case (a):} $\emph{ \ensuremath{t(p_{1})=t(p_{2})}.}$ 

Then $\tau(p_{1})=\tau(p_{2})$. Because $\gamma$ acts only in the
$(\tau,Y^{1})$--plane and leaves $Y^{A}$ ($A\ge2$) fixed, from
$\psi(p_{2})=\gamma(\psi(p_{1}))$ we obtain $Y^{1}(p_{2})=Y^{1}(p_{1})$
as well. Thus all coordinates of $\psi(p_{1})$ and $\psi(p_{2})$
agree, so $\psi(p_{1})=\psi(p_{2})$, contradicting injectivity of
$\psi$.

~

\textbf{Case (b): $\ensuremath{t(p_{1})\neq t(p_{2})}$.}

Then $\tau(p_{1})\neq\tau(p_{2})$. Since $\psi(\tilde{M})$ is transverse
to the $\Gamma$--orbits by Proposition~\ref{prop:embedding transverse to the orbital foliation},
the embedded leaves $\psi\big(\mathfrak{h}^{-1}(t)\big)$ (the images
of the $t$--level slices) are transverse to the one-dimensional
orbits. Transversality implies that any given $\Gamma$--orbit meets
each embedded $t$--level in at most one point: near any intersection
the orbit crosses the leaf (rather than being tangent to it), so two
distinct intersections with the same orbit would force a tangency
by connectedness and the intermediate value property (the continuous
map from the orbit to the $t$--coordinate would take two different
values and hence assume a critical value between them). Therefore
an orbit cannot meet the image of two distinct $t$--levels. This
contradicts the assumption of Case (b) that $\psi(p_{1})$ and $\psi(p_{2})$
(on the same orbit) lie on different $t$--levels. 

~

Hence neither case can occur, and we conclude $p_{1}=p_{2}$. Thus
$\iota=\pi\circ\psi$ is injective.

\textbf{~}

\textbf{II. Homeomorphism onto its Image: }The map $\pi\circ\psi:U\to(\pi\circ\psi)(U)$
is continuous as a composition of continuous maps. To prove the map
$\pi\circ\psi:U\subseteq\tilde{M}\to(\pi\circ\psi)(U)$ is a homeomorphism
onto its image, it suffices to show that it is an open map onto its
image (an injective continuous open map onto its image has continuous
inverse).

Let $\bar{V}\subset U\subseteq\tilde{M}$ be open in the subspace
topology. Since $\psi$ is an embedding defined on $U$, the set $\psi(\bar{V})$
is open in the subspace topology of $\psi(U)$. Therefore there exists
an open set $V\subset\mathcal{R}$ such that 
\[
\psi(\bar{V})=V\cap\psi(U).
\]
Apply the quotient map $\pi$ to obtain 
\[
(\pi\circ\psi)(\bar{V})=\pi\big(\psi(\bar{V})\big)=\pi\big(V\cap\psi(U)\big)=\pi(V)\cap\pi(\psi(U)).
\]
Since $\pi:\mathcal{R}\to\mathcal{R}/\Gamma$ is a quotient map arising
from a continuous group action, $\pi(V)$ is open in the quotient
$\mathcal{R}/\Gamma$. Hence $\pi(V)\cap\pi(\psi(U))$ is open in
the subspace topology on $\pi(\psi(U))$. Therefore $(\pi\circ\psi)(\bar{V})$
is open in $(\pi\circ\psi)(U)$, which shows that $\pi\circ\psi:U\to(\pi\circ\psi)(U)$
is an open map onto its image.

Thus $\pi\circ\psi$ is a continuous bijection from $U\subseteq\tilde{M}$
onto the subspace $(\pi\circ\psi)(U)$ whose inverse is continuous,
it follows that $\pi\circ\psi$ is a homeomorphism onto its image.

~

For completeness:

~

\textbf{III. Isometric immersion:} By construction, the embedding
$\psi:U\subseteq\tilde{M}\to\mathbb{R}^{1,N-1}$ is an isometric embedding.
This means that the metric induced by $\psi$ on $U$ is precisely
the original metric $\tilde{g}|_{U}$: 
\[
\psi^{*}\eta(X,Y)=\tilde{g}|_{U}(X,Y)
\]
for all tangent vectors $X,Y\in T_{p}U\subseteq T_{p}\tilde{M}$,
$p\in U$.

~

The map $\pi:\mathcal{R}\to M_{\text{Misner}}$ is the quotient map
to Misner space, which identifies points along the orbits of the group
action $\Gamma$. The induced metric on the quotient manifold $M_{\text{Misner}}$
is given by the Minkowski metric $\eta$ restricted to the subspace
of tangent vectors in $\mathcal{R}$ that are orthogonal to the orbits
of $\Gamma$. The projection map $\pi$ is an isometry on this subspace.

~

A key assumption of our construction is that the foliation of $U\subseteq\tilde{M}$
by the level sets of the absolute time function is transverse to the
orbits of the group action $\Gamma$ in Minkowski space. This transversality
condition ensures that the pushforward of any non-zero tangent vector
$X\in T_{p}\tilde{M}$, with $p\in U$, by the map $\psi$ results
in a vector $D\psi(X)$ that is never tangent to an orbit of $\Gamma$.
Consequently, $D\psi(X)$ is always in the subspace on which the projection
$D\pi$ acts as an isometry. Given this, we can now compute the induced
metric of the composite map $\pi\circ\psi$: 

~

$(\pi\circ\psi)^{*}\eta(X,Y)$ 

$=\eta(D(\pi\circ\psi)(X),D(\pi\circ\psi)(Y))$ 

$=\eta(D\pi(D\psi(X)),D\pi(D\psi(Y)))$.

\textbf{~}\\ Because the tangent vectors $D\psi(X)$ and $D\psi(Y)$
are in the subspace where $D\pi$ is an isometry (due to transversality),
the projection preserves their inner product: 

$\eta(D\pi(D\psi(X)),D\pi(D\psi(Y)))$ 

$=\eta(D\psi(X),D\psi(Y))$ 

= $\psi^{*}\eta(X,Y)$

$=\tilde{g}(X,Y)$.

\textbf{~}\\ Thus, we have shown that $(\pi\circ\psi)^{*}\eta=\tilde{g}|_{U}$.
This proves that the composite map $\pi\circ\psi$ preserves the metric
and is an isometric embedding.

~

\textbf{IV. Conclusion:} Combining Steps 1--3 we find that $\pi\circ\psi$
is an injective isometric immersion which is a homeomorphism onto
its image, i.e. an isometric embedding of $(U,\tilde{g}|_{U})$ into
Misner space $\mathcal{M}_{\mathrm{Misner}}$, as required. We refer
the reader to the Appendix for the supplementary calculation in coordinates.
\end{proof}
\begin{rem}
The condition ``the embedded manifold intersects each orbit exactly
once'' is precisely the definition of its image, $\psi(U)$, being
a fundamental domain for the action of the boost group $\Gamma$ on
the region $\mathcal{R}$. 
\end{rem}

\begin{cor}
Under the hypotheses of Theorem~\ref{thm:H-Global Misner}, if the
extra monotonicity hypothesis of Corollary~\ref{cor:injectivity-by-lemma-orbit-tangency}
is satisfied, no orbit can meet $\psi(U)$ in two distinct points
with different $t$--values. Together with the uniqueness argument
for identical $t$ (Case (a) in the main proof) this implies injectivity
of $\pi\circ\psi$. 
\end{cor}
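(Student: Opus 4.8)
The plan is to read this corollary as the rigorous synthesis of the injectivity discussion in the proof of Theorem~\ref{thm:H-Global Misner}, assembling the two already-available exclusions it references. Concretely, I would argue by contradiction: suppose $p_{1}\neq p_{2}$ in $U$ satisfy $(\pi\circ\psi)(p_{1})=(\pi\circ\psi)(p_{2})$. Since $\psi$ is an embedding, hence injective, $\psi(p_{1})\neq\psi(p_{2})$, while equality after the quotient projection $\pi$ forces $\psi(p_{1})$ and $\psi(p_{2})$ onto a common $\Gamma$--orbit $\mathcal{O}=\{\gamma_{s}(q)\}$, say $\psi(p_{1})=\gamma_{s_{1}}(q)$ and $\psi(p_{2})=\gamma_{s_{2}}(q)$ with $s_{1}\neq s_{2}$. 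Because the time component depends on the absolute time coordinate through the strictly monotone relation $\tau=-\tfrac{2}{3}(1+t)^{3/2}$, everything reduces to comparing $t(p_{1})=F_{q}(s_{1})$ and $t(p_{2})=F_{q}(s_{2})$. If $t(p_{1})=t(p_{2})$, then $\tau(p_{1})=\tau(p_{2})$, and since each $\gamma\in\Gamma$ acts only in the $(\tau,Y^{1})$--plane while fixing the spectator coordinates $Y^{A}$ ($A\ge2$), the argument of Case~(a) in the proof of Theorem~\ref{thm:H-Global Misner} forces $Y^{1}(p_{1})=Y^{1}(p_{2})$ and hence $\psi(p_{1})=\psi(p_{2})$, contradicting injectivity of $\psi$.

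It remains to exclude the distinct-time alternative $t(p_{1})\neq t(p_{2})$, equivalently $F_{q}(s_{1})\neq F_{q}(s_{2})$. Here I would invoke Corollary~\ref{cor:injectivity-by-lemma-orbit-tangency} directly, recalling its argument for completeness: by the standing monotonicity hypothesis the smooth function $F_{q}(s)=t(\psi^{-1}(\gamma_{s}(q)))$ is \emph{not} strictly monotone on the intervening interval $(s_{1},s_{2})$, smoothness holding because $\psi$ is an embedding, so $\psi^{-1}$ is smooth on $\psi(U)$. By the dichotomy of Lemma~\ref{lem:orbit-tangency-H-global}, failure of strict monotonicity forces an interior extremum at some $s_{*}\in(s_{1},s_{2})$ with $F_{q}'(s_{*})=0$, which by that same lemma is equivalent to the boost generator $K$ being tangent to $\psi(U)$ at $\gamma_{s_{*}}(q)$. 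This is impossible by Proposition~\ref{prop:embedding transverse to the orbital foliation}, which makes $\mathrm{Im}(\psi)$ transverse to the orbital foliation on $\mathcal{R}$, so $K$ is nowhere tangent to $\psi(U)$. Thus no orbit meets $\psi(U)$ in two distinct points with different $t$--values, and combining the two exhausted cases yields $p_{1}=p_{2}$, establishing injectivity of $\pi\circ\psi$.

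I expect the only genuine subtlety --- and hence the main obstacle --- to lie in the logical status of the monotonicity hypothesis rather than in any computation. Through the equivalence in Lemma~\ref{lem:orbit-tangency-H-global}, transversality already forces $F_{q}'\neq0$ at every parameter $s$ with $\gamma_{s}(q)\in\psi(U)$, so a \emph{strictly monotone} $F_{q}$ satisfying $F_{q}(s_{1})\neq F_{q}(s_{2})$ is by itself consistent with transversality; transversality alone therefore does not rule out a single orbit meeting $\psi(U)$ at two distinct time-levels. It is exactly this gap that the extra hypothesis of Corollary~\ref{cor:injectivity-by-lemma-orbit-tangency} is meant to close, by postulating non-monotonicity and thereby converting a double intersection into a forbidden tangency. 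In writing the proof I would make this conditional structure explicit --- applying the monotonicity hypothesis to the offending orbit before invoking the dichotomy --- and I would verify that $F_{q}$ is defined and smooth on the whole interval $(s_{1},s_{2})$, i.e. that the orbit arc joining the two intersection points remains inside $\psi(U)\subset\mathcal{R}$, since the calculus argument of Lemma~\ref{lem:orbit-tangency-H-global} presupposes this.
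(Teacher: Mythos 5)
Your proposal is correct and follows essentially the same route as the paper, which proves this corollary exactly by combining Case~(a) of the injectivity argument in the proof of Theorem~\ref{thm:H-Global Misner} (equal $t$--values force $\psi(p_{1})=\psi(p_{2})$, contradicting injectivity of $\psi$) with Corollary~\ref{cor:injectivity-by-lemma-orbit-tangency}, i.e.\ the dichotomy of Lemma~\ref{lem:orbit-tangency-H-global} plus the transversality of Proposition~\ref{prop:embedding transverse to the orbital foliation} to exclude distinct $t$--values. Your closing caveats --- that transversality alone only gives $F_{q}'\neq 0$ and so does not by itself forbid a strictly monotone double crossing, and that $F_{q}$ must actually be defined and smooth on all of $(s_{1},s_{2})$, i.e.\ the orbit arc must remain in $\psi(U)$ --- are accurate readings of the conditional structure that the paper leaves implicit.
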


\section{Global Isometric Embedding\label{sec:global Isometric-embedding}}

Crucially, for a $n$--dimensional canonical model with signature-type
change, we are able to explicitly construct a fully global isometric
embedding into $(n+1)$--dimensional Minkowski space. This represents
a significantly stronger result than a $\mathcal{H}$--global embedding
for this specific case, as it is achieved without the need for local
constructions. This embedding into Minkowski space then immediately
yields a global isometric embedding into Misner space via the quotient
map. 

~

This construction provides a direct geometric framework for a braneworld
scenario: a $4$--dimensional spacetime with signature-type change
is realized as a brane whose intrinsic geometry is perfectly preserved
when embedded in a $5$--dimensional Minkowski bulk. The manner in
which the extra dimension is treated determines the physical interpretation
of this embedding.

~

We design a toy model on $\mathbb{R}^{n}$ with coordinates $(t,x^{1},\ldots,x^{n-1})$,
equipped with the transverse signature-type changing metric 
\[
\tilde{g}=-t(dt)^{2}+{\displaystyle \sum_{i=1}^{n-1}}(dx^{i})^{2}.
\]
This metric becomes degenerate at $t=0$, where the determinant vanishes.
The signature transitions from Riemannian $(0,n)$ for $t<0$ to Lorentzian
$(1,n-1)$ for $t>0$. The pair $(\mathbb{R}^{n},\tilde{g})$ is a
singular semi-Riemannian manifold with the locus of signature change
at the hypersurface $\mathcal{H}:=\{(t,\mathbf{\hat{x}})\in\mathbb{R}^{n}:t=0\}$.

~

Now consider the canonical embedding $\imath\colon\mathbb{R}^{n-1}\longrightarrow\mathbb{R}^{n}$
given by 
\[
\imath(\mathbf{\hat{x}})\coloneqq(0,x^{1},\ldots,x^{n-1}).
\]
The metric induced on the hypersurface of signature change is the
pullback $\imath^{*}\tilde{g}$. Since $dt=0$ on the image of $\imath$,
the induced metric takes the form:
\[
\imath^{*}\tilde{g}={\displaystyle \sum_{i=1}^{n-1}}(dx^{i})^{2}.
\]
Since the induced metric is positive definite, the induced geometry
is Riemannian. The normal to the hypersurface $\mathcal{H}$ is spanned
by the vector field $\frac{\partial}{\partial t}$. At the hypersurface
$t=0$, we find that $\tilde{g}(\frac{\partial}{\partial t},\frac{\partial}{\partial t})=-t=0$,
which means the normal is a null vector field. Consequently, the radical
$Rad_{q}:=\{w\in T_{q}M:\tilde{g}(w,\centerdot)=0\}$ is spanned by
the normal vector field, $\textrm{span}(\{\frac{\partial}{\partial t}\})$,
and is therefore transverse to the hypersurface at any $q\in\mathcal{H}$.
We utilize this well-defined setup as our toy model universe.

~

\subsection{Global Isometric Embedding into Minkowski Space\label{subsec:Global-isometric-embedding Minkowski}}

To formulate the question of isometric embedding more precisely, we
ask whether there exists a mapping $f:\mathbb{R}^{n}\longrightarrow\mathbb{R}^{1,n}$
such that the Minkowski metric $\eta$ in $\mathbb{R}^{1,n}$ pulls
back to the given metric $\tilde{g}$ on the submanifold $f(\mathbb{R}^{n})$.
The answer to this question leads to the following proposition.
\begin{prop}
Let $(\mathbb{R}^{n},\tilde{g})$ be the $n$--dimensional signature-type
changing manifold with the metric $\tilde{g}=-t(dt)^{2}+\sum_{i=1}^{n-1}(dx^{i})^{2}$.
There exists a global isometric embedding $f:\mathbb{R}^{n}\longrightarrow\mathbb{R}^{1,n}$
into Minkowski space $(\mathbb{R}^{1,n},\eta)$.
\end{prop}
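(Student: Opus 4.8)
The plan is to exploit a feature absent from the general $\mathcal{H}$--global situation of Theorem~\ref{thm:H-Global Minkowski}: here the spatial part of $\tilde{g}$ is already the flat Euclidean metric $\sum_{i=1}^{n-1}(dx^i)^2$. Consequently no Nash embedding of the spatial slices is required---I would embed them by the identity and spend the single extra target dimension purely on resolving the degenerate temporal term $-t\,(dt)^2$. Concretely, I would seek a map of the product form
\[
f(t,x^1,\dots,x^{n-1}) = \bigl(a(t),\,b(t),\,x^1,\dots,x^{n-1}\bigr)\in\mathbb{R}^{1,n},
\]
where $a$ is the (single) timelike coordinate and $b,x^1,\dots,x^{n-1}$ are the $n$ spacelike coordinates, with $a,b\colon\mathbb{R}\to\mathbb{R}$ smooth functions to be determined.

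Pulling back $\eta = -(dy^0)^2 + \sum_{\alpha=1}^{n}(dy^\alpha)^2$ along this ansatz gives $f^{*}\eta = \bigl(-(a'(t))^2 + (b'(t))^2\bigr)(dt)^2 + \sum_{i=1}^{n-1}(dx^i)^2$, so the isometry condition $f^{*}\eta = \tilde{g}$ collapses to the single scalar identity
\[
(a'(t))^2 - (b'(t))^2 = t \qquad\text{for all } t\in\mathbb{R}.
\]
I expect the global solvability of this identity to be the main obstacle. The naive one-coordinate recipe of Theorem~\ref{thm:H-Global Minkowski}, namely $(f')^2 = 1+t$, yields a real smooth function only on $\{t>-1\}$ and hence cannot be extended to all of $\mathbb{R}$; this is precisely why a genuinely \emph{global} embedding needs the additional spacelike slot $b$. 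The device that removes the obstruction is the factorization $(a'+b')(a'-b') = t$: imposing $a'(t)+b'(t) = 1$ and $a'(t)-b'(t) = t$ decouples the problem linearly and gives $a'(t) = \tfrac12(1+t)$, $b'(t) = \tfrac12(1-t)$, whose primitives $a(t) = \tfrac{t}{2}+\tfrac{t^2}{4}$ and $b(t) = \tfrac{t}{2}-\tfrac{t^2}{4}$ are polynomials---hence globally smooth---and satisfy the required identity by a one-line check.

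With an explicit $f$ fixed, the remaining work is verifying the three embedding properties. For immersivity I would compute the Jacobian: the columns $\partial_{x^i}f$ are the constant standard basis vectors in the spatial slots, while $\partial_t f = (a',b',0,\dots,0)$ occupies only the first two slots and is nonzero since $a'$ and $b'$ vanish solely at $t=-1$ and $t=1$ respectively and never simultaneously; thus $Df$ has full rank $n$ everywhere. For injectivity and the homeomorphism-onto-image property, the key observation is that $a(t)+b(t) = t$ is linear, so $t$ (and each $x^i$ directly) is recovered as a smooth function of the image point; this simultaneously proves injectivity, exhibits a smooth inverse on the image, and shows $f$ is proper, so its image is a closed embedded submanifold. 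Combining the isometry, immersion, and embedding facts completes the argument, and I would close by stressing that the construction is fully global precisely because the flatness of the spatial factor eliminated any need for local Nash-type patching.
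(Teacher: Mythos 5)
Your proof is correct, and it takes a genuinely different route from the paper's. The paper makes the same initial reduction (the flat spatial factor is carried along unchanged, and everything collapses to embedding the two-dimensional block $-t\,(dt)^2+(dx)^2$ via a curve satisfying $-(\vartheta')^2+(\xi')^2=-t$), but it then solves this condition implicitly: it prescribes the \emph{image} of the curve to be the hyperbola obtained by rotating $\vartheta^2-\xi^2=1$ by $45$ degrees and translating it through the origin, which yields the parametrization only through the integral relation $\int_0^{\vartheta}\sqrt{\left|\tfrac{4}{(\sqrt{2}-2\tilde\vartheta)^4}-1\right|}\,d\tilde\vartheta=\tfrac{2}{3}|t|^{3/2}\,\mathrm{sgn}(t)$, supplemented by asymptotic expansions near $\vartheta=0$ and for $\vartheta\ll-1$. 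Your null factorization $(a'+b')(a'-b')=t$ with the normalization $a'+b'\equiv 1$ instead produces fully explicit polynomial components $a(t)=\tfrac{t}{2}+\tfrac{t^2}{4}$, $b(t)=\tfrac{t}{2}-\tfrac{t^2}{4}$ (the image curve is a parabola rather than a hyperbola), and your verification is more complete on the embedding side: you check immersivity from the non-simultaneous vanishing of $a'$ and $b'$, and you get injectivity, a smooth inverse, and properness all at once from the linear relation $a(t)+b(t)=t$ --- points the paper's proof leaves implicit. What the paper's hyperbola ansatz buys in exchange is the infinite family of solutions (translations along $\vartheta+\xi=0$) that it exploits afterwards to place the image inside the Misner region $\mathcal{R}$ while avoiding the origin; note that your curve satisfies $y^1-\tau=b(t)-a(t)=-\tfrac{t^2}{2}\le 0$, touching the null boundary at $t=0$, so it would need a reflection or null translation before being fed into the Misner quotient construction --- but that is irrelevant to the present proposition, which concerns only the Minkowski target.
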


\begin{proof}
Our goal is to find a global mapping $f:\mathbb{R}^{n}\longrightarrow\mathbb{R}^{1,n}$
that constitutes an isometric embedding of $(\mathbb{R}^{n},\tilde{g})$
into the flat target space $(\mathbb{R}^{1,n},\eta)$. This means
we must find a set of coordinate functions for the embedding, say
\[
f(t,\hat{x})=(\vartheta(t,\hat{x}),\xi(t,\hat{x}),Y^{2}(t,\hat{x}),\ldots,Y^{n-1}(t,\hat{x})),
\]
such that the pullback of the Minkowski metric $\eta$ matches the
source metric $\tilde{g}$, i.e. $f^{*}\eta=\tilde{g}$. The metrics
are given by: 

\[
f^{*}\eta=-(d\vartheta)^{2}+(d\xi)^{2}+\sum_{i=2}^{n-1}(dY^{i})^{2},
\]

\[
\tilde{g}=-t(dt)^{2}+\sum_{j=1}^{n-1}(dx^{j})^{2}.
\]
We can simplify the problem by noting that the metrics are of a product
form (and can be separated into two parts). Let's look for an embedding
of the form: 
\[
\vartheta=\vartheta(t,x^{1}),\quad\xi=\xi(t,x^{1}),\quad Y^{i}=Y^{i}(t,x^{1})\quad\text{for }i=2,\ldots,n-1.
\]
The conditions on the embedding functions for the $Y^{i}$ coordinates
are trivially satisfied. We can therefore focus on finding an embedding
for the $2$--dimensional submanifold with metric $^{(2)}\tilde{g}=-t(dt)^{2}+(dx^{1})^{2}$
into a $3$--dimensional flat space with metric $^{(3)}\eta=-(d\vartheta)^{2}+(d\xi)^{2}+(dZ)^{2}$.
We can then extend this embedding trivially to the higher-dimensional
spaces. Setting $x^{1}=x$, this implies

~

\[
-t=-(\frac{\partial\vartheta(t,x)}{\partial t})^{2}+(\frac{\partial\xi(t,x)}{\partial t})^{2}+(\frac{\partial x(t,x)}{\partial t})^{2},
\]

\[
1=-(\frac{\partial\vartheta(t,x)}{\partial x})^{2}+(\frac{\partial\xi(t,x)}{\partial x})^{2}+(\frac{\partial x(t,x)}{\partial x})^{2},
\]

\[
0=-(\frac{\partial\vartheta(t,x)}{\partial t}\frac{\partial\vartheta(t,x)}{\partial x})+(\frac{\partial\xi(t,x)}{\partial t}\frac{\partial\xi(t,x)}{\partial x})+(\frac{\partial x(t,x)}{\partial t}\frac{\partial x(t,x)}{\partial x}).
\]
\textbf{~}\\ One can imagine the $x$--axis poking out of the $\vartheta$--$\xi$
plane, providing a tangible visualization of the geometric relationships.
Thus, without loss of generality, we only need to consider

\begin{equation}
-t=-(\frac{d\vartheta(t,x_{0})}{dt})^{2}+(\frac{d\xi(t,x_{0})}{dt})^{2},\label{eq:isometric embedding}
\end{equation}
where $x$ is fixed, reducing the embedded $2$--dimensional model
to a curve parametrized by $t$. It is reasonable to choose the initial
values of $\vartheta=\xi=0$ for $t=0$, such that we have $(0\leq\frac{d\vartheta}{dt})\wedge(0\leq\frac{d\xi}{dt})\ \forall t$.
The first requirement ensures, nota bene, that the hypersurface of
signature change goes through the origin. 

\textbf{~}\\ A promising ansatz for solving this underdetermined
system of equations leverages the geometric properties of Minkowski
space. We note that the locus of signature change at $t=0$ must correspond
to a null vector, which we assume is located at the origin of the
target space, i.e., $\vartheta(t=0)=\xi(t=0)=0$, i.e. $\vartheta=\xi=0$.
Furthermore, the segment of the embedded curve for $t<0$ must be
timelike, meaning it lies inside the light cone. Conversely, the segment
for $t>0$ must be spacelike, lying outside the light cone. These
geometric constraints naturally suggest an ansatz based on a hyperbola
$\vartheta^{2}-\xi^{2}=1$, which lies outside the light cone. We
then rotate it clockwise by $45$ degrees and shift it in such a way
that it passes through the origin (Figure~\ref{fig:hyperbola-embedding idea}).
This procedure yields 

\textbf{~}\\ $((\xi+(1/\text{\textsurd}2))\cos(\pi/4)-(\vartheta-(1/\text{\textsurd}2))\sin(\pi/4))^{2}-((\vartheta-(1/\text{\textsurd}2))\sin(\pi/4)+(\xi+(1/\text{\textsurd}2))\cos(\pi/4))^{2}$

~

$=\frac{1}{2}(2\xi+\sqrt{2})(\sqrt{2}-2\vartheta)=1$

\ 

$\Longleftrightarrow\xi=\frac{\sqrt{2}\vartheta}{\sqrt{2}-2\vartheta}$

~

$\Longleftrightarrow\vartheta=\frac{\sqrt{2}\xi}{2\xi+\sqrt{2}}$.

\textbf{~}\\ Plugging $\frac{d\xi}{d\vartheta}=\frac{2}{(\sqrt{2}-2\vartheta)^{2}}$
into

\[
-t=-(\frac{d\vartheta(t,x_{0})}{dt})^{2}+(\frac{d\xi(t,x_{0})}{dt})^{2}=\left(-1+(\frac{d\xi(t,x_{0})}{d\vartheta(t,x_{0})})^{2}\right)\left(\frac{d\vartheta(t,x_{0})}{dt}\right)^{2}
\]
 gives 

\[
\left(\frac{4}{(\sqrt{2}-2\vartheta)^{4}}-1\right)(\frac{d\vartheta}{dt})^{2}=-t\Leftrightarrow\left(\frac{4}{(\sqrt{2}-2\vartheta)^{4}}-1\right)(d\vartheta)^{2}=-t(dt)^{2}.
\]

~

\begin{figure}[H]
\centering{}\includegraphics[scale=1.11]{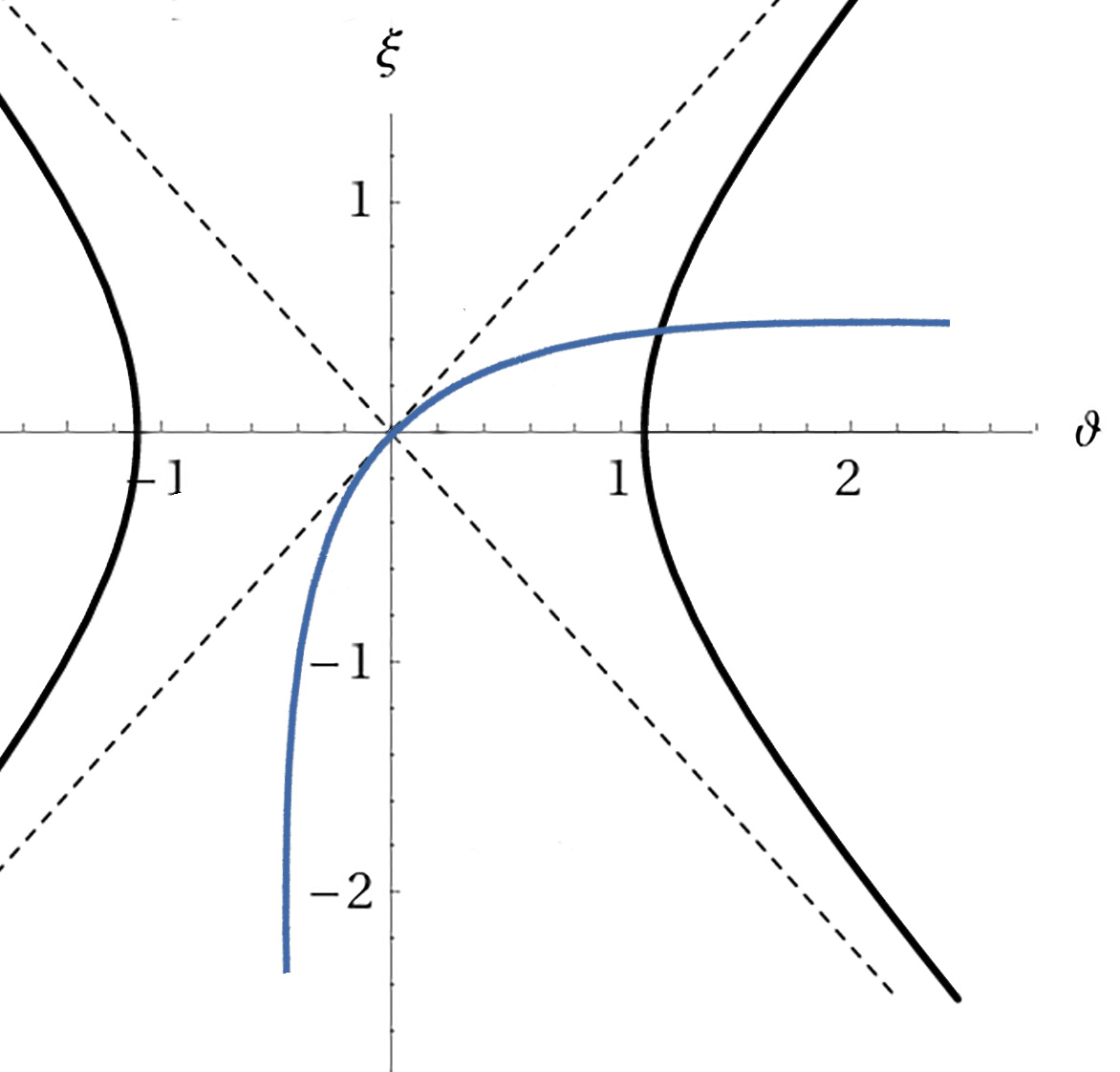}\caption{{\small{}\label{fig:hyperbola-embedding idea}The hyperbola $\frac{1}{2}(2\xi+\sqrt{2})(\sqrt{2}-2\vartheta)=1$
obtained from $\vartheta^{2}-\xi^{2}=1$ by rotating it by $45$ degrees
in clockwise direction and shift it such that it goes through the
origin. It should also be noted that the calculations are motivated
by a symmetry. The curve corresponding to the chosen relationship
between $\vartheta$ and $\xi$ is invariant under a reflection about
the line $\vartheta+\xi=0$. The possibility of freely specifying
a relationship between $\vartheta$ and $\xi$ arises from the underdetermination
of Equation~\ref{eq:isometric embedding}.}}
\end{figure}
\textbf{~}\\ Note that we have 

\textbf{~}

$\left(\frac{4}{(\sqrt{2}-2\vartheta)^{4}}-1\right)\geq0$ if $t\leq0$,
hence $4\geq(\sqrt{2}-2\vartheta)^{4}$,

$\left(\frac{4}{(\sqrt{2}-2\vartheta)^{4}}-1\right)\leq0$ if $t\geq0$,
hence $4\geq(\sqrt{2}-2\vartheta)^{4}$, 

$4=(\sqrt{2}-2\vartheta)^{4}$ if $t=0$, hence $(\vartheta=0)\vee(\vartheta=\sqrt{2})$. 

\textbf{~}\\ If we take into account the symmetry of the graph $\frac{4}{(\sqrt{2}-2\vartheta)^{4}}-1$
with respect to $\vartheta=\frac{1}{\sqrt{2}}$, we can consider the
absolute value of the equation

\begin{equation}
\left|\frac{4}{(\sqrt{2}-2\vartheta)^{4}}-1\right|(d\vartheta)^{2}=\left|t\right|(dt)^{2}.\label{eq:-1}
\end{equation}
Taking the square root of both sides of Equation~\ref{eq:-1} yields

~

\[
\sqrt{\left|\frac{4}{(\sqrt{2}-2\vartheta)^{4}}-1\right|}d\vartheta=\sqrt{\left|t\right|}dt
\]

\begin{equation}
\Longleftrightarrow\intop_{0}^{\vartheta}\sqrt{\left|\frac{4}{(\sqrt{2}-2\tilde{\vartheta})^{4}}-1\right|}d\tilde{\vartheta}=\intop_{0}^{t}\sqrt{\left|\tilde{t}\right|}d\tilde{t}=\frac{2}{3}\sqrt{\left|t\right|}^{3}\textrm{sgn}(t).\label{eq:explicit isometric embedding}
\end{equation}
This integral represents an exact, though implicit, solution for the. 
embedding functions. The equation establishes a relationship between
the embedding coordinate $\vartheta$ and the original coordinate
$t$ through a nonlinear transformation. This is a direct consequence
of the non-trivial nature of the source metric, which requires a non-trivial
map to the flat target space. To analyze the behavior of this solution,
we first note that 
\[
\underset{\vartheta\rightarrow\frac{1}{\sqrt{2}}}{\lim}\intop_{0}^{\vartheta}\sqrt{\left|\frac{4}{(\sqrt{2}-2\tilde{\vartheta})^{4}}-1\right|}d\tilde{\vartheta}=\infty.
\]
Then based on the series expansion at $\vartheta=0$, we have the
following approximations:

\textbf{~}\\ For $-1\ll\theta\ll\frac{1}{\sqrt{2}}$:

\[
\intop_{0}^{\vartheta}\sqrt{\left|\frac{4}{(\sqrt{2}-2\tilde{\vartheta})^{4}}-1\right|}d\tilde{\vartheta}\approx\intop_{0}^{\vartheta}\sqrt{\left|4\sqrt{2}\tilde{\vartheta}\right|}d\tilde{\vartheta}=2\sqrt[4]{2}\left(\intop_{0}^{\vartheta}\sqrt{\left|\tilde{\vartheta}\right|}d\tilde{\vartheta}\right)
\]

\[
=2\sqrt[4]{2}\left(\frac{2}{3}\sqrt{\left|\vartheta\right|}^{3}\textrm{sgn}(\vartheta)\right)=\frac{4\sqrt[4]{2}}{3}\sqrt{\left|\vartheta\right|}^{3}\textrm{sgn}(\vartheta)
\]

\[
\Longrightarrow\frac{4\sqrt[4]{2}}{3}\sqrt{\left|\vartheta\right|}^{3}\textrm{sgn}(\vartheta)\approx\frac{2}{3}\sqrt{\left|t\right|}^{3}\textrm{sgn}(t)
\]

\[
\Longleftrightarrow2\sqrt[4]{2}\sqrt{\left|\vartheta\right|}^{3}\textrm{sgn}(\vartheta)\approx\sqrt{\left|t\right|}^{3}\textrm{sgn}(t)
\]

\[
\Longrightarrow\vartheta\approx\frac{t}{2^{\frac{5}{6}}}.
\]
For $\vartheta\ll-1$
\[
\intop_{0}^{\vartheta}\underset{\approx1}{\underbrace{\sqrt{\left|\frac{4}{(\sqrt{2}-2\tilde{\vartheta})^{4}}-1\right|}}}d\tilde{\vartheta}\approx\vartheta.
\]
\end{proof}
~

The integral Equation~\ref{eq:explicit isometric embedding} is crucial
because it encodes the transformation from the source metric $^{(2)}\tilde{g}=-t(dt)^{2}+(dx)^{2}$
to the flat metric $^{(3)}\eta=-(d\vartheta)^{2}+(d\xi)^{2}+(dZ)^{2}$.
It represents the relationship between the original coordinates $(t,x)$
and the new embedding coordinates $(\vartheta,\xi,Z)$ that defines
the isometric embedding.

\begin{figure}[H]
\centering{}\includegraphics[scale=1.11]{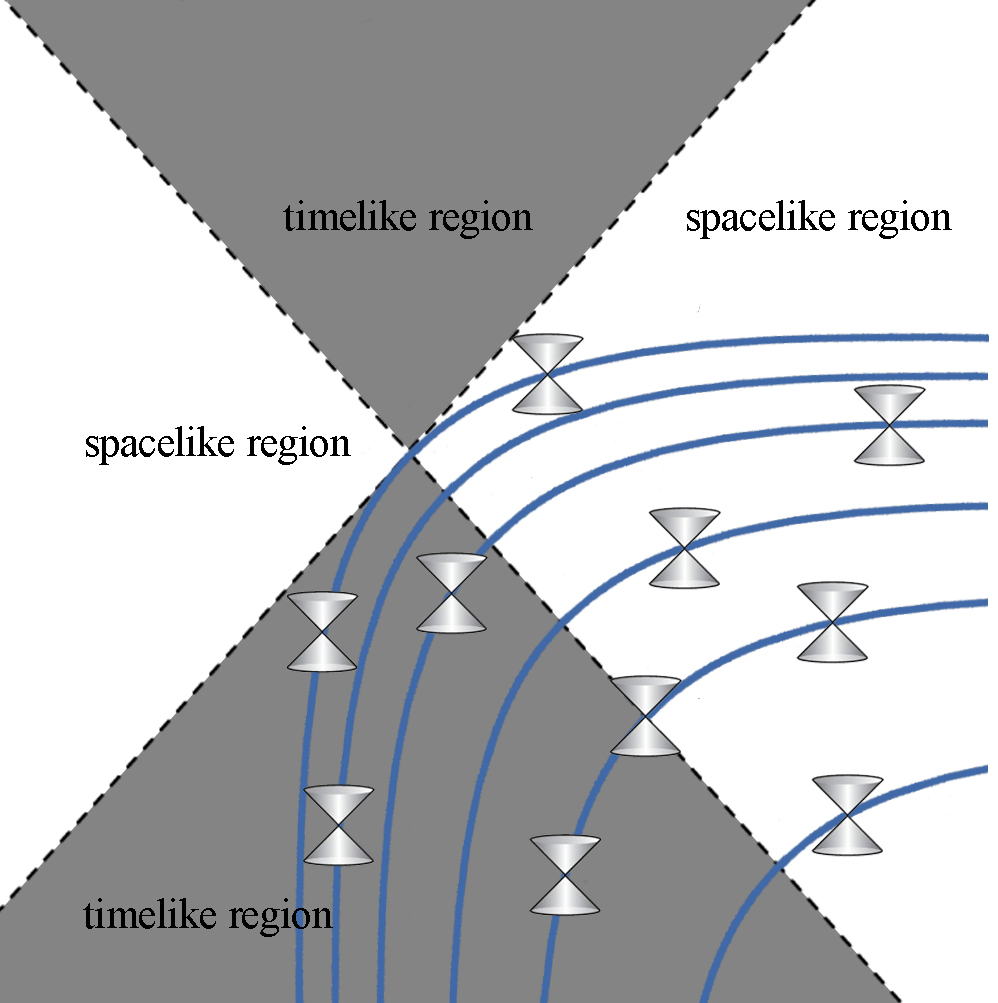}\caption{{\small{}\label{fig:embedding-family}A selection of curves from the
infinite family of isometric embeddings obtained by translating the
rotated and shifted hyperbola $\frac{1}{2}(2\xi+\sqrt{2})(\sqrt{2}-2\vartheta)=1$
along the line $\vartheta+\xi=0$, without further rotation. Each
curve represents a valid solution to the embedding condition described
in Equation}~\ref{eq:isometric embedding}.}
\end{figure}

It should be stressed that the hyperbola $\frac{1}{2}(2\xi+\sqrt{2})(\sqrt{2}-2\vartheta)=1$,
which is obtained from the equation $\vartheta^{2}-\xi^{2}=1$ by
a $45$-degree cockwise rotation and a specific translation, represents
just one member of an infinite family of solutions for this isometric
embeddingg (see Equation~\ref{eq:explicit isometric embedding}).
This derivation is motivated by symmetry: the curve defined by the
chosen relationship between $\vartheta$ and $\xi$ is invariant under
reflection about the line $\vartheta+\xi=0$. By translating it along
this line, without any additional rotation, we obtain all other members
of this solution family. A few such solutions are illustrated in Figure~\ref{fig:embedding-family}.

\subsection{Global Isometric Embedding into Misner Space}

The purpose of this section is to represent a signature-type changing
canonical model as a submanifold within Misner space via the process
of a global isometric embedding. Drawing on the results from Section~\ref{subsec:Global-isometric-embedding Minkowski},
we will demonstrate how the $n$--dimensional source manifold can
be isometrically embedded into the $(n+1)$--dimensional Misner space,
as stated in Proposition~\ref{prop:Toy Model Misner}:
\begin{prop*}
Let $(\mathbb{R}^{n},\tilde{g})$ be the $n$--dimensional signature-type
changing toy model manifold with the metric $\tilde{g}=-t(dt)^{2}+\sum_{i=1}^{n-1}(dx^{i})^{2}$.
Then there exists a global isometric embedding of the manifold $(\mathbb{R}^{n},\tilde{g})$
into $(n+1)$--dimensional Misner space $\mathcal{M}_{\text{Misner}}$.
\end{prop*}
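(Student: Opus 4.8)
The plan is to realize the desired map as the composition $\pi \circ f$, where $f \colon \mathbb{R}^n \to \mathbb{R}^{1,n}$ is the explicit global isometric embedding furnished by Proposition~\ref{prop: Toy Model Minkowski embedding} and $\pi\colon \mathcal{R} \to \mathcal{M}_{\mathrm{Misner}}$ is the quotient map onto $(n+1)$--dimensional Misner space. Writing $f(t,\hat{x}) = (\vartheta(t),\xi(t),x^1,\dots,x^{n-1})$, I identify the boost plane with the $(\vartheta,\xi)$--coordinates (so $\tau = \vartheta$ and $y^1 = \xi$) and the remaining $x^i$ with the spectator directions that $\Gamma$ leaves fixed. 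The isometric--immersion half of the statement then costs nothing new: by functoriality of the pullback (Lemma~\ref{lem:Pullback-Property}), together with $\pi^{*}g_{\mathrm{Misner}} = \eta$ and $f^{*}\eta = \tilde g$, one obtains $(\pi\circ f)^{*}g_{\mathrm{Misner}} = f^{*}(\pi^{*}g_{\mathrm{Misner}}) = f^{*}\eta = \tilde g$, exactly as in Proposition~\ref{prop:misner_immersion}, and $\pi\circ f$ is an immersion as the composite of an immersion with a local diffeomorphism.

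The first substantive issue is to place $\mathrm{Im}(f)$ inside the covering region $\mathcal{R} = \{\xi - \vartheta > 0\}$ while avoiding the orbifold fixed point at the origin. A direct computation for the canonical curve gives $\xi - \vartheta = 2\vartheta^2/(\sqrt2 - 2\vartheta) \ge 0$ on $\vartheta \in (-\infty, 1/\sqrt2)$, with equality precisely at $\vartheta = 0$ (that is, $t = 0$); thus the untranslated curve meets the excluded fixed point. I would therefore select the representative of the solution family obtained by translating along the null line $\vartheta + \xi = 0$, namely $(\vartheta,\xi)\mapsto(\vartheta + s, \xi - s)$ with $s < 0$. Such a translation leaves all tangent vectors---and hence the isometry condition $f^{*}\eta = \tilde g$---unchanged, while shifting $\xi - \vartheta$ to $(\xi - \vartheta) + 2|s| > 0$, so that the entire image lies strictly inside $\mathcal{R}$ and clear of the fixed point.

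It remains to establish injectivity and the homeomorphism--onto--image property. For injectivity I would use that, at fixed spectator coordinates, each $\Gamma$--orbit in $\mathcal{R}$ is contained in a level set of the Misner time $T = \tfrac14(\bar\xi^2 - \bar\vartheta^2)$; hence two image points on a common orbit must share the same $T$, and it suffices to show $t \mapsto T(t)$ is strictly monotone along the embedded curve. A short computation shows that, for the untranslated curve, $dT/d\vartheta = 2\vartheta^2(2\vartheta^2 - 3\sqrt2\,\vartheta + 3)/(\sqrt2 - 2\vartheta)^3$, in which the quadratic factor has negative discriminant and is therefore positive, so $T$ is strictly increasing in $\vartheta$; since $\vartheta(t)$ is itself strictly increasing and the null translation contributes only the additional positive term $-\tfrac{s}{2}(\xi' + \vartheta')$, the composite $T(t)$ is strictly increasing. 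Thus $\pi\circ f$ is injective. The homeomorphism--onto--image claim then follows verbatim from the openness argument in the proof of Theorem~\ref{thm:H-Global Misner}: $f$ is an embedding and $\pi$ is an open quotient map, so $\pi\circ f$ is an open, injective, continuous map onto its image, hence a homeomorphism onto it, and therefore an isometric embedding.

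The main obstacle is the injectivity step---equivalently, the statement that the embedded curve meets each boost orbit at most once. This is precisely where the explicit geometry of the toy model must be exploited: one must check that the Misner time is genuinely monotone (the positivity of the quadratic factor above), and one must first dispose of the fact that the canonical curve runs through the orbifold fixed point, which is what forces the preliminary null translation. By contrast, the immersion and topological parts are routine, following from functoriality of pullbacks and the general arguments already in place for the $\mathcal{H}$--global case.
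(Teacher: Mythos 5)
Your proposal is correct, but the injectivity mechanism is genuinely different from the paper's. The paper's proof of Proposition~\ref{prop:Toy Model Misner} delegates to the $\mathcal{H}$--global machinery: it takes the embedding of Theorem~\ref{thm:H-Global Minkowski} with the Nash map equal to the identity, i.e.\ $\psi(t,\mathbf{\hat{x}})=\left(-\tfrac{2}{3}(1+t)^{3/2},t,x^{1},\ldots,x^{n-1}\right)$, and then invokes Theorem~\ref{thm:H-Global Misner} together with the transversality Proposition~\ref{prop:embedding transverse to the orbital foliation}, Lemma~\ref{lem:orbit-tangency-H-global} and Corollary~\ref{cor:injectivity-by-lemma-orbit-tangency}; the Appendix supplies the coordinate transversality check (no real roots of $2t^{2}+t+2=0$). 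You instead work throughout with the hyperbola embedding $f$ of Proposition~\ref{prop: Toy Model Minkowski embedding} and prove injectivity by exhibiting the boost-invariant Misner time $T=\tfrac{1}{4}\left(\xi^{2}-\vartheta^{2}\right)$ as a strictly monotone function of $t$ along the curve; your derivative formula and the negative discriminant of $2\vartheta^{2}-3\sqrt{2}\,\vartheta+3$ check out, so this first-integral argument cleanly replaces the paper's transversality/tangency dichotomy. Your route buys two concrete advantages. First, the temporal function $-\tfrac{2}{3}(1+t)^{3/2}$ in the paper's $\psi$ is only real for $t\geq-1$, and $y^{1}-\tau=t+\tfrac{2}{3}(1+t)^{3/2}$ turns negative as $t\to-1$, so the paper's appendix computation cannot by itself certify a \emph{global} embedding of all of $\mathbb{R}^{n}$ into $\mathcal{R}$; your $f$ is defined for every $t\in\mathbb{R}$, which matches the actual claim being proved. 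Second, you handle inside the proof the fact that the untranslated curve hits the orbifold fixed point, via $\xi-\vartheta=2\vartheta^{2}/(\sqrt{2}-2\vartheta)$ vanishing exactly at $\vartheta=0$ and the null translation with $s<0$; the paper defers this to the remark following its proof, and your sign convention is the internally consistent one, since the portion of the null line $\vartheta+\xi=0$ lying in $\mathcal{R}=\{\xi-\vartheta>0\}$ has $\xi>0$ (the remark's condition $\xi<0$ appears to be a slip). The immersion and homeomorphism-onto-image portions of your argument coincide with Proposition~\ref{prop:misner_immersion} and Step II of the proof of Theorem~\ref{thm:H-Global Misner}, as you note. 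One cosmetic gap: when concluding injectivity from equal $T$ and equal spectator coordinates you should also record that $\vartheta(t)$ is strictly increasing globally (immediate from the integral relation in Equation~\ref{eq:explicit isometric embedding}, whose integrand is positive off a single point), and that equal $(t,\mathbf{\hat{x}})$ forces the identifying boost to be trivial because the action of $\Gamma$ on $\mathcal{R}$ is free --- both are one-line additions.
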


\begin{proof}
The proof proceeds by demonstrating that the manifold $(\mathbb{R}^{n},\tilde{g})$
satisfies the conditions for the existence of a global isometric embedding
into Misner space, as established by our main theorems. The metric
$\tilde{g}$ is defined globally on all of $\mathbb{R}^{n}$ and is a specific instance of the canonical form described in Definition~\ref{def:Radical-Adapted-Gauss-Like-Coordinates},
with spatial components $g_{ij}=\delta_{ij}$.
The metric component $g_{tt}=-t=-\mathfrak{h}_{q}(t,\mathbf{\hat{x}})$ determines the boundary between the Lorentzian region $(t>0)$ and the Riemannian
region $(t<0)$ and corresponds to the absolue time function.  

The existence of a global isometric embedding into Minkowski space follows from Theorem~\ref{subsec:Global-isometric-embedding Minkowski}.
Since the  $\mathcal{H}$--global condition (Definition~\ref{def:H-Global})
holds on all of $\mathbb{R}^{n}$ (i.e., $U=\mathbb{R}^{n})$, Theorem~\ref{thm:H-Global Minkowski}, guarantees that an  $\mathcal{H}$--global isometric embedding $\psi$ into an ambient Minkowski space $(\mathbb{R}^{1,N_{h}},\eta)$ exists. In this case, the required Riemannian metric $h$ is given by $h=(dt)^{2}+\sum_{i=1}^{n-1}(dx^{i})^{2}$, 
so $(\mathbb{R}^{n},h)$ is flat Euclidean space. Consequently, the Nash embedding $\Phi : (\mathbb{R}^{n},h) \longrightarrow \mathbb{R}^{n}$ in Theorem~\ref{thm:H-Global Minkowski} is simply the identity map, 
$\Phi(t,\mathbf{\hat{x}})=(t,x^{1},\ldots,x^{n-1})$.


~

The image of the embedding, as given by Equation~\ref{eq:explicit isometric embedding}, is contained in the region $\mathcal{R}\subset\mathbb{R}^{1,n}$
where a Misner space can be constructed. By Theorem~\ref{thm:H-Global Misner},
which ensures the existence of an $\mathcal{H}$--global isometric
embedding into Misner space for such manifolds, and using  
Lemma~\ref{lem:orbit-tangency-H-global} and Corollary~\ref{cor:injectivity-by-lemma-orbit-tangency}, the composition of $\psi$ with the quotient map $\pi$ to Misner space, namely $\pi\circ\psi$,
defines a global isometric embedding of $(\mathbb{R}^{n},\tilde{g})$ into $\mathcal{M}_{\text{Misner}}$.
Since all conditions of the relevant theorems hold globally on $\mathbb{R}^{n}$,
the embedding is indeed global. For the full details of the
transversality argument, which are supplementary to this proof, see
the Appendix.
\end{proof}

\begin{figure}[H]
\centering{}\includegraphics[scale=1.11]{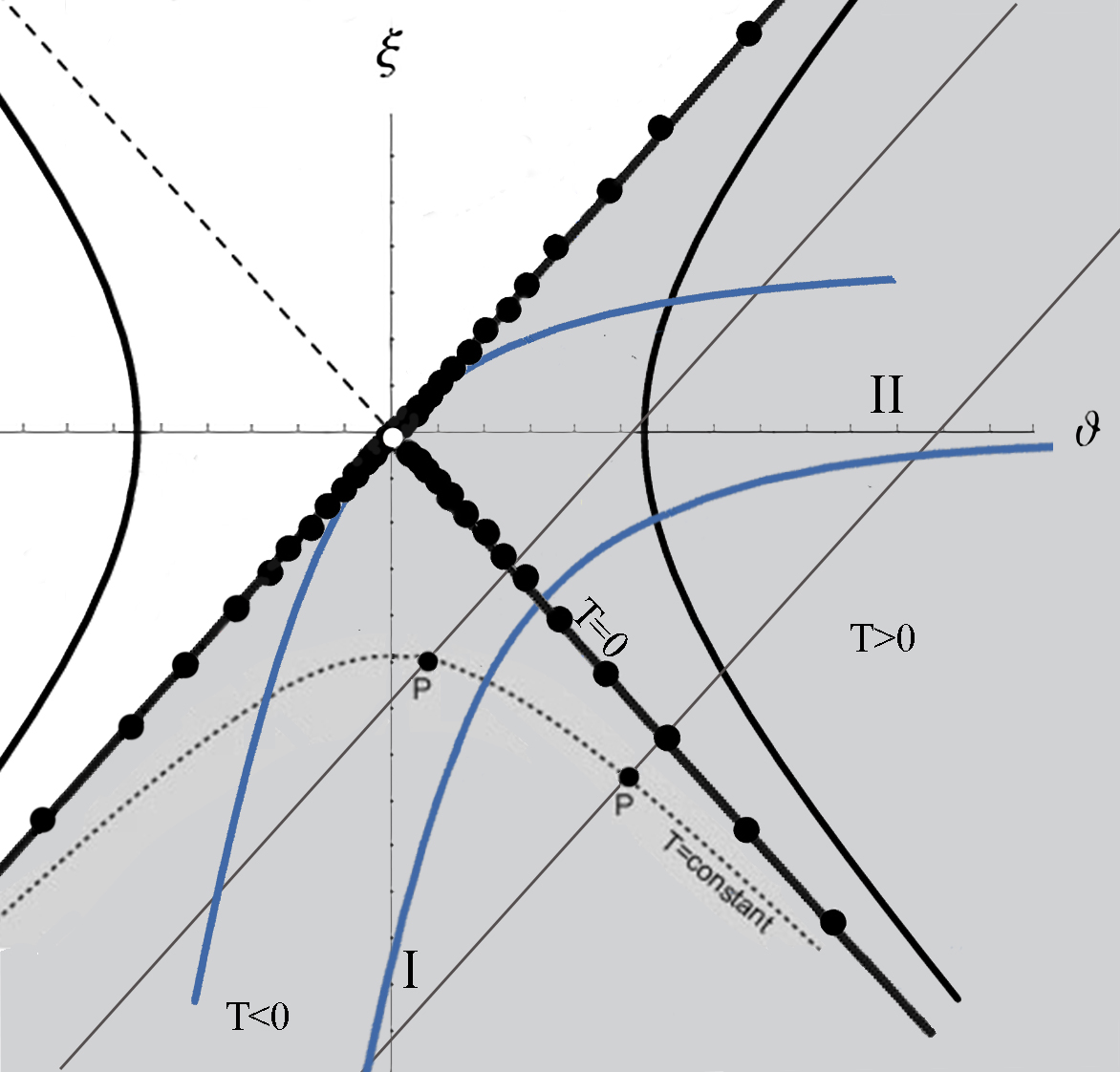}\caption{{\small{}\label{fig:Misner-Embedding}The $2$--dimensional $\vartheta$--$\xi$
plane, with the protruding $x^{i}$--axes suppressed, provides a
tangible visualization of the geometric relationships in two dimensions.
One hyperbola, given by $\frac{1}{2}(2\xi+\sqrt{2})(\sqrt{2}-2\vartheta)=1$
passes through the origin, while another hyperbola from the same family---translated
along the line $\vartheta+\xi=0$, represent the embedded $(\mathbb{R}^{n},\tilde{g})$
within Minkowski space. The shaded region $\mathcal{R}=I+II$ serves
as the covering space for Misner space, containing a representative
event $p$ that has an infinite number of copies. The high-order copies
of the event lying along the line $\vartheta+\xi=0$ (corresponding
to $T=0$) asymptotically approach the left chronology horizon.}}
\end{figure}

\begin{rem}
One should observe that in the covering space of Misner space, the
images of events under the group action lying along the null lines
$-\vartheta+\xi=0$ and $\vartheta+\xi=0$ asymptotically approach
the right and left chronology horizons, respectively, see Figure~\ref{fig:Misner-Embedding}.
These lines correspond to the future and past horizons. Furthermore,
there is only a single copy of the origin in the covering space, which
exists at the intersection of these lines but is excluded from the
Misner manifold itself. 

As a result, we must restrict our family of solutions for the isometric
embedding to those whose image does not pass through the origin, i.e.,
we restrict ourselves to those passing through the region $\{\vartheta+\xi=0,\xi<0\}$.
Specifically, this requires choosing a representative of the family
of solutions for the isometric embedding function $f$ (as defined
in Section~\ref{subsec:Global-isometric-embedding Minkowski}), such
that the image of the embedding lies entirely within the region defined
by $\mathcal{R}$. We will use the function $f$ to denote a representative
of this family of solutions that satisfies this constraint.

Under this restriction, the composition $\pi\circ f$ is a well-defined
map. It preserves the geometry of $\mathbb{R}^{n}$ because $f$ is
an isometry, while the projection $\pi$ acts by isometries within
$(\mathbb{R}^{1,n-1},\eta)$ to apply a hyperbolic rotation, transforming
the embedded spacetime coordinates accordingly.
\end{rem}

\section{Conclusion and Interpretation: A Braneworld and Kaluza-Klein Perspective}

The class of signature-type changing manifolds under consideration
includes physically significant scenarios, such as the Hartle-Hawking
\textquotedblleft no boundary\textquotedblright{} proposal, and offers
a coherent geometric approach to questions of signature change and
braneworld cosmologies. A key result of our analysis is the proof
of an isometric embedding of these manifolds into higher-dimensional
pseudo-Euclidean spaces, most notably Minkowski space. This embedding
reframes the investigation of signature-type change as a problem in
submanifold geometry, rendering it mathematically well-posed and free
of the usual pathologies.

~

The isometric embedding into Misner space after compactification is
particularly significant. It demonstrates how signature-type changing
spacetimes can be consistently realized within a higher-dimensional
Lorentzian setting. This provides an intriguing generalization of
the Kaluza-Klein paradigm. A central finding is that the $n$--dimensional
quotient manifold can exhibit signature-type change, even while the
higher-dimensional Misner metric remains Lorentzian throughout. This
novel situation can be described as \textquotedblleft signature change
without signature change\textquotedblright , as the signature change
is an effective property of the lower-dimensional manifold.

~

Our approach reframes the problem within the braneworld paradigm. Instead of treating a signature-changing universe as an isolated object, we regard it as a hypersurface, or brane, embedded in a higher-dimensional bulk spacetime. The key insight is that the geometry of our universe is induced by its embedding in this higher-dimensional space. From the intrinsic perspective, the geometry of a signature-changing manifold appears singular, whereas from the bulk perspective it may be entirely regular. This shift of viewpoint allows us to use the well-behaved geometry of the bulk to understand the more intricate, signature-changing geometry of the brane.

~

These results provide a new tool for theoretical model building in
both cosmology and higher-dimensional theories of gravity. By aligning
the mathematical idea of embedding spacetimes in higher dimensions
with constraints from brane scenarios, this work offers a deeper insight
into the nature of gravity by connecting the geometric structure of
spacetime with observable physical phenomena.

\begin{acknowledgement*}
This work was partially carried out while the author was a member
of Richard Schoen's research group at the University of California,
Irvine. The author gratefully acknowledges support from the Simons Center for Geometry and Physics, Stony Brook University, where part of this research was conducted, and partial support from the SNF Grant
No. 200021-227719. The author would also like to thank Leo Mathis, 
Iakovos Androulidakis, Federico Franceschini and Fabian Ziltener for insightful discussions
that helped shape the direction of this research. 
\end{acknowledgement*}

~

\appendix
\section*{Appendix}
\paragraph{Proof of Theorem~\ref{thm:H-Global Misner}: Calculation in Misner Coordinates}

~

\textbf{~}

In coordinate representation, the map $\pi$ on the $(\tau,y^{1})$--plane
of Minkowski space is defined by 
\[
\pi(\tau,y^{1})=\left(\frac{(y^{1})^{2}-\tau^{2}}{4},\,-2\log\left(\frac{y^{1}-\tau}{2}\right)\right)=(T,\phi),
\]
 see Equations~\ref{eq:Misner-Minkowski coordinates}. Then the overall
composition 
\[
\pi\circ\psi:\tilde{M}\to\mathcal{M}_{\mathrm{Misner}}
\]
 in $n$ dimensions is given by $\pi\circ\psi(t,\mathbf{\hat{x}})=$

\[
\left(\frac{\big(\Phi^{1}(t,\mathbf{\hat{x}})\big)^{2}-\left(\frac{2}{3}(1+t)\right)^{3}}{4},\;-2\log\left(\frac{\Phi^{1}(t,\mathbf{\hat{x}})+\left(\frac{2}{3}(1+t)\right)^{\frac{3}{2}}}{2}\right),\;\Phi^{2}(t,\mathbf{\hat{x}}),\ldots,\Phi^{N_{h}}(t,\mathbf{\hat{x}})\right).
\]
However, Misner space itself is defined as a quotient space under
the identification 
\[
(T,\phi,\ldots)\sim(T,\phi+2\pi k,\ldots),\quad k\in\mathbb{Z}.
\]
That is, points 
\[
\left(\frac{\big(\Phi^{1}(t,\mathbf{\hat{x}})\big)^{2}-\left(\frac{2}{3}(1+t)\right)^{3}}{4},\;-2\log\left(\frac{\Phi^{1}(t,\mathbf{\hat{x}})+\left(\frac{2}{3}(1+t)\right)^{\frac{3}{2}}}{2}\right),\Phi^{2}(t,\mathbf{\hat{x}}),\ldots\right)
\]
 and 
\[
\left(\frac{\big(\Phi^{1}(t,\mathbf{\hat{x}})\big)^{2}-\left(\frac{2}{3}(1+t)\right)^{3}}{4},\;-2\log\left(\frac{\Phi^{1}(t,\mathbf{\hat{x}})+\left(\frac{2}{3}(1+t)\right)^{\frac{3}{2}}}{2}\right)+2\pi k,\Phi^{2}(t,\mathbf{\hat{x}}),\ldots\right)
\]
 are identified as the same point for each integer $k$. The condition
that 
\[
T=\frac{\big(\Phi^{1}(t,\mathbf{\hat{x}})\big)^{2}-\left(\frac{2}{3}(1+t)\right)^{3}}{4}
\]
is strictly monotonic along the embedded manifold ensures that the
manifold does not \textquotedblleft fold back\textquotedblright {} {}in the time
dimension inside the target space. This monotonicity guarantees a
well-defined embedding in the temporal coordinate $T$.

~\paragraph{Proof of Proposition~\ref{prop:Toy Model Misner}: Supplementary Transversality Calculation}

~

\textbf{~}

This calculation is based on the \textquotedblleft Transversality
Proposition\textquotedblright ~\ref{prop:embedding transverse to the orbital foliation}.
In this model, the embedding's spatial components depend nontrivially
on the source coordinates, and we show that the transversality condition
leads to a contradiction, thereby proving transversality.
The embedding into $3$--dimensional Minkowski space is 
\[
\psi(t,x^{1})=\left(-\frac{2}{3}(1+t)^{\frac{3}{2}},\,t,\,x^{1}\right).
\]
 The coordinates of the target space are $(\tau,y^{1},y^{2})$, with
\[
\tau=-\frac{2}{3}(1+t)^{\frac{3}{2}},\quad y^{1}=t,\quad y^{2}=x^{1}.
\]
The tangent vectors of the embedded manifold are the partial derivatives:
\[
\frac{\partial\psi}{\partial t}=\left(-\sqrt{1+t},\,1,\,0\right),\quad\frac{\partial\psi}{\partial x^{1}}=(0,\,0,\,1).
\]
To check transversality, we consider the tangent vector of the orbit
\[
K=\tau\frac{\partial}{\partial y^{1}}+y^{1}\frac{\partial}{\partial\tau}.
\]
In components, 
\[
K=(y^{1},\tau,0).
\]
We ask if $K$ can be expressed as a linear combination of the tangent
vectors: 
\[
(y^{1},\tau,0)=a(-\sqrt{1+t},1,0)+b(0,0,1)=(-a\sqrt{1+t},a,b).
\]
Equating components yields: 
\[
\begin{cases}
y^{1}=-a\sqrt{1+t},\\
\tau=a,\\
0=b.
\end{cases}
\]
From the second equation, $a=\tau$. Substitute into the first: 
\[
y^{1}=-\tau\sqrt{1+t}.
\]
Using the embedding definitions $y^{1}=t$ and $\tau=-\frac{2}{3}(1+t)^{\frac{3}{2}}$,
we get 
\[
t=\frac{2}{3}(1+t)^{\frac{3}{2}}\sqrt{1+t}=\frac{2}{3}(1+t)^{2},
\]
which simplifies to 
\[
t=\frac{2}{3}(1+2t+t^{2}),
\]
or equivalently, 
\[
2t^{2}+t+2=0.
\]

The discriminant $\Delta=1-16=-15<0$ is negative, so no real solutions
for $t$ exist. Therefore, the assumption that $K$ lies in the tangent
space of the embedded manifold leads to a contradiction for all real
$t$. Hence, the embedding is transversal to the orbits.
\end{document}